\documentclass[12pt]{article}
\usepackage{amsmath}
\usepackage{amssymb}
\usepackage{latexsym,mathtools}
\usepackage{amsthm}
\usepackage{tikz}
\usepackage{tabularx}
\usepackage{booktabs}
\usepackage{mathrsfs}
\usepackage{enumitem}
\usetikzlibrary{calc}
\usepackage{xcolor}
\usepackage{ytableau} 
\usepackage{graphicx}
\usepackage{algorithm,algorithmic}
\usepackage[colorlinks,
linkcolor=red,
anchorcolor=blue,
citecolor=green]{hyperref}

\usepackage{epic}

\newtheorem{thm}{Theorem}[section]
\newtheorem{lem}[thm]{Lemma}
\newtheorem{prop}[thm]{Proposition}
\newtheorem{cor}[thm]{Corollary}
\newtheorem{conj}[thm]{Conjecture}
\newtheorem{rem}[thm]{Remark}

\newtheorem{prob}[thm]{Problem}

\allowdisplaybreaks

\newcommand{\des}{\mathrm{des}}

\newcommand{\PF}{\mathcal{PF}}
\newcommand{\PPF}{\mathcal{PPF}}
\newcommand{\type}{\mathrm{type}}
\newcommand{\PP}{p}
\newcommand{\PPP}{\tilde{p}}

\newcommand{\E}{\mathcal{E}}
\newcommand{\NC}{\mathrm{NC}}

\numberwithin{equation}{section}

\begin{document}

	\begin{center}
		{\large \bf Lascoux series, parking functions and noncrossing partitions}
	\end{center}
    
	\begin{center}
		Alice L.L. Gao$^1$, Xin-Bei Liu$^2$, Arthur L.B. Yang$^3$ and James J.Y. Zhao$^{4}$\\[6pt]

        $^{1}$School of Mathematics and Statistics,\\
		Northwestern Polytechnical University, Xi'an, Shaanxi 710072, P.R. China\\[6pt]
		$^{2,3}$Center for Combinatorics, LPMC\\
		Nankai University, Tianjin 300071, P. R. China\\[6pt]
		$^{4}$School of Accounting,\\
		Guangzhou College of Technology and Business,
       Foshan 528138, P.R. China\\[6pt]
		
		Email: $^{1}${\tt llgao@nwpu.edu.cn},
			   $^{2}${\tt lxb@mail.nankai.edu.cn},
               $^{3}${\tt yang@nankai.edu.cn},
               $^{4}${\tt zhao@gzgs.edu.cn},
		\end{center}
	\noindent\textbf{Abstract.}
    In the study of the generating series of Demazure characters,
    Lascoux used isobaric divided differences to define a family of polynomials $\mathcal{E}_{\sigma}(t)$ indexed by permutations $\sigma$, and asked for a satisfactory expression of these polynomials. In this paper we obtain a combinatorial interpretation of $\mathcal{E}_{\sigma}(t)$ for the permutation $\sigma=[2,3,\ldots,n,1]$ or its inverse in terms of the descent statistic of parking functions of length $n-1$. Based on this progress on Lascoux's open problem, we find that the polynomial $\mathcal{E}_{\sigma}(t)$ for this special case  coincides with 
    the $h$-polynomial $h(\Delta(\NC_W),t)$ of the order complex of the noncrossing partition lattice associated to the irreducible Coxeter group $W$ of type $A_{n-1}$. We are inspired by this coincidence to give an operator approach to $h(\Delta(\NC_W),t)$ for any finite Coxeter group $W$. As an application, we 
    completely solve an open problem on 
    $h(\Delta(\NC_W),t)$ which was proposed by  Athanasiadis, Douvropoulos and Kalampogia-Evangelinou. 
    For any $k$-divisible noncrossing partition poset $\NC^{(k)}_W$, we also obtain the interlacing symmetric decomposition property of the $h$-polynomial $h(\Delta(\NC^{(k)}_W),t)$.

\noindent \emph{AMS Classification 2020:} 05A20, 20F55, 26C10, 52B05

\noindent \emph{Keywords:} Demazure character, Lascoux series, isobaric derivative, parking function, noncrossing partition, $h$-polynomial, interlacing symmetric decomposition.

\section{Introduction}\label{sec 1}
This paper is motivated by an open problem proposed by Lascoux \cite{Lascoux-2016} in his study of Demazure characters. Let us first give an overview of related backgrounds. 
Let $\mathbb{N}$ (resp. $\mathbb{N}^+$) denote the set of nonnegative (resp. positive) integers, and let $\mathbb{C}$ denote the set of complex numbers. For any $n\in \mathbb{N}$ the flag variety $\mathcal{F}(\mathbb{C}^n)$ carries a sequence of tautological line bundles $L_1, \ldots, L_n$. Given a partition $\lambda = (\lambda_1,\ldots,\lambda_n)\in\mathbb{N}^n$, one can associate it to the line bundle
$L^\lambda:=L_1^{\otimes \lambda_1}\otimes \cdots L_n^{\otimes \lambda_n}$. It is well known that each permutation $\sigma \in \mathfrak{S}_n$ defines a Schubert subvariety $\Omega_\sigma \subset \mathcal{F}(\mathbb{C}^n)$.
A fundamental problem in algebraic geometry is to compute the dimension (or postulation) of the space of global sections of $L^\lambda$ restricted to $\Omega_\sigma$.
According to Demazure \cite{Demazure-1974}, these dimensions are given by the specialization $\mathbf{x}=\mathbf{1}$ (i.e., $x_1=\cdots=x_n=1$) of the key polynomial (also named as Demazure character) $\kappa_{\lambda, \sigma}(\mathbf{x})$, where $\mathbf{x} = (x_1, \ldots, x_n)$ is a vector of commuting variables.
Instead of focusing on a fixed line bundle $L^\lambda$, Lascoux \cite{Lascoux-2016} proposed studying all of its tensor powers $L^{k\lambda}$ simultaneously. This leads to the generating  series
\begin{align}\label{gene se}
1+\kappa_{\lambda, \sigma}(\textbf{x})t
+\kappa_{2\lambda, \sigma}(\textbf{x})t^2
+\kappa_{3\lambda, \sigma}(\textbf{x})t^3+\cdots,
\end{align}
whose specialization at $\mathbf{x}=\mathbf{1}$ encodes the dimensions of the spaces of global sections of $L^{j\lambda}$ over $\Omega_\sigma$ for all integers $j \geq 0$. Here, $j\lambda := (j\lambda_1, \ldots, j\lambda_n)$ denotes the dilation of the partition $\lambda$ by $j$. We refer to the series in \eqref{gene se} as Lascoux series. 

Given partition $\lambda$ and permutation $\sigma$, let 
\begin{align*}
\mathfrak{F}_{\lambda, \sigma}(t)=\sum_{j\geq 0} \kappa_{j\lambda, \sigma}(\textbf{1}) t^j.
\end{align*}
Based on the theory of generating functions and the fact that $\kappa_{j\lambda, \sigma}(\textbf{1})$ is a polynomial in $j$ \cite{Valentina-Kirichenko-Smirnov-2012}, we see that 
$\mathfrak{F}_{\lambda, \sigma}(t)$ is a rational function. When written in lowest terms, its numerator is denoted by  $\mathcal{E}_{\lambda, \sigma}(t)$. Lascoux noted that if  $\lambda$ is a strict partition then 
$\mathcal{E}_{\lambda, \sigma}(t)$ is of degree at most $\ell(\sigma)$, where $\ell(\sigma)$ is the length of $\sigma$.
Lascoux \cite{Lascoux-2016} proposed the following open problem.

\begin{prob}[{\cite[P. 262]{Lascoux-2016}}] \label{prob-Lascoux}
Give a satisfactory expression of ${\mathcal{E}}_{\lambda,\sigma}(t)$.
\end{prob}

This paper serves as our first attempt to study Lascoux's open problem. We would like to point out that these polynomials appear to be very interesting, even for some special cases. 
For the staircase partition $\delta_n=(n-1,n-2,\ldots,1,0)$, we may abbreviate 
${\mathcal{E}}_{\delta_n, \sigma}(t)$ as ${\mathcal{E}}_{\sigma}(t)$.
As shown by Lascoux, if $\sigma$ is a maximal element of certain Young subgroup then $\mathcal{E}_{\sigma}(t)$ is some classical Eulerian polynomial. Lascoux also invited the mathematical community to carry out a systematic study of the polynomials ${\mathcal{E}}_{\sigma}(t)$.
In this paper we give a combinatorial interpretation of 
${\mathcal{E}}_{\sigma}(t)$ for $\sigma=[n, 1,2,\ldots,n-1]$ and its inverse $\sigma^{-1}=[2,3,\ldots,n,1]$. 

It turns out that ${\mathcal{E}}_{[n, 1,2,\ldots,n-1]}(t)$ and ${\mathcal{E}}_{[2,3,\ldots,n,1]}(t)$ are related to parking functions, one of the most ubiquitous and fascinating objects in mathematics. 
For more information on parking functions, we refer the reader to Yan's excellent survey \cite{Yan-2015}. Recall that a parking function of length $n$ is a sequence $\mathbf{a} = (a_1, \ldots, a_n)$ of positive integers such that its non-decreasing rearrangement $\mathbf{b} = (b_1, \ldots, b_n)$ satisfies $b_i \le i$ for all $1\leq i\leq n$.
Let $\PF_n$ denote the set of parking functions of length $n$ and let 
\begin{align}\label{eq-gf-parkingfunction}
\PP_n(t):
=\sum_{\mathbf{a}\in \PF_n}t^{n-1-\mathrm{des}(\mathbf{a})}, 
\end{align}
where $\mathrm{des}(\mathbf{a})$ denotes the number of descents in $\mathbf{a}$, i.e., the number of indices $1\leq i\leq n-1$ such that $a_i> a_{i+1}$. 
The first main result of this paper is  as follows.

\begin{thm}\label{main-thm-com-in}
For any $n \geq 2$ we have
\begin{align}\label{first-main-eq}
{\mathcal{E}}_{[n, 1,2,\ldots,n-1]}(t)={\mathcal{E}}_{[2,3,\ldots,n,1]}(t)=\PP_{n-1}(t).
\end{align}
\end{thm}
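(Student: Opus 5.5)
The plan is to compute $\kappa_{j\delta_n,\sigma}(\mathbf 1)$ explicitly as a function of $j$ for the two cyclic permutations, and then match its generating series with a descent-type generating series over $\PF_{n-1}$. First, I reduce to one of the two permutations. The permutation $[2,3,\ldots,n,1]$ has reduced word $s_1s_2\cdots s_{n-1}$ (up to the convention for $\pi_\sigma$), and $[n,1,\ldots,n-1]$ has the reversed word. The isobaric divided differences satisfy $\omega\pi_i\omega=\pi_{n-i}$ for the involution $\omega: x_i\mapsto x_{n+1-i}^{-1}$, up to a monomial twist. Also $\delta_n$ is sent to a shift of itself under $\lambda\mapsto(-\lambda_n,\ldots,-\lambda_1)$, and at $\mathbf x=\mathbf 1$ both the twist and the shift are invisible. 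I therefore expect $\kappa_{j\delta_n,\sigma}(\mathbf 1)=\kappa_{j\delta_n,\sigma'}(\mathbf 1)$ for $\sigma'$ the cycle with the reversed reduced word; if the operator identity is awkward to state, I will instead check that the counting formula obtained below is symmetric under reversal.

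Second, I compute $\pi_{1}\pi_2\cdots\pi_{n-1}x^{\lambda}$ at $\mathbf x=\mathbf 1$ for dominant $\lambda$. Each $\pi_i$ applied to a monomial $x^\alpha$ with $\alpha_i\ge\alpha_{i+1}$ produces $\sum_{k=\alpha_{i+1}}^{\alpha_i}x_i^{k}x_{i+1}^{\alpha_i+\alpha_{i+1}-k}$. Because each $\pi_i$ in the Coxeter word is used only once, iterating gives a sum over integer sequences $(c_1,\ldots,c_{n-1})$ satisfying interlacing-type inequalities. In effect these are the lattice points of a face of the Gelfand–Tsetlin polytope, which is a flagged, Pitman--Stanley-like polytope $P_\lambda$. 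So $\kappa_{\lambda,\sigma}(\mathbf 1)=\#(P_\lambda\cap\mathbb Z^{n-1})$. For $\lambda=j\delta_n$ I expect $P_{j\delta_n}=jP_{\delta_n}$ to be a dilated lattice polytope of dimension $n-1$, cut out by inequalities of the shape $c_1+\cdots+c_i\le$ (partial sums of $j\delta_n$). This gives
\[
\mathfrak F_{\delta_n,\sigma}(t)=\frac{h^*(t)}{(1-t)^{n}},
\]
where $h^*$ is the Ehrhart $h^*$-polynomial. As sanity checks, for $n=2$ the count is $j+1$, so $h^*=1=\PP_1$, and for $n=3$ I expect $h^*=1+2t=\PP_2$.

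Third, I identify $h^*(t)$ with $\PP_{n-1}(t)$. For this I use the standard Stanley/MacMahon decomposition. Triangulating or half-open decomposing the region according to the relative order of the coordinates, each parking function $\mathbf a\in\PF_{n-1}$ indexes a piece, and in the dilation count $\sum_j\#(jP\cap\mathbb Z^{n-1})t^j$ that piece contributes $t^{\,n-2-\des(\mathbf a)}/(1-t)^n$ (the exponent is forced by the normalization $t^{n-1-\des}$ in \eqref{eq-gf-parkingfunction}). Concretely, I will write the lattice points of $jP_{\delta_n}$ as pairs consisting of a parking function $\mathbf a$ of length $n-1$ and a weakly decreasing sequence of nonnegative integers compatible with the descents of $\mathbf a$, with total size bounded by $j$. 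The fact that the Pitman--Stanley polytope has $n^{n-2}$-type volume and lattice-point formulas governed by parking functions makes this the natural bijection to construct. This step, producing a bijection whose statistic is exactly the descent number, is the main obstacle. If a direct bijection resists, I will instead verify that both sides satisfy the same recursion in $n$, obtained by peeling off the operator $\pi_{n-1}$ on one side and the position of the entry $1$ in a parking function on the other.

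Finally, I check that the fraction is in lowest terms, so that $\mathcal E_\sigma(t)$ is indeed this numerator. The denominator is a power of $1-t$, and $\PP_{n-1}(1)=|\PF_{n-1}|=n^{n-2}\neq0$, so $1-t$ does not divide the numerator and no cancellation occurs. The degree of $\PP_{n-1}$ is at most $n-2$, which is below $n-1=\ell(\sigma)$, consistent with Lascoux's degree bound.
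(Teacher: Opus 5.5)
Your first and last steps are sound. With $\omega(x_i)=x_{n+1-i}^{-1}$ one has $\omega\pi_i\omega=\pi_{n-i}$ exactly, and $\omega(\mathbf{x}^{j\delta_n})=\mathbf{x}^{j\delta_n}(x_1\cdots x_n)^{-(n-1)j}$. So the two cycles take the same values at $\mathbf{x}=\mathbf{1}$, which is a cleaner reduction than the paper's "mutatis mutandis"/citation of Lascoux. Also $\PP_{n-1}(1)=n^{n-2}\neq 0$ settles the lowest-terms issue.

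\textbf{The gap is your third step, which is the whole content of the theorem.} The claim that $jP_{\delta_n}$ splits into pieces indexed by $\mathbf{a}\in\PF_{n-1}$, each contributing $t^{n-2-\des(\mathbf{a})}/(1-t)^{n}$, is just the identity $h^*(t)=\PP_{n-1}(t)$ restated geometrically. It amounts to a combinatorial model for the $h^*$-polynomial of a Pitman--Stanley polytope, which is a harder problem than the theorem. You give neither the decomposition nor an actual map for the announced bijection. The fallback recursion also does not stay in one family: applying $\pi_{n-1}$ to $x_{n-1}^{j}$ gives $\sum_{c=0}^{j}x_{n-1}^{j-c}x_n^{c}$. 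The remaining operators then act on non-staircase exponents, i.e.\ a Pitman--Stanley polytope with non-uniform parameters. An induction would therefore need a strengthened hypothesis over general parameters plus a matching refinement on the parking-function side, and you state neither. Two smaller points: you never compute the lattice-point count itself, so there is nothing to compare against. And the defining inequalities are $y_1+\cdots+y_i\le ij$, i.e.\ partial sums of $(j,\ldots,j)$, not of $j\delta_n$.

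\textbf{The missing idea is that no decomposition is needed; you only have to compare coefficient sequences.} Your iteration in step two gives
$\kappa_{j\delta_n,\sigma}(\mathbf{1})=\#\{(j_1,\ldots,j_{n-1}) : 0\le j_1\le j,\ 0\le j_{i+1}\le j_i+j\}=\frac{1}{nj+1}\binom{nj+n}{n}$.
This follows from a known count, or from Pitman--Stanley's lattice-point formula for $\Pi_{n-1}(j,\ldots,j)$. On the parking-function side, the paper argues as follows:
\begin{itemize}
\item Group $\PF_{n-1}$ by the multiset $M$ of entries; membership in $\PF_{n-1}$ depends only on the sorted sequence.
\item By MacMahon's formula, each multiset with multiplicities $m_i$ contributes $\sum_{j\ge0}\prod_i\binom{j+1}{m_i}t^j$.
\item Stanley's enumeration of non-increasing parking functions by type turns the sum over multisets into $\frac{1}{n}\sum_{\alpha}\prod_{i=1}^{n}\binom{j+1}{\alpha_i}$, taken over weak compositions $\alpha$ of $n-1$ into $n$ parts.
\item Vandermonde's convolution (the falling-factorial multinomial theorem) evaluates this to $\frac{1}{n}\binom{n(j+1)}{n-1}=\frac{1}{nj+1}\binom{nj+n}{n}$.
\end{itemize}
This is the paper's route, and it makes a descent-preserving bijection on the polytope unnecessary.
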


Our proof of Theorem \ref{main-thm-com-in} relies on an operational interpretation of the polynomials ${\mathcal{E}}_{\sigma}(t)$ in terms of isobaric divided differences, as given by Lascoux \cite{Lascoux-2016}. 
Recall that, for each $1\leq i\leq n-1$, the action of the isobaric divided difference operator 
$\pi_i$ on a function $F(\mathbf{x})$ is defined by
\begin{align}\label{eq-isobaric-operator}
 \pi_iF(\mathbf{x}):=\frac{x_iF(x_1,\ldots,x_i,x_{i+1},\ldots,x_n)-x_{i+1} F(x_1,\ldots,x_{i+1},x_i,\ldots,x_n)}{x_i-x_{i+1}}.
\end{align}
Given a permutation $\sigma\in\mathfrak{S}_n$, let $\pi_\sigma$ denote the product 
$\pi_{i_1} \pi_{i_2} \cdots \pi_{i_\ell}$ provided that $\sigma$ has a reduced decomposition $s_{i_1} s_{i_2} \cdots s_{i_\ell}$, where every $s_j$ denotes the simple transposition $(j,j+1)$.
For any $\mathbf{a} = (a_1, \ldots, a_n)\in\mathbb{N}^n$, let $\mathbf{x}^{\mathbf{a}}$ stand for the monomial $x_1^{a_1}\cdots x_n^{a_n}$. Lascoux~\cite{Lascoux-2016} 
defined the composite linear operator $D_\sigma$ by sending a function $f(t)$ in the variable $t$ to
\begin{align*}
D_\sigma(f(t)):=\pi_\sigma f(t \mathbf{x}^{\delta_n})\big|_{\mathbf{x} = \mathbf{1}}    
\end{align*}
and showed that 
\begin{align}\label{4}
D_\sigma\left(\frac{1}{1 - t}\right)
=\sum_{j\geq 0} \kappa_{j\delta_n, \sigma}(\textbf{1}) t^j=\frac{\mathcal{E}_{\sigma}(t)}{(1-t)^{\ell(\sigma) +1}}.
\end{align}

As a consequence of Theorem \ref{main-thm-com-in}, we see that the generating polynomials $\PP_{n}(t)$
enjoy an operational interpretation. On the other hand,  
these polynomials also appear as $h$-polynomials of 
order complexes of noncrossing partition lattices associated to the irreducible Coxeter group of type $A_{n-1}$. 
Given a finite Coxeter group  $W$ of rank $r_W$, let $\NC_W$ denote its noncrossing partition lattice and $\Delta(\NC_W)$ the corresponding order complex. 
Let $h(\Delta(\NC_W),t)$ denote the $h$-polynomial of $\Delta(\NC_W)$, which is of degree $r_W-1$. 
By a result due to Stanley \cite{Stanley-1997}, we know that if $W$ is the irreducible Coxeter group of type $A_{n-1}$ then
$h(\Delta(\NC_W),t)={\PP}_{n}(t).$
Inspired by the operational interpretation of ${\PP}_{n}(t)$, we further provide an operator approach to $h(\Delta(\NC_W),t)$ for general $W$. As an application, we completely solve
an open problem on $h(\Delta(\NC_W),t)$ due to  
Athanasiadis, Douvropoulos and Kalampogia-Evangelinou~{\cite[Question 20]{Athanasiadis-Douvropoulos-Kalampogia-Evangelinou-2024}}, as stated in the following theorem.

\begin{thm} \label{que:ADKE_5.5(b)}
\label{Athanasiadis-Douvropoulos-Kalampogia-Evangelinou-question}
Suppose that $W$ is a finite Coxeter group and let $r=r_W-1$. If
\begin{align*}
h(\Delta(\NC_W),t)=\sum_{j=0}^{r}h_j(\Delta(\NC_W))t^j,
\end{align*}
then 
    \begin{align}\label{que}
        \frac{h_0(\Delta(\NC_W))}{h_r(\Delta(\NC_W))}\le\frac{h_1(\Delta(\NC_W))}{h_{r-1}(\Delta(\NC_W))}\le\cdots\le \frac{h_r(\Delta(\NC_W))}{h_0(\Delta(\NC_W))}.
    \end{align}
\end{thm}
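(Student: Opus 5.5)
Since $h(\Delta(\NC_W),t)$ factors over the irreducible components of $W$ (the noncrossing lattice of a product is the product of the lattices, and the $h$-polynomial of the order complex of a product of bounded posets is governed by the corresponding multiplicative structure on flag $h$-vectors), I would first reduce to irreducible $W$. The reduction rests on the fact that the ratio chain \eqref{que} is preserved under products of polynomials with nonnegative coefficients that admit an interlacing symmetric decomposition. Concretely, \eqref{que} is equivalent to $h_i h_{r-j}\ge h_{r-i}h_j$ for $i\le j\le r/2$-type indices, which follows from writing $h(t)=a(t)+t\,b(t)$ with $a$ symmetric of center $r/2$, $b$ symmetric of center $(r-1)/2$, both real-rooted (or at least with a suitable interlacing/log-concave structure) and with nonnegative coefficients. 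So my first step is to prove the general lemma: if $h$ has a symmetric decomposition $(a,b)$ with $a,b$ real-rooted and $b$ interlacing $a$, then the ratios $h_j/h_{r-j}$ are weakly increasing. Since the paper's abstract announces the interlacing symmetric decomposition for $h(\Delta(\NC^{(k)}_W),t)$, this lemma is the intended route.

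The lemma itself I would prove as follows. Since $a_j=a_{r-j}$ and $b_j=b_{r-1-j}$, we get $h_j=a_j+b_{j-1}$ and $h_{r-j}=a_j+b_j$. The inequality $h_j/h_{r-j}\le h_{j+1}/h_{r-j-1}$ then becomes
\[
(a_j+b_{j-1})(a_{j+1}+b_{j+1})\le (a_{j+1}+b_j)(a_j+b_j).
\]
Expanding, I need $a_jb_{j+1}+b_{j-1}a_{j+1}+b_{j-1}b_{j+1}\le a_{j+1}b_j+a_jb_j+b_j^2$. The term $b_{j-1}b_{j+1}\le b_j^2$ follows from Newton's inequalities for the real-rooted $b$. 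The mixed terms $a_jb_{j+1}+a_{j+1}b_{j-1}\le a_jb_j+a_{j+1}b_j$ follow from interlacing: it gives the mixed Newton-type inequalities $a_jb_{j-1}\ge a_{j-1}b_j$ and similar ones, combined with the monotonicity of coefficients of symmetric unimodal sequences up to the middle. The range $j<r/2$ suffices because the chain is symmetric under $j\mapsto r-j$ (the ratios invert).

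The second step is to establish the interlacing symmetric decomposition for each irreducible $W$. Here I would use the operator approach the paper develops: express $h(\Delta(\NC_W),t)$ through isobaric-divided-difference-type operators, or through the known formula $h$ as a sum over chains counted by Fuss–Catalan/Kreweras numbers. Then I would verify that the symmetric decomposition pieces are obtained from real-rooted polynomials by interlacing-preserving operators, such as $f\mapsto (1+t)f+ct f'$ type maps, for which standard compatibility results apply. Exceptional types would be checked by direct computation.

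The main obstacle is this second step, specifically proving real-rootedness and interlacing of $a,b$ uniformly across types A, B, D. For products in the first reduction, I would again avoid needing interlacing to multiply. Instead I would directly check that the inequalities $h_ih_{r-j}\le h_jh_{r-i}$ are preserved under convolution of two sequences each satisfying them together with symmetry-compatible positivity, falling back on the decomposition $(a_1a_2+tb_1b_2,\ a_1b_2+b_1a_2)$, whose interlacing follows from classical product results.
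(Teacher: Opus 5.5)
Your reduction to irreducible $W$ rests on a false premise: $h(\Delta(\NC_W),t)$ does not factor over the irreducible components. By \eqref{eq-product}, $h(\Delta(P\times Q),t)$ is the numerator of the Hadamard product $\sum_{j\ge0}\mathcal Z(P,j)\mathcal Z(Q,j)t^j$, not $h(\Delta(P),t)\,h(\Delta(Q),t)$. Already the degrees disagree: $r_{W_1}+r_{W_2}-1$ versus at most $r_{W_1}+r_{W_2}-2$. For $W=A_1^n$ each factor has $h=1$, while $\NC_W$ is the Boolean lattice and $h(\Delta(\NC_W),t)=\mathcal A_n(t)$. Your pair $(a_1a_2+tb_1b_2,\ a_1b_2+b_1a_2)$ does not even sum to $f_1f_2$, which contains $t^2b_1b_2$. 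A repair would need the Br\"and\'en--Ferroni--Jochemko theorem for Hadamard products (Theorem \ref{thm:haremard-real-brenti}), and then you hit a degree mismatch. With respect to $n_i=\deg\mathcal Z(\NC_{W_i},\cdot)$, the polynomial $h(\Delta(\NC_{W_i}),t)$ has degree at most $n_i-1$ and constant term $1$, so $0=h_{n_i}=a_0+b_0$ forces $b_0=-1$, and no nonnegative decomposition exists. This is why the paper, for Theorem \ref{interlacing-sd} with $k=1$, passes to the reciprocal $t^{r+1}h(\Delta(\NC_W),1/t)=\mathcal W\bigl((-1)^{r+1}\mathcal Z(\NC_W,-m-1)\bigr)$ and comes back via Remark \ref{rem_reciprocity}.

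Your Step 2 is where all the content lies, and it contains no argument. Citing maps like $f\mapsto(1+t)f+ctf'$ and case checks does not explain why an interlacing decomposition should exist. The missing idea is that $\mathcal Z(\NC_W,m)$ is a product of linear factors $(m+1)\mathfrak h-v$, with $\mathfrak h=h_1\cdots h_\ell$ a common modulus. So everything is $\mathcal W$ of a product of linear polynomials $q_i(t)=\mathfrak h t+\mathfrak h-v_i$, with $\mathcal W(q_i)=v_it+\mathfrak h-v_i$. For the reciprocal, the pairing $d_j+d_{j'}=h+2$ of degrees gives linear or quadratic blocks with explicit interlacing decompositions (Theorem \ref{eq-results}). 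The paper's own proof of Theorem \ref{que:ADKE_5.5(b)} avoids symmetric decompositions altogether. It writes $h(\Delta(\NC_W),t)$, for arbitrary finite $W$, as a positive multiple of $g^{\mathbf v}_{\mathfrak h,r}(t)$ with $v_0=0$ and $0\le v_i<\mathfrak h$. It then proves the ratio chain for every such $g$ (Theorem \ref{thm-main-inequality}) by induction via Lemma \ref{lem-recursion}, using only log-concavity from Wagner's theorem.

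Your Step 1 lemma is nonetheless true. Combined with the $k=1$ case of Theorem \ref{interlacing-sd} (whose proof, through Remark \ref{rem_reciprocity}, actually yields $c\preceq b$ for every finite $W$), it would give a valid alternative route. But its justification needs fixing: monotonicity of $a$ below the middle points the wrong way. Instead, use that $b\preceq a$ with nonnegative coefficients implies $b_ia_{i+1}\ge b_{i+1}a_i$ for all $i$. This condition is linear in $b$, and for $b=a(t)/(t-\alpha_k)$ (cf.\ Lemma \ref{lem_Fisk}) it reduces to $b_i^2\ge b_{i-1}b_{i+1}$. It gives $a_jb_{j+1}\le a_{j+1}b_j$. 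The other mixed term, $a_{j+1}b_{j-1}\le a_jb_j$, is the same inequality at $i=r-1-j$ after using $a_i=a_{r-i}$ and $b_i=b_{r-1-i}$.
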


Athanasiadis, Douvropoulos and Kalampogia-Evangelinou \cite[Theorem 3 (a)]{Athanasiadis-Douvropoulos-Kalampogia-Evangelinou-2024} showed that $h(\Delta(\NC_W),t)$ has only real zeros for any finite Coxeter group $W$. This implies that $\PP_n(t)$ has only real zeros for any $n\geq 2$. We are then motivated to study the real-rootedness of the descent generating polynomials over prime parking functions, whose definition was attributed to Gessel by Stanley \cite[Exercise 5.49 (f)]{Stanley-2023}. 
Recall that a prime parking function  of length $n$ is a parking function $\mathbf{a}=(a_1,\ldots,a_n)$ such that its increasing rearrangement $\mathbf{b}=(b_1,\ldots,b_n)$ satisfies $b_1=1$ and $b_i<i$ for any $2\leq i\leq n$.
Let $\PPF_n$ denote the set of prime parking functions of length $n$ and let 
\begin{align}\label{eq-ndes-prime-pfs} 
\PPP_n(t):
=\sum_{\mathbf{a}\in \PPF_n}t^{n-1-\mathrm{des}(\mathbf{a})}.
\end{align}
Throughout this paper, we use $\PP_n(t)$ and $\PPP_n(t)$ to denote the polynomials given by \eqref{eq-gf-parkingfunction} and \eqref{eq-ndes-prime-pfs}
respectively. We have the following result. 

\begin{thm} \label{PPF-RZ}
For any $n\geq 2$ the polynomial $\PPP_n(t)$ has only real zeros.
\end{thm}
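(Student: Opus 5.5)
The plan is to realize $\PPP_n(t)$ as the numerator of a rational generating function $\sum_{j\ge 0}N_n(j)t^j=\PPP_n(t)/(1-t)^{d+1}$, where $N_n(j)$ is a polynomial in $j$ of degree $d$ (expected to be $n-1$). I would then invoke Brenti's theorem (equivalently Wagner's result on the $\mathcal{E}$-transformation): if a polynomial $N(j)$ has all its zeros real and in $[-1,0]$, then the numerator $W(t)$ of $\sum_j N(j)t^j=W(t)/(1-t)^{\deg N+1}$ has only real zeros. The model is type $A$. There $\PP_n(t)$ is the $h$-polynomial of $\Delta(\NC_W)$, and the corresponding counting polynomial is essentially the zeta polynomial of the noncrossing partition lattice, $\frac1n\binom{jn}{n-1}$. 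This is a product of linear factors with zeros $0,\frac1n,\dots,\frac{n-2}{n}$, all in a unit interval, so after a shift of $j$ it fits Brenti's hypothesis. I expect an analogous product formula for prime parking functions.

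First I need $N_n(j)$ combinatorially. The standard $P$-partition (barred sequence) argument gives, for any set $S$ of words of length $n$,
\[
\sum_{j\ge0}\#\{(\mathbf a,\,0\le c_1\le\cdots\le c_n\le j \text{ compatible with } \mathbf a)\}\,t^j=\frac{\sum_{\mathbf a\in S}t^{\des(\mathbf a)}}{(1-t)^{n+1}}.
\]
Here compatibility means $c_i<c_{i+1}$ whenever $i$ is a descent of $\mathbf a$. Since $\PPP_n$ uses the reversed statistic $t^{n-1-\des}$, I would either reverse the words or use ascents. This is harmless because palindromic reversal preserves real-rootedness. So $N_n(j)$ counts prime parking functions decorated with weakly increasing labels that are strict at descents. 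Equivalently, by merging $\mathbf a$ with $\mathbf c$, it counts words in a product alphabet satisfying a prime-parking condition.

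To evaluate $N_n(j)$ I would use the cyclic lemma behind the count $|\PPF_n|=(n-1)^{n-1}$. The group $\mathbb{Z}_{n-1}$ acts on $[n-1]^n$ by adding $1$ modulo $n-1$ to every entry, and each orbit contains exactly one prime parking function. The decorations must be transported through this action. Adding a constant modulo $n-1$ changes the descent set only at positions where values wrap around. I would therefore replace the alphabet by the combined alphabet $\{(c,a)\}$ ordered lexicographically, so that the decorated words become ordinary words over an alphabet of size about $(j+1)(n-1)$, and the cyclic shift acts on this enlarged alphabet. If this works, it gives something like
\[
N_n(j)=\frac{1}{n-1}\binom{(j+1)(n-1)+n-1}{n}\quad\text{or}\quad \frac{1}{n-1}\binom{(j+1)(n-1)}{n-1},
\]
up to a shift in $j$. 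This should be checked against small $n$, for instance $\PPP_2=1$ and $\PPP_3(t)$ computed by hand from the four prime parking functions $11,12,21$ and $111$-type cases. Any such binomial formula factors into linear factors in $j$ whose roots are equally spaced with gaps $1/(n-1)$ and lie inside a unit interval. Brenti's theorem then applies after the appropriate shift, which completes the proof.

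The main obstacle is the second step: making the cyclic-lemma bijection compatible with the decorations. The rotation does not literally preserve descents. If the lexicographic trick fails, my fallback is different. I would compute the decorated count via the "prime" decomposition, since parking functions factor as concatenations of prime blocks under a shift. This gives a functional equation linking the generating function of $N_n(j)$ to the known type-$A$ zeta polynomial $\frac1n\binom{jn}{n-1}$. From that I would extract $N_n(j)$ directly by Lagrange inversion, which should again produce a single binomial coefficient.
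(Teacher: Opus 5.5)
Your end game matches the paper's. The paper proves $\PPP_n(t)/(1-t)^{n+1}=\sum_{j\ge 0}\frac{1}{n-1}\binom{(n-1)(j+1)}{n}t^j$ (Proposition~\ref{prop-ppf-operator}). It then applies Wagner's theorem to the linear factors $(n-1)(j+1)-i$, $0\le i\le n-1$ (Theorem~\ref{thm-main-realzeros}); in this setting that is the same as Brenti's $[-1,0]$ criterion.

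The difference lies in how the coefficient polynomial is obtained. The paper combines Novelli--Thibon's count of nonincreasing prime parking functions by type with MacMahon's formula and the multinomial theorem for falling factorials. You propose a cyclic-lemma count of decorated words instead. You leave that count conditional, but it does go through, and the obstacle you flag (rotation does not preserve descents) disappears once positions are forgotten:
\begin{itemize}
\item Under your lexicographic merge, a compatible pair $(\mathbf a,\mathbf c)$ with $c_n\le j$ is a weakly increasing word. That is, it is an $n$-element multiset $M$ of $\{0,\ldots,j\}\times[n-1]$, and the condition $\mathbf a\in\PPF_n$ depends only on the projection of $M$ to $[n-1]$.
\item Let $\mathbb{Z}_{n-1}$ act by $(c,a)\mapsto(c,a+1 \bmod n-1)$. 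Suppose a subgroup of order $r>1$ fixed an $n$-element multiset of $\mathbb{Z}_{n-1}$. Then the multiplicities would be constant on its cosets, forcing both $r\mid n$ and $r\mid n-1$, which is impossible. So the action is free.
\item The cyclic lemma you quote then transfers from sequences to multisets: sorting maps each sequence orbit bijectively onto a multiset orbit, both of size $n-1$. Hence each orbit contains exactly one $M$ with prime projection.
\end{itemize}
This gives $N_n(j)=\frac{1}{n-1}\binom{(j+1)(n-1)+n-1}{n}$, your first candidate, where $\sum_{j\ge0}N_n(j)t^j=A(t)/(1-t)^{n+1}$ and $A(t)=\sum_{\mathbf a\in\PPF_n}t^{\des(\mathbf a)}$. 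Your route is more self-contained and explains why a single binomial appears. The paper's route buys the operator form $g^{(0,1,\ldots,n-1)}_{n-1,n-1}$, which it reuses for symmetry, for the Sturm-sequence theorem and for the Coxeter-group $h$-polynomials.

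Two points need tightening. First, a shift of $j$ is not innocent: replacing $N(j)$ by $N(j+s)$ changes the numerator in general. Here $N_n(j)=\frac{1}{n-1}\binom{(j+2)(n-1)}{n}$ has zeros $-2+\frac{i}{n-1}$, $0\le i\le n-1$, which lie in $[-2,-1]$, not $[-1,0]$. The shift works only because $N_n(-1)=0$. Then $\sum_{j\ge0}N_n(j-1)t^j=t\,A(t)/(1-t)^{n+1}$, and Brenti applied to $N_n(j-1)$ gives real zeros of $tA(t)$. Equivalently, reciprocity gives $\frac{1}{n-1}\binom{(j+1)(n-1)}{n}$ for $\PPP_n$ itself, with zeros $-1+\frac{i}{n-1}\in[-1,0]$; this is exactly the paper's formula.

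Second, there are some slips:
\begin{itemize}
\item $N_n$ has degree $n$, not $n-1$ (your own identity has $(1-t)^{n+1}$), so your second candidate formula is wrong.
\item With the statistic $t^{n-1-\des}$ one has $\PPP_2(t)=t$, not $1$.
\item The prime parking functions of length $3$ are $111,112,121,211$, so $\PPP_3(t)=2t+2t^2$.
\item Reversing words turns $\des$ into the number of strict ascents, not $n-1-\des$ (which also counts levels). What you need is the polynomial reversal $\PPP_n(t)=t^{n-1}A(1/t)$.
\end{itemize}
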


Athanasiadis, Douvropoulos and Kalampogia-Evangelinou \cite[Theorem 3 (b)]{Athanasiadis-Douvropoulos-Kalampogia-Evangelinou-2024} also proved that $h(\Delta(\NC_W),t)$ has a nonnegative real-rooted symmetric decomposition with respect to $r_W-1$ for every irreducible finite Coxeter group $W$.
Motivated by their work, we further study the interlacing symmetric decomposition of 
$h(\Delta(\NC^{(k)}_W),t)$ for any $k\geq 1$, 
where the poset $\NC^{(k)}_W$ was introduced by Armstrong \cite{Armstrong-2009}
as a unified generalization of $\NC_W$ and the poset of classical $k$-divisible noncrossing partitions.
Our approach is based on the operational interpretation of the polynomials $h(\Delta(\NC^{(k)}_W),t)$, together with Br\"and\'en, Ferroni and Jochemko's recent work \cite{Branden-2024} on the preservation of properties for symmetric decompositions under Hadamard products. 
Given a polynomial $a(t)$ of degree $s\leq d$, let $\mathcal{I}_d(a)(t)=t^d a(t^{-1})$. The polynomial $a(t)$ is said to be symmetric with center of symmetry $d/2$ if $\mathcal{I}_d(a)(t)=a(t)$; in particular, if $a(t)\equiv0$, then $\mathcal{I}_d(a)(t)=a(t)$ for every $d$. As shown by Br\"and\'en and Solus~\cite{Branden-2021}, for every polynomial $f(t)$ of degree at most $n$ there exist unique symmetric polynomials $a(t)$ and $b(t)$ such that $f(t)=a(t)+tb(t)$, $\mathcal{I}_n(a)(t)=a(t)$, and $\mathcal{I}_{n-1}(b)(t)=b(t)$. The pair $(a(t),b(t))$ is called the symmetric decomposition of $f(t)$ with respect to $n$. If both $a(t)$ and $b(t)$ have only nonnegative coefficients, then $(a(t),b(t))$ is called a nonnegative symmetric decomposition of $f(t)$. We say that a polynomial is real-rooted if all of its zeros are real numbers, or if it is identically zero; the decomposition is called real-rooted if both $a(t)$ and $b(t)$ are real-rooted. For nonzero real-rooted polynomials $a(t)$ and $b(t)$, with zeros $\alpha_n\leq\cdots\leq \alpha_1$ and $\beta_m\leq\cdots\leq \beta_1$, respectively, we write $b(t)\preceq a(t)$ if $\cdots\leq \beta_2\leq \alpha_2\leq \beta_1\leq \alpha_1$ (which forces $n=m$ or $n=m+1$). The pair $(a(t),b(t))$ is called an interlacing symmetric decomposition of $f(t)$ if $b(t)\preceq a(t)$. Following the usual convention, for every real-rooted polynomial $f(t)$, we have $0\preceq f(t)$ and $f(t)\preceq0$. 
In this paper we establish the following result. 

\begin{thm} \label{interlacing-sd}
Suppose that $W$ is a finite Coxeter group of rank $r_W$. Then $h(\Delta(\NC_W),t)$ has a nonnegative real-rooted symmetric decomposition with respect to $r_W-1$, and for every integer $k\geq 2$ the polynomial $h(\Delta(\NC^{(k)}_W),t)$ has a nonnegative interlacing symmetric decomposition with respect to $r_W$. 
\end{thm}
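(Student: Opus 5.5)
We reduce to irreducible $W$ and then use an explicit product formula for the zeta polynomial of $\NC^{(k)}_W$ together with the Hadamard-product results of Br\"and\'en, Ferroni and Jochemko \cite{Branden-2024}.

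\emph{Zeta polynomial and $h$-polynomial.} For irreducible $W$ of rank $r$ with degrees $d_1\le\cdots\le d_r$ and Coxeter number $h=d_r$, Armstrong's formula for the zeta polynomial of $\NC^{(k)}_W$ (with a top element adjoined as needed) gives
\[
Z(\NC^{(k)}_W,m)=\prod_{i=1}^{r}\frac{kmh+d_i}{d_i}.
\]
Standard order-complex theory then yields
\[
\frac{h(\Delta(\NC^{(k)}_W),t)}{(1-t)^{r+1}}=\sum_{m\ge0}Z(\NC^{(k)}_W,m+1)\,t^{m}
\]
up to a shift. This is the operator formulation the paper sets up.

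\emph{Factorization as a Hadamard product.} Each factor $(kmh+d_i)/d_i$ is a linear polynomial in $m$, so its generating series is $a_i(t)/(1-t)^2$, where $a_i(t)$ is linear with nonnegative coefficients. Hence $h(\Delta(\NC^{(k)}_W),t)$ is, up to normalization, the Hadamard product (in the sense of \cite{Branden-2024}) of $r$ polynomials of degree at most $1$. This is exactly the setting of Br\"and\'en--Ferroni--Jochemko, whose main theorem states the following: if each factor has a nonnegative interlacing symmetric decomposition with respect to its degree bound, then so does the Hadamard product, with respect to the summed bound. A linear polynomial $\alpha+\beta t$ with $\beta\ge\alpha\ge0$ has decomposition $a=\alpha(1+t)$ and $b=\beta-\alpha$. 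This decomposition is nonnegative and trivially interlacing. The check $\beta\ge\alpha$ reduces to $kh\ge d_i$, which holds for $k\ge1$ since $d_i\le h$.

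\emph{Case $k\ge2$.} Applying the theorem gives the interlacing decomposition with respect to $r_W$. For reducible $W$, the poset $\NC^{(k)}_W$ is a product of the posets for the factors, so $Z$ multiplies and the same Hadamard argument applies to the combined list of factors.

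\emph{Case $k=1$.} Here the bound drops to $r_W-1$, because $h(\Delta(\NC_W),t)$ has degree $r_W-1$. The top factor $d_r=h$ gives $kh=d_r$, so $\beta=\alpha$, and that factor contributes only the symmetric term $\alpha(1+t)$. Equivalently, the top element is a cone point. The plan is to remove this factor and apply the Hadamard theorem to the remaining $r-1$ factors, where a strict inequality $h>d_i$ holds. This yields a nonnegative interlacing, in particular real-rooted, decomposition with respect to $r_W-1$. For reducible $W$, each irreducible component contributes exactly one such degenerate factor, and we need to track the resulting degree drop.

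\emph{Main obstacle.} The hardest step is matching the normalization and indices exactly: we must ensure the "with respect to $r_W$'' versus "$r_W-1$'' bookkeeping agrees with the degree conventions in \cite{Branden-2024}, and verify the hypothesis of that theorem for each factor. If the interlacing hypotheses turn out to require more than nonnegativity of linear factors, the first thing to try is the fallback to real-rootedness-only preservation results in the same paper for the $k=1$ statement.
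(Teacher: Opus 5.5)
Your $k\ge 2$ argument is essentially the paper's: factor $\mathcal Z(\NC^{(k)}_W,m)$ into linear factors, give each a nonnegative interlacing decomposition, iterate the Br\"and\'en--Ferroni--Jochemko theorem, and multiply over irreducible components. Two slips need fixing.

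\begin{enumerate}
\item The identity must be exactly $\sum_{m\ge0}\mathcal Z(\NC^{(k)}_W,m)t^m=h(\Delta(\NC^{(k)}_W),t)/(1-t)^{r_W+1}$, with no shift. A shift changes the numerators: for example, $\sum_{m\ge0}(kh(m+1)+d_i)t^m$ has numerator $(kh+d_i)-d_it$, which has a negative coefficient.
\item $\sum_{m\ge0}(khm+d_i)t^m=\bigl(d_i+(kh-d_i)t\bigr)/(1-t)^2$. So $\alpha=d_i$ and $\beta=kh-d_i$, and $\beta\ge\alpha$ means $kh\ge 2d_i$, not $kh\ge d_i$. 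This holds precisely because $k\ge 2$ and $d_i\le h$.
\end{enumerate}

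For $k=1$ there is a genuine gap, and it comes from the same slip. The factor with $d_r=h$ contributes the constant $h$, not $h(1+t)$. Every factor with $d_i>h/2$ has $b=h-2d_i<0$, so the remaining factors do not satisfy the Hadamard hypothesis; for $A_3$ one of them is $3+t$.

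Nor can you discard the factor $h(m+1)$. Multiplying the zeta polynomial by $m+1$ amounts to applying $D$ to the series, not multiplying by a scalar. Removing a cone point replaces $\mathcal Z(P,m)$ by its first differences, which is not the same as deleting a factor. Concretely, for $A_3$ your recipe yields $\mathcal W\bigl((4m+2)(4m+3)\bigr)\propto 3+12t+t^2$, whereas $h(\Delta(\NC_{A_3}),t)=1+10t+5t^2$.

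In fact no grouping of the factors of $\mathcal Z(\NC_W,m)$ can work. If one did, the Hadamard theorem would give a nonnegative decomposition of $f=h(\Delta(\NC_W),t)$ with respect to $r_W$. But since $\deg f=r_W-1$, that decomposition has $b(0)=-f(0)<0$. Your proposed fallback to real-rootedness preservation does not address this, because the obstruction is nonnegativity and the degree defect, not interlacing.

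The missing idea is reciprocity. The paper works with
\[
\hat f(t)=t^{r_W}f(1/t)=\mathcal W\bigl((-1)^{r_W}\mathcal Z(\NC_W,-m-1)\bigr),
\]
whose linear factors have $\mathcal W$ equal to $(h-d_j)+d_jt$. These still fail individually when $d_j<h/2$. However, pairing the degrees so that $d_j+d_{j'}=h+2$, with a possible singleton $d_j=(h+2)/2$, produces quadratics with nonnegative interlacing decompositions with respect to $2$ (Theorem~\ref{eq-results}). Iterating the Hadamard theorem then gives such a decomposition of $\hat f$ with respect to $r_W$. Because $\hat f(0)=0$, Remark~\ref{rem_reciprocity} (using that $b\preceq tc$ implies $c\preceq b$) converts it into a nonnegative interlacing, hence real-rooted, decomposition of $f$ with respect to $r_W-1$.
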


As will be shown in subsequent sections, Theorems \ref{que:ADKE_5.5(b)}, \ref{PPF-RZ} and \ref{interlacing-sd} can be proved within a general framework, while Theorem \ref{main-thm-com-in} can be considered as the starting point of our approach. 
The main tool we use is the isobaric derivative $D$ acting on functions $f(t)$ of $t$ by 
\begin{align*}
D(f(t)):=\frac{\text{d}}{\text{d}t}\big(tf(t)\big).    
\end{align*}
Throughout this paper $D$ denotes the above defined operator. 
By the theory of rational generating functions, every operator $D_\sigma$ introduced by Lascoux~\cite{Lascoux-2016} 
can be written as a polynomial of $D$. This inspires us to introduce a class of operators, as well as a class of associated polynomials.  
Given a positive integer $n$, a nonnegative integer $m$ and a sequence $\mathbf{v} =(v_0,v_1,\ldots, v_{m})$ of nonnegative real numbers, 
define the operator
\begin{align}\label{eq:defi-O_nk}
O^{\bf{v}}_{n,m}:=(nD - v_m) \cdots(nD - v_1)(nD-v_0).
\end{align}
By acting $O^{\bf{v}}_{n,m}$ on $\frac{1}{1-t}$, we find that there exists certain polynomial $g^{\bf{v}}_{n,m}(t)$ of degree at most $m+1$ such that
\begin{align}\label{eq:defi-fnk(x)}
O^{\bf{v}}_{n,m} \left( \frac{1}{1 - t} \right)=
\frac{g^{\bf{v}}_{n,m}(t)}{(1 - t)^{m + 2}},
\end{align}
when written in lowest terms.
It turns out that many polynomials of combinatorial significance  appear as $g^{\bf{v}}_{n,m}(t)$, such as the $r$-colored Eulerian polynomial $\mathcal{A}_{n,r}(t)$ \cite{Steingrimsson-1994}, and $(\operatorname{des}_B,\operatorname{neg})$ $q$-Eulerian polynomials $\mathcal{B}_n(t,q)$ over the hyperoctahedral group \cite{Brenti-1994}.
Recall that (see \cite[Eq. (12)]{Brenti-1994} and \cite[Theorem 17]{Steingrimsson-1994}) 
\begin{align*}
\sum_{j\ge0}(rj+1)^n t^j=\frac{\mathcal{A}_{n,r}(t)}{(1-t)^{n+1}},\qquad\sum_{j\ge0}((1+q)j+1)^n t^j=\frac{\mathcal{B}_n(t,q)}{(1-t)^{n+1}}.
\end{align*}

The remainder of this paper is organized as follows. Section \ref{sec 2} is devoted to the proof of Theorem \ref{main-thm-com-in}. 
In Section \ref{sec 3} we express $\PPP_n(t)$ and $h(\Delta(\NC^{(k)}_W),t)$ in terms of the above polynomial $g^{\bf{v}}_{n,m}(t)$.
In Section \ref{sec 4} we prove some general properties of $g^{\bf{v}}_{n,m}(t)$ and then prove Theorems \ref{que:ADKE_5.5(b)} and \ref{PPF-RZ}. 
The proof of Theorem \ref{interlacing-sd} is given in Section \ref{sec 5}. Finally, we discuss the real-rootedness of $\mathcal{E}_{\lambda,\sigma}(t)$ in Section \ref{sec 6}.

\section{Lascoux’s operators and parking functions}\label{sec 2}

The aim of this section is to prove Theorem \ref{main-thm-com-in}. By \eqref {first-main-eq} it suffices to show that 
\begin{align}\label{eq_three equalities}
\frac{{\mathcal{E}}_{[n, 1,2,\ldots,n-1]}(t)}{(1-t)^{n}}=\frac{{\mathcal{E}}_{[2,3,\ldots,n,1]}(t)}{(1-t)^{n}}=\frac{\PP_{n-1}(t)}{{(1-t)^{n}}}.
\end{align}
Before plunging into the proof, let us briefly recall some basic definitions and provide some examples to illustrate these concepts.

Let $(n)_{m}$ be the falling factorial of $n$ of order $m$ for $n\in\mathbb{N}^+$ and $m\in\mathbb{N}$. The notation $\mathbb{R}_{\ge 0}$ (resp. $\mathbb{R}_{>0}$) refers to the set of nonnegative (resp. positive) real numbers, while $\mathbb Q_{\ge0}[t]$ (resp. $\mathbb Q_{>0}[t]$) denotes the set of polynomials in $t$ with nonnegative (resp. positive) rational coefficients. The meaning of $\mathbb{Z}_{\ge 0}$ is similar. Let $[n]=\{1,2,\ldots,n\}$ and write $\mathfrak{S}_n$ for the symmetric group on $[n]$.
Each $\sigma\in \mathfrak{S}_n$ is a mapping of $[n]$ onto itself. This permutation is represented in one-line notation as the word $\sigma=[\sigma(1),\sigma(2),\ldots,\sigma(n)]$. 
The length of $\sigma$, denoted $\ell(\sigma)$, is the number of its inversions, namely, $\ell(\sigma)=\#\{(i,j)\mid \sigma(i)>\sigma(j) \mbox{ and } i<j\}$. 
For each $1\le i\le n-1$, let $s_i$ denote the simple transposition that interchanges $i$ and $i+1$ and fixes all other letters of $[n]$. The simple transpositions satisfy the following Coxeter relations:
\begin{align*}
s_i^2 &= 1 && \text{for all } i; \\
s_i s_j &= s_j s_i && \text{if } |i-j| > 1; \\
s_i s_{i+1} s_i &= s_{i+1} s_i s_{i+1} && \text{for } 1 \le i \le n-2.
\end{align*}
We use the convention that products of permutations are read from left to right. Thus, for $\sigma\in\mathfrak S_n$, the product $\sigma s_i$ interchanges the entries in positions $i$ and $i+1$, namely, $\sigma s_i=[\sigma(1),\ldots,\sigma(i+1),\sigma(i),\ldots,\sigma(n)]$.
The symmetric group $\mathfrak{S}_n$ is generated by the simple transpositions $s_1,\ldots,s_{n-1}$, and any permutation $\sigma\in\mathfrak{S}_n$ can be written as a product of simple transpositions. 
A {reduced decomposition} for $\sigma$ with $\ell(\sigma)=\ell$ is an expression as a product of simple transpositions $\sigma = s_{i_1} s_{i_2} \cdots s_{i_\ell}$.
For example, $[2,3,\ldots,n,1]=s_1s_2\cdots s_{n-1}$ and $[n,1, 2,\ldots,n-1]=s_{n-1}s_{n-2}\cdots s_1$ by our convention. 

Let $\mathbf{x} = (x_1, \ldots, x_n)$ be a finite sequence
of commuting variables. The group $\mathfrak{S}_n$ acts on a function $f(\mathbf{x})$ by permuting variables.  
For each $1\le i\le n-1$, 
let $\pi_i$ denote the isobaric divided difference
operator defined by \eqref{eq-isobaric-operator}.
It is straightforward to verify that the
operators $\pi_i$ are linear and idempotent.  
Generally, for each $\sigma \in \mathfrak{S}_n$, one may define 
\begin{align*}
\pi_\sigma := \pi_{i_1} \pi_{i_2} \cdots \pi_{i_\ell},
\end{align*}
where $s_{i_1} s_{i_2} \cdots s_{i_\ell}$ is a reduced decomposition of $\sigma$, and this definition is independent of the choice of reduced expression. Throughout this paper, $\pi_i\pi_j f$ means $\pi_i(\pi_j f)$. This differs from Lascoux's notation \cite{Lascoux-2016}, where the same operation is written as $f\pi_j\pi_i$.

A partition $\lambda$ of a nonnegative integer $n$ is a sequence $(\lambda_1,\ldots,\lambda_m)\in\mathbb{N}^k$ satisifying $\lambda_1\ge\cdots \ge\lambda_m$ and $\sum \lambda_i=n$, denoted as $\lambda\vdash n$. The number of parts of $\lambda$ (i.e., the number of nonzero $\lambda_i$) is the length of $\lambda$, denoted $\ell(\lambda)$. Write $m_i(\lambda)$ for the number of parts of $\lambda$ equal to $i$. We also use $\langle1^{m_1(\lambda)}2^{m_2(\lambda)}\cdots\rangle$ to represent the partition $\lambda$. Given $\sigma \in \mathfrak{S}_{n}$ and a partition $\lambda$ of length at most $n$, we write
$\textbf{x}^{\lambda}=x_1^{\lambda_1}\cdots x_{n}^{\lambda_n}$ and define
$$
\kappa_{\lambda,\sigma}(\mathbf{x})=\pi_\sigma \textbf{x}^{\lambda}.
$$
The polynomials $\kappa_{\lambda,\sigma}(\mathbf{x})$ were introduced by Demazure \cite{Demazure-1974}, and 
extensively studied by the combinatorial community with variant names. 
These polynomials were called essential polynomials in \cite{Lascoux-Paul-1982}, standard bases in \cite{Lascoux-Paul-1989,Lascoux-Paul-1990}, key polynomials in \cite{Reiner-Shimozono-1995}, and Demazure characters in \cite{Postnikov-Stanley-2008}. Here we adhere to the terminology due to Reiner and Shimozono  \cite{Reiner-Shimozono-1995} but use a different notation.
We use an example to illustrate the computation of $\kappa_{\lambda,\sigma}(\mathbf{x})$. 
Take $\lambda=(2,1,0)$ and $\sigma=[2,3,1]$. Since $\sigma=s_1s_2$, we have
\begin{align*}
\kappa_{\lambda,\sigma}(\mathbf x)&=\pi_1\pi_2(x_1^2x_2)=\pi_1\left(x_1^2x_2+x_1^2x_3\right)\\
&=x_1^2x_2+x_1x_2^2+x_1^2x_3+x_1x_2x_3+x_2^2x_3.
\end{align*}

We continue to prove \eqref{eq_three equalities} by considering the series expansion of each of the three rational functions.  
If no confusion arises, the radius of convergence of the power series will not be explicitly stated throughout this paper. 
Let us first determine the series expansion of 
$\frac{{\mathcal{E}}_{[n, 1,2,\ldots,n-1]}(t)}{(1-t)^{n}}$. In view of \eqref{4} and the fact that
\begin{align}\label{eq-Dm}
D^m\left(\frac{1}{1-t}\right)
=\sum_{j\geq 0}(j+1)^mt^{j},
\end{align}
it is desirable to give a polynomial expression of $D_{[n, 1,2,\ldots,n-1]}$ in terms of the operator $D$. Using the notation defined by \eqref{eq:defi-O_nk}, we have the following result. For the sake of completeness, we include a proof here.
\begin{prop}\label{thm_w}
    For $n \geq 2$ we have
\begin{align}\label{eq-operator-identity}
    D_{[n, 1,2,\ldots,n-1]}=\frac{1}{n!}O^{{(0,1,\ldots,n-2)}}_{n,n-2},
\end{align}
or equivalently
\begin{align*}
\frac{{\mathcal{E}}_{[n, 1,2,\ldots,n-1]}(t)}{(1-t)^{n}}=\sum_{j\geq 0} \frac{1}{nj+1}\binom{nj+n}{n}t^j. 
\end{align*}
\end{prop}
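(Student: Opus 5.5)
The plan is to prove the coefficient form. Since $D(t^j)=(j+1)t^j$, any polynomial $P(D)$ acts diagonally on power series, with $P(D)\sum_j c_jt^j=\sum_j P(j+1)c_jt^j$. Applied to $\frac{1}{1-t}=\sum_j t^j$, the right-hand side of \eqref{eq-operator-identity} gives
\[
\frac{1}{n!}\prod_{i=0}^{n-2}\bigl(n(j+1)-i\bigr)=\frac{(nj+n)!}{n!\,(nj+1)!}=\frac{1}{nj+1}\binom{nj+n}{n}
\]
as coefficient of $t^j$. By \eqref{4}, the coefficient of $t^j$ in $D_{[n,1,\ldots,n-1]}\bigl(\frac{1}{1-t}\bigr)$ is $\kappa_{j\delta_n,\sigma}(\mathbf 1)$. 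The operator $D_\sigma$ is linear and also acts diagonally on monomials $t^j$: $t^j\mapsto \kappa_{j\delta_n,\sigma}(\mathbf 1)t^j$. So both the operator identity and the series identity reduce to
\[
\kappa_{j\delta_n,\sigma}(\mathbf 1)=\frac{1}{nj+1}\binom{nj+n}{n}\qquad(j\ge 0).
\]
I will prove this directly.

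With our conventions $\sigma=s_{n-1}\cdots s_1$, so $\kappa_{\lambda,\sigma}=\pi_{n-1}\cdots\pi_2\pi_1\mathbf x^{\lambda}$, and $\pi_1$ acts first. Put $\lambda=j\delta_n$, so $\lambda_k=j(n-k)$ and $\lambda_1\ge\lambda_2\ge\cdots\ge\lambda_n=0$. The basic rule is that for $a\ge b$,
\[
\pi_i(x_i^ax_{i+1}^b)=\sum_{c=b}^{a}x_i^{c}x_{i+1}^{a+b-c}.
\]
The other variables are untouched, since they are symmetric under $s_i$.

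I will show by induction on $k$ that $\pi_k\cdots\pi_1\mathbf x^{\lambda}$ is a positive sum of monomials of the form
\[
x_1^{e_1}\cdots x_k^{e_k}\,x_{k+1}^{D_{k+1}}\,x_{k+2}^{\lambda_{k+2}}\cdots x_n^{\lambda_n},
\qquad \lambda_{k+1}\le D_{k+1}\le D_k,\quad D_1=\lambda_1.
\]
The inductive step applies $\pi_{k+1}$ to $x_{k+1}^{D_{k+1}}x_{k+2}^{\lambda_{k+2}}$. The condition $D_{k+1}\ge\lambda_{k+1}\ge\lambda_{k+2}$ puts us in the case $a\ge b$ of the rule, so no cancellation occurs. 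The new exponent of $x_{k+2}$ is $D_{k+2}=D_{k+1}+\lambda_{k+2}-c$, and as $c$ runs over $[\lambda_{k+2},D_{k+1}]$, $D_{k+2}$ runs bijectively over $[\lambda_{k+2},D_{k+1}]$. Setting $\mathbf x=\mathbf 1$ then shows that $\kappa_{j\delta_n,\sigma}(\mathbf 1)$ equals the number of chains
\[
j(n-1)=D_1\ge D_2\ge\cdots\ge D_n\ge 0,\qquad D_k\ge j(n-k).
\]

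Counting these chains is the main step. Reindex by $a_i=D_{n+1-i}$, so $0\le a_1\le a_2\le\cdots\le a_n=j(n-1)$ with $a_i\ge j(i-1)$. Reflecting by $a_i\mapsto j(n-1)-a_{n+1-i}$ turns this into the standard set of weakly increasing sequences bounded by the line of slope $j$. These correspond to lattice paths weakly below a line of slope $j$, that is, $j$-Dyck-type (Fuss) paths. Their number is the Fuss–Catalan number $\frac{1}{jn+1}\binom{(j+1)n}{n}$, by the cycle lemma or by citing the classical count. Since $\frac{1}{jn+1}\binom{(j+1)n}{n}=\frac{1}{nj+1}\binom{nj+n}{n}$, this finishes the proof.

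The delicate points are the exact off-by-one in the boundary condition $a_i\le j(i-1)$ versus $a_i\le ji$, which I will check against $n=2$ (there are $j+1$ chains, and $\frac{1}{2j+1}\binom{2j+2}{2}=j+1$), and the positivity claim that every $\pi_i$ is applied to a monomial with $a\ge b$.
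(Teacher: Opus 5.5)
Your proposal is correct and follows the paper's proof essentially step for step. Both arguments reduce to the diagonal action on $t^j$, unwind $\pi_{n-1}\cdots\pi_2\pi_1$ applied to $\mathbf{x}^{j\delta_n}$ into a count of nested chains, and identify that count as $\frac{1}{nj+1}\binom{nj+n}{n}$; your $D_{k+1}-j(n-k-1)$ is exactly the paper's summation index $j_k$. The only cosmetic difference is the last step: you reflect to $0=b_1\le\cdots\le b_n$ with $b_i\le j(i-1)$ and invoke the Fuss--Catalan count, whereas the paper cites Heubach--Li--Mansour for the nested sum directly.
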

\begin{proof}
To prove \eqref{eq-operator-identity}, it suffices to show the coincidence of the actions of three operators on $t^j$ for any nonnegative integer $j$. 
From \eqref{eq-Dm} it follows that, for any real polynomial $f(D)$ in $D$,  
\begin{align}\label{eq-D-expression}
f(D)\left(\frac{1}{1-t}\right)
=\sum_{j\geq 0}f(j+1) t^{j},
\end{align}
or equivalently, $f(D)(t^j)=f(j+1)t^j$ for any $j\geq 0$. 
Thus
\begin{align*}
\frac{1}{n!}O^{{(0,1,\ldots,n-2)}}_{n,n-2}(t^j)
=\frac{1}{n!}(n(j+1))\cdots(n(j+1)-n+2)t^j=\frac{1}{nj+1}\binom{nj+n}{n}t^j.
\end{align*}
We proceed to  consider the action of $D_{[n, 1,2,\ldots,n-1]}$ on $t^j$. Note that $[n, 1,2,\ldots,n-1]$ has a reduced decomposition $s_{n-1}\cdots s_2s_1$.
By definition, 
\begin{align*}
D_{[n, 1,2,\ldots,n-1]}(t^j)=\pi_{n-1}\cdots \pi_{2}\pi_{1}\left(x_1^{(n-1)j}x_2^{(n-2)j}\cdots x_{n-2}^{2j}x_{n-1}^jt^j\right)\Big|_{\mathbf{x}=\mathbf{1}}.
\end{align*}
It is well known that  
\begin{align*}
\pi_i (x_i^mx_{i+1}^{m'})=\sum_{l=0}^{m-m'} x_i^{m-l}x_{i+1}^{m'+l}
\end{align*}
holds for $m\geq m'\geq 0$.
Based on this identity and the fact that $\pi_i$ commutes with $x_l$ for any $l\not\in\{i,i+1\}$, 
one can check that  
\begin{align*}
&\pi_{n-1}\cdots \pi_{2}\pi_{1}\bigl(x_1^{(n-1)j}x_2^{(n-2)j}\cdots x_{n-2}^{2j}x_{n-1}^j\bigr)\bigg|_{\mathbf{x}=\mathbf{1}}
\\[5pt]
=&\pi_{n-1}\cdots\pi_{2}\biggl(\pi_{1}\bigl(x_1^{(n-1)j}x_2^{(n-2)j}\bigr)x_3^{(n-3)j}\cdots x_{n-2}^{2j}x_{n-1}^{j}\biggr)\bigg|_{\mathbf{x}=\mathbf{1}}
\\[5pt]
=&\pi_{n-1}\cdots\pi_{2}\biggl(\biggl(\sum_{j_1=0}^{j}x_1^{(n-1)j-j_1}x_2^{(n-2)j+j_1}\biggr)x_3^{(n-3)j}\cdots x_{n-2}^{2j}x_{n-1}^{j}\biggr)\bigg|_{\mathbf{x}=\mathbf{1}}
\\[5pt]
=&\sum_{j_1=0}^{j}\pi_{n-1}\cdots\pi_{2}\biggl(x_2^{(n-2)j+j_1}x_3^{(n-3)j}\cdots x_{n-2}^{2j}x_{n-1}^{j}\biggr)\bigg|_{\mathbf{x}=\mathbf{1}},
\end{align*}
Continuing in this manner, we find that 
\begin{align*}
&\pi_{n-1}\cdots \pi_{2}\pi_{1}\bigl(x_1^{(n-1)j}x_2^{(n-2)j}\cdots x_{n-2}^{2j}x_{n-1}^j\bigr)\bigg|_{\mathbf{x}=\mathbf{1}}
\\[5pt]
=&\sum_{j_1=0}^{j}\pi_{n-1}\cdots \pi_{3}\biggl(\pi_{2}\bigl(x_2^{(n-2)j+j_1}x_3^{(n-3)j}\bigr)x_4^{(n-4)j}\cdots x_{n-2}^{2j}x_{n-1}^{j}\biggr)\bigg|_{\mathbf{x}=\mathbf{1}}
\\[5pt]
=&\sum_{j_1=0}^{j}\pi_{n-1}\cdots \pi_{3}\biggl(\biggl(\sum_{j_2=0}^{j+j_1}x_2^{(n-2)j+j_1-j_2}x_3^{(n-3)j+j_2}\biggr)x_4^{(n-4)j}\cdots x_{n-2}^{2j}x_{n-1}^{j}\biggr)\bigg|_{\mathbf{x}=\mathbf{1}}
\\[5pt]
=&\sum_{j_1=0}^{j}\pi_{n-1}\cdots \pi_{3}\sum_{j_2=0}^{j+j_1}\biggl(x_3^{(n-3)j+j_2}x_4^{(n-4)j}\cdots x_{n-2}^{2j}x_{n-1}^{j}\biggr)\bigg|_{\mathbf{x}=\mathbf{1}}
\\[5pt]
=&\sum_{j_1=0}^{j}\sum_{j_2=0}^{j+j_1}\sum_{j_3=0}^{j+j_2}\cdots\sum_{j_{n-1}=0}^{j+j_{n-2}}1.
\end{align*}
Note that the last term counts the number of nonnegative sequences $(j_0,j_1,j_2,\ldots,j_{n-1})$ satisfying $j_0=0$ and $0\le j_{i+1}\le j_{i}+j$ for any $0\le i\le n-2$. By a result as shown in \cite[P. 5]{Heubach-Li-Mansour-2014}, this number is just $\frac{1}{nj+1}\binom{nj+n}{n}$, and hence
\begin{align*}
D_{[n, 1,2,\ldots,n-1]}(t^j)=\frac{1}{nj+1}\binom{nj+n}{n}t^j.
\end{align*}
This completes the proof.
\end{proof}

The above proof  carries over mutatis mutandis to determining the following series expansion of $\frac{{\mathcal{E}}_{[2,3,\ldots,n,1]}(t)}{(1-t)^{n}}$.
Since our convention for the action of permutations and operators differs from that of Lascoux up to taking inverses, the following result is essentially equivalent to Lascoux's result on $D_{[n,1,\ldots,n-1]}$ \cite{Lascoux-2016}.

\begin{prop}[{\cite[P. 263]{Lascoux-2016}}]\label{thm_w'-1}
For integer $n \geq 2$ we have
\begin{align*}
D_{[2,3,\ldots,n,1]}=\frac{1}{n!}O^{{(0,1,\ldots,n-2)}}_{n,n-2},
\end{align*}
or equivalently
\begin{align*}
\frac{{\mathcal{E}}_{[2,3,\ldots,n,1]}(t)}{(1-t)^{n}}=\sum_{j\geq 0} \frac{1}{nj+1}\binom{nj+n}{n}t^j. 
\end{align*}
\end{prop}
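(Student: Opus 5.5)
We follow the same scheme as in the proof of Proposition~\ref{thm_w}. By \eqref{eq-D-expression} the right-hand operator acts diagonally:
\[
\tfrac{1}{n!}O^{(0,1,\ldots,n-2)}_{n,n-2}(t^j)=\tfrac{1}{nj+1}\binom{nj+n}{n}t^j .
\]
So it suffices to show that, for every $j\ge 0$,
\[
D_{[2,3,\ldots,n,1]}(t^j)=\pi_1\pi_2\cdots\pi_{n-1}\bigl(x_1^{(n-1)j}x_2^{(n-2)j}\cdots x_{n-1}^{j}\bigr)\big|_{\mathbf{x}=\mathbf{1}}\,t^j
\]
equals the same quantity. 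Here we use the reduced decomposition $[2,3,\ldots,n,1]=s_1s_2\cdots s_{n-1}$, so the innermost operator applied is now $\pi_{n-1}$.

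We unwind the operators from the inside out, again using $\pi_i(x_i^mx_{i+1}^{m'})=\sum_{l=0}^{m-m'}x_i^{m-l}x_{i+1}^{m'+l}$ for $m\ge m'$, together with the fact that $\pi_i$ commutes with the other variables.

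First, $\pi_{n-1}$ acts on $x_{n-1}^{j}x_n^0$, producing $\sum_{k_{n-1}=0}^{j}x_{n-1}^{j-k_{n-1}}x_n^{k_{n-1}}$. The factor $x_n^{k_{n-1}}$ is never touched again, since later operators only involve $x_1,\ldots,x_{n-1}$, so it may be set to $1$ immediately.

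Next, $\pi_{n-2}$ acts on $x_{n-2}^{2j}x_{n-1}^{j-k_{n-1}}$. This gives a sum over $0\le k_{n-2}\le j+k_{n-1}$, and the exponent $j-k_{n-1}+k_{n-2}$ on $x_{n-1}$ is then frozen. The hypothesis $m\ge m'$ of the identity holds at every stage, because the exponent on $x_{i}$ is $(n-i)j$ while the exponent on $x_{i+1}$ is at most $(n-i-1)j+j$.

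Continuing, after applying all operators and specializing $\mathbf{x}=\mathbf{1}$, the quantity becomes a nested sum
\[
\sum_{k_{n-1}=0}^{j}\ \sum_{k_{n-2}=0}^{j+k_{n-1}}\cdots\sum_{k_1=0}^{j+k_2}1 .
\]
After relabeling $j_i=k_{n-i}$, this is exactly the nested sum obtained in the proof of Proposition~\ref{thm_w}. It therefore counts the sequences $(j_0,\ldots,j_{n-1})$ with $j_0=0$ and $0\le j_{i+1}\le j_i+j$, and by the cited result of Heubach--Li--Mansour this count is $\frac{1}{nj+1}\binom{nj+n}{n}$.

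The main obstacle is the bookkeeping in the middle step. One must check that the upper limits really are $j+k_{i+1}$ in this direction, so that the nested structure matches that of Proposition~\ref{thm_w} after relabeling. Concretely, the exponent of $x_{i+1}$ just before $\pi_i$ is applied is $(n-i-1)j-k_{i+1}$, and the exponent of $x_i$ is $(n-i)j$, so their difference is $j+k_{i+1}$, as required.

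Alternatively, one could cite Lascoux's computation directly, noting that our left-to-right convention swaps $\sigma$ and $\sigma^{-1}$. The direct computation above avoids relying on that translation. Finally, the equivalent series form follows from \eqref{4}, since $\ell([2,3,\ldots,n,1])=n-1$ gives the denominator $(1-t)^{n}$.
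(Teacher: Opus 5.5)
Your proposal is correct and follows the paper's approach. The paper proves this statement by carrying over the proof of Proposition~\ref{thm_w} mutatis mutandis, and you do exactly that: you unwind $\pi_1\pi_2\cdots\pi_{n-1}$ from $\pi_{n-1}$ inward, track the exponent gaps $j+k_{i+1}$, and use the relabeling $j_i=k_{n-i}$ to reach the same nested sum before applying the Heubach--Li--Mansour count.
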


\begin{rem}\label{rem-PS-operator}
Proposition \ref{thm_w'-1} can also be derived from a result due to Pitman and Stanley \cite{Stanley-Pitman-2002} on the Ehrhart polynomial of the polytope $\Pi_m(\mathbf{x})$, which is defined for arbitrary $\mathbf{x}=(x_1,\ldots,x_m)\in\mathbb{Z}_{\geq 0}^m$ by
$$\Pi_m(\mathbf{x})=\{(y_1,\ldots,y_m)\in\mathbb{R}_{\geq 0}^m \colon y_1+\cdots+y_i\leq x_1+\cdots+x_i \mbox{ for all }1\leq i\leq m\}.$$
Pitman and Stanley \cite[Theorem~13]{Stanley-Pitman-2002} showed that, for $\mathbf{x}=(a,b,\ldots,b)\in \mathbb{N}^m$,   
the Ehrhart polynomial $i(\Pi_m(\mathbf{x}),\,j)$ is given by
\begin{align}\label{eq-abpol}
i(\Pi_m(\mathbf{x}),\,j)=\frac{1}{m!}(ja+1)(j(a+mb)+m)_{{m-1}}.
\end{align}
Postnikov and Stanley \cite[P.~172]{Postnikov-Stanley-2008} pointed out that, for $\lambda=(\lambda_1,\ldots,\lambda_n)$ and $\sigma=s_1s_2\cdots s_{n-1}$, the generalized Gelfand--Tsetlin polytope $\mathcal P_{\lambda,\sigma}$ is given by 
\begin{align*}
\mathcal P_{\lambda,\sigma}=\{(t_1,\ldots,t_{n-1})\in\mathbb R^{n-1}:\lambda_i\ge t_i\ \text{ for $1\leq i\leq n-1$};\ t_1\ge t_2\ge\cdots\ge t_{n-1}\ge \lambda_n\}.
\end{align*}
Let $\mathbf d_\lambda=(\lambda_{n-1}-\lambda_n,\lambda_{n-2}-\lambda_{n-1},\ldots,\lambda_1-\lambda_2)$.
Following their remark \cite[P.~135]{Postnikov-Stanley-2008}, it is routine to show that the polytope $\mathcal P_{\lambda,\sigma}$ is exactly the Pitman--Stanley polytope $\Pi_{n-1}(\mathbf d_\lambda)$ 
by setting $y_i=t_{n-i}-t_{n+1-i}$ for $1\le i\le n-1$, with the convention $t_n=\lambda_{n}$.
It is also known that 
$\kappa_{j\lambda,\sigma}(\mathbf 1)$ is the Ehrhart polynomial $i(\Pi_{n-1}(\mathbf d_\lambda),\,j)$; see \cite[Corollary~15.2]{Postnikov-Stanley-2008}. Applying \eqref{eq-abpol} to the Pitman--Stanley polytope $\Pi_{n-1}(\mathbf d_{\delta_n})$ yields
\begin{align*}
\kappa_{j\delta_n,s_1s_2\cdots s_{n-1}}(\mathbf1)=\frac{1}{(n-1)!}(j+1)(nj+n-1)_{n-2}=\frac{1}{nj+1}\binom{nj+n}{n},
\end{align*}
as desired.
\end{rem}

We proceed to determine the series expansion of $\frac{\PP_{n-1}(t)}{(1-t)^{n}}$, as stated below. 
Although the polynomials $\PP_{n-1}(t)$ have appeared in \cite{Athanasiadis-Douvropoulos-Kalampogia-Evangelinou-2024,Stanley-1997}, such an explicit series expansion seems missing in the literature. Connections of this expansion to $h(\Delta(\NC_W),t)$ will be discussed in the next section.  

\begin{prop}\label{thm-gf-parking}
For any $n\geq 2$ we have
\begin{align*}
\frac{\PP_{n-1}(t)}{{(1-t)^{n}}}=\sum_{j\geq 0}\frac{1}{nj+1}\binom{nj+n}{n}t^j.
\end{align*}
\end{prop}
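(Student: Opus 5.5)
Throughout, write $m=n-1$, so that the goal reads
\begin{align*}
\frac{\PP_{m}(t)}{(1-t)^{m+1}}=\sum_{j\geq 0}\frac{1}{(m+1)j+1}\binom{(m+1)j+m+1}{m+1}t^j .
\end{align*}
We use the standard ``barred word'' device (Stanley's theory of $P$-partitions, in the form used for Eulerian-type descent statistics). By definition, $\PP_m(t)=\sum_{\mathbf a\in\PF_m}t^{m-1-\des(\mathbf a)}$, so we have to show that $\PP_m(t)/(1-t)^{m+1}$ has coefficient of $t^j$ equal to $\frac{1}{(m+1)j+1}\binom{(m+1)j+m+1}{m+1}$. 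This number is a Fuss--Catalan-type count: it equals $\frac{1}{(m+1)(j+1)-m}\binom{(m+1)(j+1)}{m+1}$, i.e.\ $\frac{1}{N-m}\binom{N}{m}\cdot\frac{N}{m+1}$-type expressions with $N=(m+1)(j+1)$. Rather than guess, we work directly with a known closed form: the number of weakly increasing-compatible structures. The cleanest route is through multichains in the partition-lattice picture.

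\textbf{Step 1 (combinatorial expansion).} For a word $\mathbf a$ of length $m$, the expansion
\begin{align*}
\frac{t^{m-1-\des(\mathbf a)}}{(1-t)^{m+1}}=\sum_{j\ge0}\#\{\text{ways to insert bars}\}\,t^j
\end{align*}
is obtained by writing $t^{a}/(1-t)^{b}$ as a count of weak compositions. Concretely, we interpret $\frac{t^{m-1-\des(\mathbf a)}}{(1-t)^{m+1}}$ as the number of sequences $0=c_0\le c_1\le\cdots\le c_{m}\le j$ (an $(m+1)$-fold choice) with strict increase forced at the $m-1-\des(\mathbf a)$ ascent-or-equal positions. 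After reversing $t\mapsto$ ``complement'' (replace $m-1-\des$ by $\des$ via $c_i\mapsto j-c_i$), this becomes the statement that
\begin{align*}
[t^j]\frac{\PP_m(t)}{(1-t)^{m+1}}=\#\{(\mathbf a,\mathbf c)\}
\end{align*}
where $\mathbf a\in\PF_m$ and $\mathbf c$ is a weakly monotone labelling of positions compatible with the descents of $\mathbf a$. Equivalently, it counts pairs $(\mathbf a,\phi)$ with $\phi$ a function from positions to $\{0,\dots,j\}$ giving the standard ``$P$-partition'' bijection. Summing over $\mathbf a$ and trading the pair for a single object, this counts functions $f:[m]\to\mathbb N$ together with a parking-function condition, i.e.\ the number of multichains of length $j$ in $\NC_{m+1}$ (Stanley's result underlying $h(\Delta(\NC_W),t)=\PP$), which by Kreweras/Edelman equals $\frac{1}{m+1}\binom{(j+1)(m+1)}{m}$-type numbers.

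\textbf{Step 2 (closed form).} Using the zeta polynomial of $\NC_{m+1}$, namely $Z(\NC_{m+1},q)=\frac{1}{m+1}\binom{(m+1)q}{m}$, the coefficient of $t^j$ of $h(\Delta,t)/(1-t)^{\mathrm{rank}+1}$ equals $Z(\NC_{m+1},j+2)$ with the appropriate normalisation for the order complex of the \emph{proper part}. We will adjust indices carefully: the order complex of the proper part has dimension $m-2$, so $h/(1-t)^{m}$ gives $Z(j+2)$ and we need one more factor $1/(1-t)$, i.e.\ partial sums. The main obstacle is exactly this bookkeeping: checking that $\sum_{i\le j}\frac{1}{m+1}\binom{(m+1)(i+1)}{m}$ (or the correctly indexed variant) equals $\frac{1}{(m+1)j+1}\binom{(m+1)(j+1)}{m+1}$. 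We will verify this by a telescoping identity: show $\frac{1}{(m+1)j+1}\binom{(m+1)(j+1)}{m+1}-\frac{1}{(m+1)(j-1)+1}\binom{(m+1)j}{m+1}$ equals the summand, first checking small cases ($m=1,2$) and then proving it algebraically, or, if cleaner, using Stanley's direct parking-function description instead and the Heubach--Li--Mansour count already cited in Proposition \ref{thm_w}, which counts sequences $0\le j_{i+1}\le j_i+j$ and directly matches the barred-parking-function objects of Step 1 via the correspondence between barred parking functions and lattice paths.
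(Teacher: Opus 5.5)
Your proposal has a genuine gap in each step; the overall idea can be repaired.

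\textbf{Step 1 does not establish the key link.} The barred-word expansion of $t^{m-1-\des(\mathbf a)}/(1-t)^{m+1}$ is fine. But the sentence ``summing over $\mathbf a$ and trading the pair for a single object, this counts \ldots\ multichains of length $j$ in $\NC_{m+1}$'' is asserted without any bijection. What it silently uses is Stanley's theorem that $h(\Delta(\NC_{m+1}),t)=\sum_{\mathbf a\in\PF_m}t^{m-1-\des(\mathbf a)}=\PP_m(t)$, which comes from his labelling of the maximal chains of the noncrossing partition lattice of $[m+1]$ by parking functions. Note that the statistic has to be $m-1-\des$; for example, $h(\Delta(\NC_3),t)=1+2t$, whereas $\sum_{\mathbf a\in\PF_2}t^{\des(\mathbf a)}=2+t$. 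If you cite this theorem explicitly, the $P$-partition discussion is unnecessary; if you do not, Step 1 proves nothing. Likewise, your fallback through the Heubach--Li--Mansour count would need an explicit bijection between your barred parking functions and sequences with $0\le j_{i+1}\le j_i+j$, and you do not give one.

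\textbf{Step 2 is the more serious problem: the index bookkeeping is wrong and the identity you plan to verify is false.} For $m=1$ your partial sum is $\sum_{i\le j}(i+1)=\binom{j+2}{2}$, while the target is $\frac{1}{2j+1}\binom{2j+2}{2}=j+1$. Correspondingly, your ``telescoping'' difference equals $1$, not the summand $j+1$. No partial summation is needed. In the paper's normalisation, $\sum_{j\ge 0}\mathcal Z(\NC_{m+1},j)t^j=h(\Delta(\NC_{m+1}),t)/(1-t)^{m+1}$, because the exponent is rank${}+1=m+1$. Here $\mathcal Z(\NC_{m+1},j)$ is the number of $j$-element multichains, which by Kreweras's formula is $\frac{1}{m+1}\binom{(m+1)(j+1)}{m}$, and this agrees termwise with the target:
\begin{align*}
\frac{1}{m+1}\binom{(m+1)(j+1)}{m}=\frac{((m+1)(j+1))!}{(m+1)!\,((m+1)j+1)!}=\frac{1}{(m+1)j+1}\binom{(m+1)j+m+1}{m+1}.
\end{align*}

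\textbf{Comparison with the paper.} With this correction, ``Stanley's $h$-vector theorem plus Kreweras's formula'' is a valid two-line proof. It is a different route from the paper's, which uses neither input. The paper groups parking functions by the type of their non-increasing rearrangement and applies MacMahon's formula $\prod_i\binom{j+1}{m_i}$. It then inserts Stanley's count of non-increasing parking functions of a given type and rewrites the sum over weak compositions of $n-1$ into $n$ parts. Finally, the multinomial theorem for falling factorials collapses this to $\frac{1}{n!}(n(j+1))_{n-1}$. The paper's route is elementary, and it is reused essentially verbatim for prime parking functions. Your route instead identifies the series directly as the zeta-polynomial series of $\NC_{m+1}$, at the cost of two deeper results about that lattice.
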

\begin{proof} 
Given a multiset $M = \{1^{m_1},\dots,n^{m_n}\}$, let $m=m_1+\cdots+m_n$, let $\mathfrak{S}_M$ denote the set of permutations of $M$ and let
\begin{align}\label{eq-gf-multiset}
\mathcal{A}_M(t):=\sum_{\sigma\in \mathfrak{S}_M}t^{m-1-\des(\sigma)}.
\end{align}
It is well known that 
\begin{align}\label{MacMahon-generating}
    \frac{\mathcal{A}_M(t)}{(1-t)^{m+1}}=\sum_{j\geq 0}\prod_{i=1}^{n}\frac{1}{m_i!}\left(j+1\right)_{m_i}t^{j}.
\end{align}

Let $\PF_n^\downarrow$ denote the set of non-increasing parking functions of length $n$. For each $\mathbf{b}\in \PF_n^\downarrow$, the type of $\mathbf{b}$, denoted by $\mathrm{type}(\mathbf{b})$, is defined to be the non-increasing rearrangement of  $(m_1(\mathbf{b}),m_2(\mathbf{b}),\ldots,m_n(\mathbf{b}))$, where $m_i(\mathbf{b})$ denotes the number of parts of $\mathbf{b}$ equal to $i$.  
As shown by Stanley \cite{Stanley-1997}, for a partition $\lambda\vdash n$ with $m_i(\lambda)$ $i$'s for all $i$, we have 
\begin{align}\label{eq-pf-type}
\#\left \{ \mathbf{b} \in \PF_n^{\downarrow} ~\big|~\type(\mathbf{b})=\lambda \right \}
=
\frac{1}{n+1}\binom{n+1}{\ell(\lambda)}\binom{\ell(\lambda)}{m_1(\lambda),m_2(\lambda),\ldots,m_n(\lambda)}.
\end{align}
Based on \eqref{eq-gf-parkingfunction}, \eqref{MacMahon-generating} and 
\eqref{eq-pf-type}, 
we obtain 
\begin{align*}
\frac{\PP_{n-1}(t)}{{(1-t)^{n}}}
=&\sum_{\mathbf{b}\in \PF_{n-1}^\downarrow}\sum_{j\geq 0}\prod_{i=1}^{n-1}\frac{\left(j+1\right)_{m_i(\mathbf{b})}}{m_i(\mathbf{b})!}t^{j}
=\sum_{\lambda\vdash n-1}\sum_{\genfrac{}{}{0pt}{2}{\mathbf{b}\in \PF_{n-1}^\downarrow}{\mathrm{type}(\mathbf{b})=\lambda}}\sum_{j\geq 0} \prod_{i=1}^{n-1} \frac{\left(j+1\right)_{\lambda_i}}{\lambda_i!}t^{j}\\%
&=\sum_{\lambda\vdash n-1}
\frac{1}{n}\binom{n}{\ell(\lambda)}\binom{\ell(\lambda)}{m_1(\lambda),m_2(\lambda),\ldots,m_{n-1}(\lambda)}
\sum_{j \geq 0}\prod_{i=1}^{n-1} \frac{\left(j+1\right)_{\lambda_i}}{\lambda_i!}t^{j}\\[5pt]
&=\sum_{j \geq 0}t^{j}\left(\sum_{\lambda\vdash n-1}
\frac{1}{n}\binom{n}{\ell(\lambda)}\binom{\ell(\lambda)}{m_1(\lambda),m_2(\lambda),\ldots,m_{n-1}(\lambda)}
\prod_{i=1}^{n-1} \frac{\left(j+1\right)_{\lambda_i}}{\lambda_i!}\right).
\end{align*}
For a fixed partition $\lambda\vdash n-1$, the factor
$$\binom{n}{\ell(\lambda)}\binom{\ell(\lambda)}{m_1(\lambda),m_2(\lambda),\ldots,m_{n-1}(\lambda)}$$
counts the number of weak compositions $(\alpha_1,\ldots,\alpha_n)$ of $n-1$ of length $n$ whose non-increasing rearrangement is $\lambda$. As usual, we write $(\alpha_1,\ldots,\alpha_n)\vDash m$ to mean that $(\alpha_1,\ldots,\alpha_n)$ is a weak composition of $m$. Thus, we have 
\begin{align*}
\frac{\PP_{n-1}(t)}{{(1-t)^{n}}}
&=\sum_{j \geq 0}t^{j}\left(\sum_{(\alpha_1,\ldots,\alpha_n)\vDash n-1}
\frac{1}{n!}\binom{n-1}{\alpha_1,\ldots,\alpha_n}\prod_{i=1}^{n}{\left(j+1\right)_{\alpha_i}}\right)\\[5pt]
&=\sum_{j\geq 0}\frac{1}{n!}(n(j+1))_{n-1} t^j=\sum_{j\geq 0}\frac{1}{nj+1}\binom{nj+n}{n}t^j,
\end{align*}
where the second equality follows from  the multinomial theorem for falling factorial powers (see \cite[Fact 1]{Creed-Cryan-2010} for instance)
\begin{align*}
(x_1+\cdots+x_l)_m=\sum_{(\alpha_1,\,\ldots,\,\alpha_l)\vDash m}\binom{m}{\alpha_1,\ldots,\alpha_l}\prod_{i=1}^l (x_i)_{\alpha_i}.
\end{align*}
This completes the proof. 
\end{proof}

We are now able to give a proof of Theorem \ref{main-thm-com-in}.

\begin{proof}[Proof of Theorem \ref{main-thm-com-in}]
The assertion follows from \eqref{eq_three equalities}, which clearly holds by Propositions \ref{thm_w}, \ref{thm_w'-1} and \ref{thm-gf-parking}.
\end{proof}

Finally, we would like to point out that Propositions \ref{thm_w} and \ref{thm_w'-1} actually confirm some special cases of Alexandersson and Alhajjar's conjecture
\cite[P.~11]{Alexandersson-Alhajjar-2019}, which asserts that $\kappa_{j\lambda,\sigma}(\mathbf{1})\in\mathbb{Q}_{\ge 0}[j]$ for each partition $\lambda$ with at
most $n$ parts and each permutation $\sigma\in\mathfrak{S}_n$. In fact, for $\sigma=[n,1,2,\ldots,n-1]$ or $\sigma=[2,3,\ldots,n,1]$, we have
\begin{align*}
\kappa_{j\delta_n,\sigma}(\mathbf{1})=\frac{1}{nj+1}\binom{nj+n}{n}\in\mathbb{Q}_{\ge 0}[j].
\end{align*}
Recently, Jochemko and Menon \cite[P.~6]{Jochemko-Menon-2026} confirmed Alexandersson and Alhajjar's conjecture for any
$312$-avoiding permutation $\sigma$ and any partition $\lambda$.

\section{Operational interpretation of polynomials} \label{sec 3}
In the preceding section we obtained an operational interpretation of $\PP_n(t)$. 
Precisely, we have 
\begin{align}\label{eq-ppn-gnmv-def}
\PP_{n}(t)=\frac{g^{{(0,1,\ldots,n-1)}}_{n+1,n-1}(t)}{(n+1)!}
\end{align}
by \eqref{eq:defi-fnk(x)}, Theorem \ref{main-thm-com-in} and Proposition \ref{thm_w}. 
The aim of this section is to show that both the non-descent generating polynomials over prime parking functions 
and the $h$-polynomials of order complexes of $k$-divisible noncrossing partition posets associated to the irreducible Coxeter groups  
also appear as the polynomial $g^{\bf{v}}_{n,m}(t)$ up to some scalar. 
This change of viewpoint turns out to be critical for proving Theorems \ref{que:ADKE_5.5(b)}, \ref{PPF-RZ} and \ref{interlacing-sd}.
At the end of this section, we also provide an operational interpretation of some Eulerian-type polynomials. 

Let us first consider the non-descent generating polynomial
$\PPP_n(t)$ over prime parking functions. We have the following result. 

\begin{prop}\label{prop-ppf-operator}
For any integer $n\ge 2$, we have
\begin{align}\label{eq-ppn-gnmv-def-prime}
\PPP_n(t)=\frac{g^{(0,1,\ldots,n-1)}_{n-1,n-1}(t)}{(n-1)\cdot n!}.
\end{align}
\end{prop}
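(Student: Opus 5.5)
Following the proof of Proposition \ref{thm-gf-parking}, I will compare coefficients of $t^j$ in the series $\PPP_n(t)/(1-t)^{n+1}$ and $g^{(0,1,\ldots,n-1)}_{n-1,n-1}(t)/(1-t)^{n+1}$. By \eqref{eq-D-expression},
\begin{align*}
\frac{g^{(0,1,\ldots,n-1)}_{n-1,n-1}(t)}{(n-1)\cdot n!\,(1-t)^{n+1}}=\sum_{j\ge0}\frac{((n-1)(j+1))_n}{(n-1)\cdot n!}\,t^j .
\end{align*}
So it suffices to show
\begin{align*}
\frac{\PPP_n(t)}{(1-t)^{n+1}}=\sum_{j\ge0}\frac{1}{(n-1)\, n!}\bigl((n-1)(j+1)\bigr)_n\, t^j .
\end{align*}
(One must also check that the left side of \eqref{eq:defi-fnk(x)} is in lowest terms with denominator exponent $m+2=n+1$. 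Since $\PPP_n(1)=|\PPF_n|=(n-1)^{n-1}\neq0$, this should be harmless.)

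\emph{Step 1.} Group prime parking functions by their non-increasing rearrangement $\mathbf b$. Every rearrangement of a prime parking function is again prime, so the descents over the rearrangements of $\mathbf b$ are the descents over $\mathfrak S_M$ for the multiset $M$ of $\mathbf b$. Applying \eqref{MacMahon-generating}, with $m=n$, gives
\begin{align*}
\frac{\PPP_n(t)}{(1-t)^{n+1}}=\sum_{j\ge0}t^j\sum_{\mathbf b}\prod_i\frac{(j+1)_{m_i(\mathbf b)}}{m_i(\mathbf b)!}.
\end{align*}
Only the type of $\mathbf b$ enters this product.

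\emph{Step 2 (main obstacle).} I need the analogue of Stanley's type count \eqref{eq-pf-type} for prime parking functions. I expect
\begin{align*}
\#\{\mathbf b\in\PPF_n^\downarrow:\type(\mathbf b)=\lambda\}=\frac{1}{n-1}\binom{n-1}{\ell(\lambda)}\binom{\ell(\lambda)}{m_1(\lambda),m_2(\lambda),\ldots}.
\end{align*}
I will derive this by the cycle lemma, as in the classical argument. Prime parking functions of length $n$ correspond to sequences in $\mathbb Z_{n-1}^n$, with exactly $n-1$ cyclic rotations of the values being prime. At the level of weak compositions $(c_1,\ldots,c_{n-1})$ of $n$ recording multiplicities, exactly one of the $n-1$ cyclic shifts satisfies the prime condition $c_1+\cdots+c_i\ge i+1$ for all $i\le n-2$. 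This is a cycle-lemma statement for compositions of $n$ into $n-1$ parts with excess $1$, and I would verify it by the standard lowest-point argument. The number of such compositions with sorted type $\lambda$ is $\binom{n-1}{\ell(\lambda)}\binom{\ell(\lambda)}{m_1(\lambda),\ldots}$, and dividing by $n-1$ gives the formula. As a sanity check, I would confirm that summing over $\lambda$ gives the known total $(n-1)^{n-1}$.

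\emph{Step 3.} Substituting the count and converting the sum over $\lambda$ into a sum over weak compositions $(\alpha_1,\ldots,\alpha_{n-1})\vDash n$, the coefficient of $t^j$ becomes
\begin{align*}
\frac{1}{n-1}\sum_{\alpha}\prod_i\frac{(j+1)_{\alpha_i}}{\alpha_i!}=\frac{1}{(n-1)\,n!}\sum_\alpha\binom{n}{\alpha}\prod_i(j+1)_{\alpha_i}.
\end{align*}
By the falling-factorial multinomial theorem (Creed--Cryan) with $l=n-1$ variables all equal to $j+1$, this equals $\frac{((n-1)(j+1))_n}{(n-1)\,n!}$, which is the required coefficient.
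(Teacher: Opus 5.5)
Your proposal is correct and follows the paper's proof essentially step for step: you group by the non-increasing rearrangement, apply \eqref{MacMahon-generating}, insert the type count, pass to weak compositions of $n$ with $n-1$ parts, and finish with the falling-factorial multinomial theorem. The only difference is that the paper cites Novelli--Thibon for the count $\frac{1}{n-1}\binom{n-1}{\ell(\lambda)}\binom{\ell(\lambda)}{m_1(\lambda),m_2(\lambda),\ldots}$, whereas you derive it from the cycle lemma applied to $(c_1-1,\ldots,c_{n-1}-1)$ (integer entries summing to $1$), which is valid and makes the proof self-contained.
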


\begin{proof} 
We give a proof along the lines of that of Proposition \ref{thm-gf-parking}.
Let $\PPF_n^\downarrow$ denote the set of non-increasing prime parking functions of length $n$.
For an integer  partition
$\lambda\vdash n$ with $m_i(\lambda)$ parts equal to $i$, Novelli and Thibon \cite[Eq. (286)]{Novelli-Thibon-2007} showed that
$$\#\left \{ \mathbf{b} \in \PPF_n^{\downarrow} ~\big|~\type( \mathbf{b})=\lambda \right \}
=\frac{1}{n-1}\binom{n-1}{\ell(\lambda)}\binom{\ell(\lambda)}{m_1(\lambda),m_2(\lambda),\ldots,m_n(\lambda)}.$$
Together with \eqref{MacMahon-generating}, this leads to 
    \begin{align*}
\frac{\PPP_n(t)}{(1-t)^{n+1}}
&=\sum_{\lambda\vdash n}
\frac{1}{n-1}\binom{n-1}{\ell(\lambda)}\binom{\ell(\lambda)}{m_1(\lambda),m_2(\lambda),\ldots,m_n(\lambda)}
\sum_{j \geq 0}\prod_{i=1}^{n-1}\frac{\left(j+1\right)_{\lambda_i}}{\lambda_i!}t^j\\[5pt]
&=\sum_{j \geq 0}t^j\left(\sum_{\lambda\vdash n}
\frac{1}{n-1}\binom{n-1}{\ell(\lambda)}\binom{\ell(\lambda)}{m_1(\lambda),m_2(\lambda),\ldots,m_n(\lambda)}
\prod_{i=1}^{n-1}\frac{\left(j+1\right)_{\lambda_i}}{\lambda_i!}\right).
\end{align*}
{As before, the factor 
$$\binom{n-1}{\ell(\lambda)}\binom{\ell(\lambda)}{m_1(\lambda),m_2(\lambda),\ldots,m_n(\lambda)}$$
counts the number of weak compositions $(\alpha_1,\ldots,\alpha_{n-1})$ of $n$ of length $n-1$ whose non-increasing rearrangement is $\lambda$.} Taking this into account and using the multinomial theorem for falling factorial powers again, we obtain
\begin{align*}
\frac{\PPP_n(t)}{(1-t)^{n+1}}
&=\sum_{j \geq 0}t^{j}\left(\sum_{(\alpha_1,\ldots,\alpha_{n-1})\vDash n}
\frac{1}{(n-1)n!}\binom{n}{\alpha_1,\ldots,\alpha_{n-1}}\prod_{i=1}^{n-1}{\left(j+1\right)_{\alpha_i}}\right)\\[5pt]
&=\frac{1}{(n-1)n!}\sum_{j \geq 0}\big((n-1)(j+1)\big)_{n}t^j.
\end{align*}
In view of \eqref{eq:defi-O_nk} and \eqref{eq-D-expression}, we get
\begin{align*}
\frac{\PPP_n(t)}{(1-t)^{n+1}}&=\frac{ O^{(0,1,\ldots,n-1)}_{n-1,n-1}}{(n-1)n!}\left(\frac{1}{1-t}\right),
\end{align*}
which gives the desired result by \eqref{eq:defi-fnk(x)}. 
\end{proof}

Next we interpret the $h$-polynomial $h(\Delta(\NC^{(k)}_W),t)$ associated to any finite irreducible Coxeter group $W$ 
as the polynomial $g^{\bf{v}}_{n,m}(t)$.  
Before proceeding to the proof, let us recall some related definitions and results. 
Given a finite poset $P$, let $\Delta(P)$ denote the order complex of $P$, which is an abstract simplicial complex by taking its elements as vertices and its chains as faces.
Suppose that the length of the longest chain in $P$ is $n-1$.
The $h$-polynomial of $\Delta(P)$ is given by
\begin{align*}
h(\Delta(P),t) := \sum_{i=0}^{n} f_{i-1}(\Delta(P))\, t^i (1-t)^{n-i},
\end{align*}
where $f_{i-1}(\Delta(P))$ counts the number of $(i-1)$-dimensional faces of $\Delta(P)$. Following \cite{Athanasiadis-Douvropoulos-Kalampogia-Evangelinou-2024} we let $\mathcal{Z}(P,t)$ denote the zeta polynomial of $P$ defined by letting  $\mathcal{Z}(P,0)=1$ and for $\ell\geq 1$ letting $\mathcal{Z}(P,\ell)$ count the number of multichains of length $\ell-1$. 
It is known that
\begin{align}\label{eq-h-link-zeta}
    \sum_{j\ge 0}\mathcal{Z}(P,j)t^j=\frac{h(\Delta(P),t)}{(1-t)^n}.
\end{align}
Assume that $W$ is a finite irreducible Coxeter group of rank $r_W=n$. 
Armstrong \cite[Theorem 3.6.9]{Armstrong-2009} proved that the zeta polynomial of $NC^{(k)}_W$ is given by 
\begin{align}\label{eq-irr-zeta-pol}
\mathcal{Z}(\NC^{(k)}_W,m)=\frac{1}{|W|}\prod_{i=1}^n (kmh+d_i)=\frac{1}{|W|}\prod_{i=1}^n ((m+1)kh-(kh-d_i)),
\end{align}
where $h$ is the Coxeter number of $W$ and $d_1,d_2,\ldots,d_n$ are its degrees. 
Combining \eqref{eq:defi-fnk(x)}, \eqref{eq-D-expression}, \eqref{eq-h-link-zeta} and \eqref{eq-irr-zeta-pol}, we  obtain the following result. 

\begin{prop}\label{prop-hpol-operator}
Given an irreducible Coxeter group $W$ with Coxeter number $h$ and rank $n$, let $d_1,d_2,\ldots,d_n$ be its degrees with $0<d_1\le d_2\le \cdots \le d_{n-1}\le d_n=h$.
Then   
\begin{align*}
h\left(\Delta\left(\NC^{(k)}_W\right),t\right)=\frac{1}{|W|}g^{(kh-d_n,kh-d_{n-1},\ldots,kh-d_1)}_{kh,n-1}(t).
\end{align*}
\end{prop}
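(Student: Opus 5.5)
The plan is to compare the two sides as power series, using the identity \eqref{eq-h-link-zeta} for $P=\NC^{(k)}_W$ together with the operator formula \eqref{eq-D-expression}. First we record the length data. $\NC^{(k)}_W$ is graded of rank $n$, so its longest chain has $n+1$ elements and the exponent in \eqref{eq-h-link-zeta} is $n+1$. Hence
\begin{align*}
\frac{h(\Delta(\NC^{(k)}_W),t)}{(1-t)^{n+1}}=\sum_{j\ge0}\mathcal{Z}(\NC^{(k)}_W,j)\,t^j .
\end{align*}
On the other side, with $\mathbf v=(kh-d_n,\ldots,kh-d_1)$ and $m=n-1$, definition \eqref{eq:defi-fnk(x)} reads $O^{\mathbf v}_{kh,n-1}(\frac1{1-t})=g^{\mathbf v}_{kh,n-1}(t)/(1-t)^{n+1}$, and the exponents $m+2=n+1$ match.

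Next we evaluate the operator side. By \eqref{eq-D-expression} with $f(D)=\prod_{i=1}^n(khD-(kh-d_i))$, we get
\begin{align*}
O^{\mathbf v}_{kh,n-1}\Bigl(\frac{1}{1-t}\Bigr)=\sum_{j\ge0}\prod_{i=1}^n\bigl(kh(j+1)-(kh-d_i)\bigr)t^j .
\end{align*}
By the second form of \eqref{eq-irr-zeta-pol}, this is $|W|\sum_{j\ge0}\mathcal{Z}(\NC^{(k)}_W,j)t^j$. Including $j=0$ is legitimate, since the product at $j=0$ equals $\prod_i d_i=|W|$, which is consistent with $\mathcal{Z}(P,0)=1$. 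Comparing with the first display gives $|W|\,h(\Delta(\NC^{(k)}_W),t)=g^{\mathbf v}_{kh,n-1}(t)$.

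The only subtle point is the phrase ``written in lowest terms'' in \eqref{eq:defi-fnk(x)}. We must check that $g^{\mathbf v}_{kh,n-1}$ is exactly the numerator over $(1-t)^{n+1}$ and not a reduced one. To do this, note that the coefficient sequence is a polynomial in $j$ of degree exactly $n$, with leading coefficient $(kh)^n\neq0$. Its generating function therefore has a pole of order exactly $n+1$ at $t=1$, so the numerator over $(1-t)^{n+1}$ does not vanish at $t=1$ and no cancellation occurs. This is the main, if mild, obstacle; it also confirms that the $h$-polynomial has degree at most $n$, consistent with $g$ having degree at most $m+1=n$.
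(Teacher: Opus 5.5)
Your proposal is correct and is essentially the paper's own argument: the paper obtains this proposition by combining \eqref{eq:defi-fnk(x)}, \eqref{eq-D-expression}, \eqref{eq-h-link-zeta} and the second form of \eqref{eq-irr-zeta-pol}, exactly as you do. Your two additional checks are correct details that the paper leaves implicit: the $j=0$ term is consistent because $\prod_i d_i=|W|$, and the leading coefficient $(kh)^n\neq 0$ means $(1-t)^{n+1}$ is already the lowest-terms denominator.
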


Finally, we interpret some Eulerian-type polynomials as $g^{\mathbf v}_{n,m}(t)$, including the $r$-colored Eulerian polynomials \cite{Steingrimsson-1994} and the $(\operatorname{des}_B,\operatorname{neg})$
$q$-Eulerian polynomials over the hyperoctahedral group \cite{Brenti-1994}. Let $\{f_n(t)\}_{n\ge0}$ be a sequence of real polynomials with $\deg(f_n(t))=n$ and $f_0(t)=1$ satisfying
\begin{align}\label{eq_Eulerian recurrence}
f_{n+1}(t)=((\alpha n+\beta)t+\gamma)f_n(t)+\alpha t(1-t)f_n'(t),
\end{align}
where $\alpha,\beta,\gamma$ are nonnegative integers. This recurrence has been studied in
\cite{Liu-Liu-Ma-Zhang-2025}. In particular, as noted in \cite[Sections 3.2 and 3.3]{Liu-Liu-Ma-Zhang-2025}, the $r$-colored Eulerian polynomials $\mathcal{A}_{n,r}(t)$ correspond to $(\alpha,\beta,\gamma)=(r,r-1,1)$, while the $(\operatorname{des}_B,\operatorname{neg})$ $q$-Eulerian polynomials $\mathcal{B}_n(t,q)$ correspond to $(\alpha,\beta,\gamma)=(1+q,q,1)$. We have the following result. 
\begin{prop}\label{eq-eulerianpol-operator} 
Let $(f_n(t))_{n\ge 0}$ be the polynomial sequence defined by \eqref{eq_Eulerian recurrence}. If $\alpha=\beta+\gamma$, then 
$f_n(t)=g^{(\beta,\ldots,\beta)}_{\alpha,n-1}(t)$; if $2\alpha=\beta+\gamma$, then $f_n(t)=\alpha^{-1}g^{(0,\alpha-\gamma,\ldots,\alpha-\gamma)}_{\alpha,n}(t)$. 
In particular, we have 
\begin{align}\label{eq-AB-interpret}
 \mathcal{A}_{n,r}(t)=g^{(r-1,\ldots,r-1)}_{r,n-1}(t),\qquad \mathcal{B}_n(t,q)=g^{(q,\ldots,q)}_{1+q,n-1}(t).   
\end{align}
\end{prop}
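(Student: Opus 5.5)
The plan is to compare both sides through their generating series and use \eqref{eq-D-expression}. By \eqref{eq:defi-fnk(x)} and \eqref{eq-D-expression}, for any $\mathbf v=(v_0,\ldots,v_m)$ we have
\begin{align*}
\frac{g^{\mathbf v}_{n,m}(t)}{(1-t)^{m+2}}=\sum_{j\ge0}\prod_{i=0}^{m}\bigl(n(j+1)-v_i\bigr)t^j .
\end{align*}
Hence it suffices to find the series $\sum_{j\ge0}f_n(t)\,t^j$-type expansion of $f_n(t)/(1-t)^{n+1}$ from the recurrence \eqref{eq_Eulerian recurrence}, and then match it with the appropriate product.

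The first step is to prove by induction on $n$ that, for the sequence defined by \eqref{eq_Eulerian recurrence},
\begin{align*}
\frac{f_n(t)}{(1-t)^{n+1}}=\sum_{j\ge0}(\alpha j+\gamma)^n t^j .
\end{align*}
For $n=0$ this is just $1/(1-t)$. For the inductive step, set $F_n(t)=\sum_{j}(\alpha j+\gamma)^nt^j$, so that $F_{n+1}=(\alpha t\frac{\mathrm d}{\mathrm dt}+\gamma)F_n$. Substitute $F_n=f_n(1-t)^{-n-1}$ and differentiate:
\begin{align*}
\alpha tF_n'+\gamma F_n=\frac{\alpha t(1-t)f_n'+\alpha(n+1)tf_n+\gamma(1-t)f_n}{(1-t)^{n+2}} .
\end{align*}
The numerator equals $\bigl((\alpha n+\alpha-\gamma)t+\gamma\bigr)f_n+\alpha t(1-t)f_n'$. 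This agrees with the recurrence precisely when $\beta=\alpha-\gamma$, i.e.\ $\alpha=\beta+\gamma$.

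So the main obstacle is the case $2\alpha=\beta+\gamma$, where this computation does not close. Here I would instead try $F_n=\sum_j (j+1)(\alpha j+\gamma')^{n}t^j$ with a suitable shift $\gamma'$, or more generally a product with one extra linear factor. The extra factor $(j+1)$ raises the power of $(1-t)$ by one, which matches the denominator exponent $n+2$ in the second claim. Redoing the derivative computation then gives a numerator recurrence with $\beta$ increased by $\alpha$, which corresponds to $\beta+\gamma=2\alpha$. I would fix $\gamma'$ by comparing constant terms and then check the recurrence; the ansatz $(j+1)(\alpha(j+1)-(\alpha-\gamma))^n$ appears to be the right one.

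Matching with the operator series is then immediate in each case.

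\emph{Case $\alpha=\beta+\gamma$.} We have $\alpha j+\gamma=\alpha(j+1)-\beta$, so $F_n=g^{(\beta,\ldots,\beta)}_{\alpha,n-1}/(1-t)^{n+1}$, with $m=n-1$ and $n$ factors.

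\emph{Case $2\alpha=\beta+\gamma$.} Write $(j+1)=\alpha^{-1}\bigl(\alpha(j+1)-0\bigr)$. This gives $\alpha^{-1}g^{(0,\alpha-\gamma,\ldots,\alpha-\gamma)}_{\alpha,n}$ with $m=n$.

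In both cases the numerators agree with $f_n$, provided the fractions are in lowest terms. If they were not, the identity still holds as polynomials of degree at most $m+1$; I would simply note that the value $f_n(1)\neq0$ (it is the leading coefficient up to a factorial) to justify the lowest-terms claim.

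Finally, specialize. For $\mathcal A_{n,r}$ we have $(\alpha,\beta,\gamma)=(r,r-1,1)$, and $\alpha=\beta+\gamma$ holds, giving $g^{(r-1,\ldots,r-1)}_{r,n-1}$. For $\mathcal B_n(t,q)$ we have $(1+q,q,1)$, again with $\alpha=\beta+\gamma$, giving $g^{(q,\ldots,q)}_{1+q,n-1}$. Both are consistent with the series quoted in the introduction.
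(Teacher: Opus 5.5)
Your proposal is correct and is essentially the paper's argument: your operator $\alpha t\frac{\mathrm{d}}{\mathrm{d}t}+\gamma$ is exactly $\alpha D-(\alpha-\gamma)$, and your two derivative computations are the paper's identity $(\alpha D-(\alpha-\gamma))\frac{f_n(t)}{(1-t)^{n+\rho}}=\frac{f_{n+1}(t)}{(1-t)^{n+\rho+1}}$ with $\rho=(\beta+\gamma)/\alpha$, specialized to $\rho=1$ and $\rho=2$. In the second case your extra factor $(j+1)$ plays the role of $\frac{1}{(1-t)^2}=D\bigl(\frac{1}{1-t}\bigr)$, i.e.\ the entry $v_0=0$; working with coefficient sequences instead of $D$ is only cosmetic, and your ansatz $(j+1)(\alpha j+\gamma)^n$ is the right one.
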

\begin{proof}
Assume that $\alpha$ divides $\beta+\gamma$, and write $\rho=(\beta+\gamma)/\alpha$. Then the recurrence 
\eqref{eq_Eulerian recurrence} can be written as
\begin{align*}
 (\alpha D-(\alpha-\gamma))\frac{f_n(t)}{(1-t)^{n+\rho}}=\frac{f_{n+1}(t)}{(1-t)^{n+\rho+1}},  
\end{align*}
from which one can directly verify the desired result for $\rho=1$ or $\rho=2$. 
\end{proof}

\section{Real zeros and coefficient inequalities}\label{sec 4}

Given $n\geq 1, m\geq 0$ and $\mathbf{v} = (v_0, v_1, \ldots, v_m)\in \mathbb{R}_{\geq 0}^{m+1}$, let $g^{\mathbf{v}}_{n,m}(t)$ be the polynomial defined by 
\eqref{eq:defi-fnk(x)}. 
Suppose that 
\begin{align}\label{eq-g-expansion}
g^{\mathbf{v}}_{n,m}(t) = \sum_{j=0}^{m+1} c^{\mathbf{v}}_{n,m,j} t^j.
\end{align}
The aim of this section is to explore the analytic property of $g^{\mathbf{v}}_{n,m}(t)$ and to prove Theorems \ref{que:ADKE_5.5(b)} and \ref{PPF-RZ}.
To this end, let us first recall a result on the preservation of real-rootedness under Hadamard
products of the formal power series. For two formal power series $\mathfrak{A}(t)=\sum_{j\geq 0}a_jt^j$ and $\mathfrak{B}(t)=\sum_{j\geq 0}b_jt^j$ with real coefficients, their Hadamard product is defined by $(\mathfrak{A}\star \mathfrak{B})(t)=\sum_{j\geq 0}a_jb_jt^j$.
For any polynomial $q(t)\in \mathbb{R}[t]$, there exists a real polynomial $\mathcal{W}(q)(t)$ of degree at most $\deg(q(t))$, which is defined as the numerator of the rational generating function
\begin{align*}
	\sum_{j\ge 0}q(j)t^j=\frac{\mathcal{W}(q)(t)}{(1-t)^{\deg(q(t))+1}}.
\end{align*}
For $q_1(t),q_2(t)\in\mathbb{R}[t]$, the polynomial $\mathcal W(q_1q_2)(t)$ is the numerator
of the Hadamard product of $\sum_{j\geq0}q_1(j)t^j$ and $\sum_{j\geq0}q_2(j)t^j$.
The following basic result was obtained by Wagner \cite{Wagner-1992}.

\begin{thm}[{\cite[Theorem 0.2]{Wagner-1992}}]\label{wagner-thm}
    If $\mathcal{W}(q_1)(t)$ and $\mathcal{W}(q_2)(t)$ have only nonpositive real zeros, so does $\mathcal{W}(q_1q_2)(t)$.
\end{thm}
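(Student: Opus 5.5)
The plan is to recast $\mathcal{W}$ as a bilinear "product" on numerator polynomials and prove that this product preserves the property of having only nonpositive real zeros.

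Write $d_i=\deg q_i$ and $A(t)=\mathcal{W}(q_1)(t)$, $B(t)=\mathcal{W}(q_2)(t)$.

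\textbf{Step 1: recover $q_i$ from the numerators.} Since $\sum_{j\ge0}\binom{j+d-i}{d}t^j=t^i/(1-t)^{d+1}$, expanding $q_i$ in the basis $\binom{x+d_i-s}{d_i}$, $0\le s\le d_i$, has the coefficients of $\mathcal{W}(q_i)$ as coordinates. So $A$ and $B$ determine $q_1$ and $q_2$ linearly.

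\textbf{Step 2: pass to a variable where the product is a derivative formula.} It is convenient to replace $A$ by
\[
\tilde A(z)=(1+z)^{d_1}A\bigl(z/(1+z)\bigr),
\]
and similarly $\tilde B$ with $d_2$. Under $t=z/(1+z)$ the ray $(-\infty,0]$ in $t$ corresponds to $[-1,0]$ in $z$. So the hypothesis becomes: $\tilde A$ and $\tilde B$ have all zeros real and in $[-1,0]$. I will have to check the endpoint $t\to-\infty$, which corresponds to $z=-1$ and to degree drops of $A$, separately.

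In this variable $\tilde A(z)=\sum_k \Delta^k q_1(0)\,z^k$, the forward-difference (Newton) expansion of $q_1$. The Vandermonde-type identity
\[
\binom{x}{a}\binom{x}{b}=\sum_k \frac{(a+b-k)!}{(a-k)!\,(b-k)!\,k!}\binom{x}{a+b-k}
\]
should then yield a closed form for the transform of $q_1q_2$. The form I expect, to be verified by checking it on the basis, is
\[
\widetilde{\mathcal W(q_1q_2)}(z)=\sum_{k\ge0}\frac{z^k(1+z)^k}{k!}\,\tilde A^{(k)}(z)\,\tilde B^{(k)}(z).
\]
This is Wagner's diamond product.

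\textbf{Step 3: show the diamond product preserves zeros in $[-1,0]$.} This is the main obstacle. I would fix $\tilde B$ and study the linear operator
\[
T\colon f\mapsto \sum_k \frac{z^k(1+z)^k}{k!}\,\tilde B^{(k)}(z)\,f^{(k)}(z).
\]
I would apply the Borcea--Br\"and\'en characterization of linear preservers of real-rootedness. Concretely, it suffices to show that the symbol
\[
T\bigl[(z+w)^{N}\bigr]=\sum_k \binom{N}{k}\bigl(z(1+z)\bigr)^k\,\tilde B^{(k)}(z)\,(z+w)^{N-k}
\]
is real stable, or at least that it is real-rooted in $z$ for suitable $w$ with the correct interlacing.

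\textbf{Step 3, first attempt: reduce to linear factors.} Because $T$ is multilinear in the factors, I would first treat $\tilde B(z)=z+c$ with $c\in[0,1]$. Then $T=\mathrm{id}+(z+c)\,z(1+z)\,\partial_z$ up to the lower-order term, which visibly preserves real-rootedness in $[-1,0]$ by a Rolle/interlacing argument on $f$ and $f'$. After that I would build up the general $\tilde B$ by an induction on its degree. This requires an associativity or commutation relation for the diamond product, which I expect to be the delicate part.

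\textbf{Step 3, fallback.} If that induction fails, I would argue directly via Hermite--Biehler/Obreschkoff. I would show that $f\mapsto T f$ maps pairs of polynomials with interlacing zeros to pairs with interlacing zeros, and then use a continuity/homotopy argument that moves the zeros of $\tilde B$ from $0$ to their actual positions. Throughout, the zeros should stay in $[-1,0]$, because the factor $z(1+z)$ vanishes at the two endpoints.

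\textbf{Step 4: transform back.} Finally I would undo the substitution $t=z/(1+z)$ to conclude that $\mathcal{W}(q_1q_2)$ has only nonpositive real zeros.
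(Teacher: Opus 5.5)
The paper gives no proof of this statement. It is quoted from Wagner, and the paper only ever applies it with the second factor linear, namely $q_i(t)=nt+n-v_i$. That linear case is exactly what the recursion \eqref{eq_T_recursion} and Lemma~\ref{lem_Branden} handle.

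\textbf{What is right.} Your Steps 1, 2 and 4 set up Wagner's framework correctly: $\tilde A=\mathcal T(q_1)$, and $(-\infty,0]$-rootedness of $\mathcal W(q_1)$ corresponds to $[-1,0]$-rootedness of $\mathcal T(q_1)$.

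\textbf{The closed form is off by a factor $k!$.} The correct identity is
\[
\mathcal T(q_1q_2)(z)=\sum_{k\ge 0}\frac{\tilde A^{(k)}(z)}{k!}\,\frac{\tilde B^{(k)}(z)}{k!}\,\bigl(z(1+z)\bigr)^k .
\]
Take $q_1=q_2=\binom{x}{2}$: the true value is $\mathcal T\bigl(\binom{x}{2}^2\bigr)=6z^4+6z^3+z^2$, whereas your formula gives $7z^4+8z^3+2z^2$. Consequently the operator $T$ and the symbol in Step 3 are not the right objects. Also, in the linear case $T$ is $f\mapsto (z+c)f+z(1+z)f'$, not $\mathrm{id}+(z+c)z(1+z)\partial_z$.

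\textbf{The genuine gap is Step 3, which is the entire content of Wagner's theorem and is not carried out.}

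Your first attempt cannot work. $T$ depends \emph{linearly} on $\tilde B$, not multiplicatively on its factors, so writing $\tilde B=\prod_i(z+c_i)$ yields no factorization of $T$. The only legitimate induction goes through associativity of the transported product $\diamond$. That would require a factorization $q_2=\prod_i(x+c_i)$ with $c_i\in[0,1]$, and the hypothesis does not provide one. For example, $q_2(x)=x^2+x+\tfrac12$ has $\mathcal W(q_2)(t)=\tfrac12(1+t)^2$ and $\tilde B(z)=\tfrac12(2z+1)^2$, yet $q_2$ has no real zeros. Indeed $(z+c_1)\diamond(z+c_2)=2z^2+(1+c_1+c_2)z+c_1c_2$ never equals $\tfrac12(2z+1)^2$ for real $c_1,c_2$. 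So iterating the linear case only proves the special case where $q_2$ is real-rooted with zeros in $[-1,0]$. That is all the paper uses, but it is not the statement.

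Your fallback supplies no invariant that prevents two real zeros of $Tf$ from colliding and leaving the real axis along the homotopy; ruling that out is precisely the theorem. The vanishing of $z(1+z)$ at the endpoints only gives $Tf(0)=\tilde B(0)f(0)$ and $Tf(-1)=\tilde B(-1)f(-1)$.

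Likewise, invoking Borcea--Br\"and\'en only relocates the problem. Real stability of the (correct) symbol is asserted, not proved. Even granting real-rootedness, you would still need a separate sign argument to confine the zeros to $[-1,0]$, for instance Gauss--Lucas applied on $(0,\infty)$ and on $(-\infty,-1)$.

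What is missing is an actual argument for nonlinear $\tilde B$ (equivalently, for $q_2$ not split over $[-1,0]$), such as Wagner's original one.
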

Based on Wagner's result, we are able to give a sufficient condition for the real-rootedness of $g^{\mathbf{v}}_{n,m}(t)$.

\begin{thm}\label{thm-main-realzeros} 
If $0\le v_i\leq n$ for each $0\leq i\leq m$, then $g^{\mathbf{v}}_{n,m}(t)$ has only nonpositive real zeros. 
\end{thm}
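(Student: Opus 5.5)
By \eqref{eq-D-expression}, applying $O^{\mathbf v}_{n,m}$ to $\frac{1}{1-t}$ gives the series $\sum_{j\ge0} q(j)t^j$, where
\[
q(x)=\prod_{i=0}^{m}\bigl(n(x+1)-v_i\bigr)=\prod_{i=0}^m q_i(x),\qquad q_i(x)=nx+(n-v_i).
\]
Hence $g^{\mathbf v}_{n,m}(t)$ is the numerator of this series, and $g^{\mathbf v}_{n,m}=\mathcal W(q)$ whenever $\deg q=m+1$, which holds because $n\ge1$. The plan is to factor $q$ into these linear pieces and apply Wagner's Theorem \ref{wagner-thm} repeatedly.

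\textbf{Step 1: the linear factors.} For each $i$ we compute
\[
\sum_{j\ge0}\bigl(nj+n-v_i\bigr)t^j=\frac{(n-v_i)(1-t)+nt}{(1-t)^2},
\]
so that
\[
\mathcal W(q_i)(t)=(n-v_i)+v_i t .
\]
Under the hypothesis $0\le v_i\le n$, both coefficients are nonnegative.
\begin{itemize}
\item If $0<v_i<n$, the only zero is $-(n-v_i)/v_i<0$.
\item If $v_i=0$, then $\mathcal W(q_i)=n$ is a nonzero constant, so it has no zeros.
\item If $v_i=n$, then $\mathcal W(q_i)=nt$, whose only zero is $0$.
\end{itemize}
In every case $\mathcal W(q_i)$ has only nonpositive real zeros.

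\textbf{Step 2: induction.} Set $Q_k=q_0\cdots q_k$. Since $\mathcal W(Q_{k}q_{k+1})$ is the numerator of the Hadamard product, Theorem \ref{wagner-thm} shows that $\mathcal W(Q_{k+1})$ has only nonpositive real zeros provided $\mathcal W(Q_k)$ and $\mathcal W(q_{k+1})$ do. Inducting on $k$ then gives the claim for $\mathcal W(q)$.

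\textbf{Step 3: lowest terms.} The remaining point is to reconcile ``lowest terms'' in \eqref{eq:defi-fnk(x)} with the fixed denominator $(1-t)^{m+2}$ used in the definition of $\mathcal W$. Because $\deg q=m+1$ exactly, the rational function has a pole of order exactly $m+2$ at $t=1$. So $\mathcal W(q)(1)\neq0$, the fraction is already reduced, and $g^{\mathbf v}_{n,m}=\mathcal W(q)$.

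I expect the only real obstacle to be a convention issue: whether Wagner's theorem tolerates degenerate factors such as a constant $\mathcal W(q_i)$ or one with a zero at $0$. A nonzero constant has no zeros and is harmless. If the zero at $t=0$ causes trouble, I would handle the $v_i=n$ cases by a limiting argument: replace $v_i$ by $v_i-\varepsilon$, apply the result, and let $\varepsilon\to0$. Since real-rootedness with nonpositive zeros is preserved under coefficientwise limits of fixed degree, this completes the argument.
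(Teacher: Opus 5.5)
Your proposal is correct and follows the paper's own proof. Both factor the operator into the linear polynomials $q_i(t)=nt+n-v_i$, observe that $\mathcal{W}(q_i)(t)=v_it+n-v_i$ is either a nonzero constant or has a single nonpositive zero, and iterate Wagner's Theorem~\ref{wagner-thm}. Your Step~3, that the pole at $t=1$ has exact order $m+2$ and hence $g^{\mathbf v}_{n,m}=\mathcal{W}(q)$, is a harmless check the paper leaves implicit. The $\varepsilon$-perturbation fallback is unnecessary, because constant factors and factors vanishing only at $t=0$ already satisfy the hypothesis of Wagner's theorem as stated.
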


\begin{proof}
For each $0\le i\le m$, set $q_i(t):=nt+n-v_i$.
It is easy to verify that $\mathcal{W}(q_i)(t)=v_it+n-v_i$, which is either a constant or has a nonpositive zero. 
By \eqref{eq:defi-fnk(x)} and \eqref{eq-D-expression}, we see that 
\begin{align*}
g^{\bf{v}}_{n,m}(t)=\mathcal{W}(q_0q_1\cdots q_m)(t).
\end{align*}
Iterating  Theorem \ref{wagner-thm} gives the desired result. 
\end{proof}
Since $D\left(\frac{1}{1-t}\right)=\frac{1}{(1-t)^2}$, we must have $\deg g^{\mathbf v}_{n,m}(t)\le m$ whenever $v_0=0$.
In the remaining part of this section we always assume that $v_0=0$. 
By Newton's inequality for real-rooted polynomials, we know that  
the coefficient sequence $(c^{\mathbf{v}}_{n,m,0},\ldots,c^{\mathbf{v}}_{n,m,m})$ of $g^{\mathbf{v}}_{n,m}(t)$
is log-concave, namely, for any $1\leq j\leq m-1$, 
\begin{align}\label{eq-lc-coeff-gnmv}
(c^{\mathbf{v}}_{n,m,j})^2\geq c^{\mathbf{v}}_{n,m,j-1}c^{\mathbf{v}}_{n,m,j+1}.
\end{align}
The next result shows another type of inequality satisfied by $c^{\mathbf{v}}_{n,m,j}$, which was motivated by \eqref{que}.
\begin{thm}\label{thm-main-inequality} 
If $v_0=0$ and $0\le v_i<n$ for each $1\leq i\leq m$, then 
    \begin{align}\label{conj-1-inq-A}
        \frac{c^{\mathbf{v}}_{n,m,0}}{c^{\mathbf{v}}_{n,m,m}}\le\frac{c^{\mathbf{v}}_{n,m,1}}{c^{\mathbf{v}}_{n,m,m-1}}\le\cdots\le\frac{c^{\mathbf{v}}_{n,m,m}}{c^{\mathbf{v}}_{n,m,0}}.
    \end{align}
\end{thm}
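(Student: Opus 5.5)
We write $c_j=c^{\mathbf v}_{n,m,j}$ and $r_j=c_{m-j}$, so that $\sum_j r_jt^j=\mathcal I_m(g^{\mathbf v}_{n,m})(t)$. The chain \eqref{conj-1-inq-A} says exactly that $c_j/r_j$ is nondecreasing in $j$, i.e. $c_jr_{j+1}\le c_{j+1}r_j$ for $0\le j\le m-1$. The plan is to realize both $g^{\mathbf v}_{n,m}$ and its reversal as numerators $\mathcal W(\cdot)$ of products of linear factors. We then pass from one product to the other by changing one linear factor at a time, and show that each step preserves the monotonicity of coefficient ratios. Ratio monotonicity is transitive: if $a_j/b_j$ and $b_j/e_j$ are nondecreasing with positive entries, so is $a_j/e_j$. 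Hence the one-factor steps suffice.

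\emph{Step 1 (reciprocity).} By \eqref{eq:defi-fnk(x)} and \eqref{eq-D-expression}, $g^{\mathbf v}_{n,m}=\mathcal W(P)$ with
\[
P(j)=\prod_{i=0}^m\bigl(n(j+1)-v_i\bigr)=n(j+1)\prod_{i=1}^m\bigl(n(j+1)-v_i\bigr),
\]
a polynomial of degree $m+1$ whose numerator has degree at most $m$ since $v_0=0$. Apply Popoviciu/Stanley reciprocity $\sum_{j\ge1}P(-j)t^j=-\sum_{j\le 0}P(-j)t^{j}$ to the rational function. I expect it to show that $\mathcal I_m(\mathcal W(P))=\mathcal W(Q)$ up to a positive constant, where
\[
Q(j)=nj\prod_{i=1}^m(nj+v_i),
\]
obtained from $(-1)^{m+1}P(-j-1)$. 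The index shift will need care, and this is where $v_0=0$ is used: the factor $n(j+1)$ turns into $nj$, which makes the degrees match. For linear factors we have $\mathcal W(n(j+1)-v)=vt+n-v$ and $\mathcal W(nj+v)=(n-v)t+v$, so the two sides are "mirror" linear data. This matches $\mathcal I_1$ on each factor, which is a consistency check.

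\emph{Step 2 (one-factor swaps).} Define $P_k$ from $P$ by replacing the first $k$ factors $n(j+1)-v_i$ by $nj+v_i$, with $i=0$ included, where $n(j+1)$ becomes $nj$. Then $P_0=P$ and $P_{m+1}=Q$. Consecutive $P_k$ and $P_{k+1}$ have the form $L_1R$ and $L_2R$, where $R$ is a product of linear factors each having $\mathcal W$ with only nonpositive zeros. For $\lambda,\mu\ge0$ the combination $\lambda L_1+\mu L_2$ is again linear in $j$ with positive leading coefficient. Its $\mathcal W$ is a nonnegative combination of $v_it+n-v_i$ and $(n-v_i)t+v_i$, so it has nonnegative coefficients and hence a nonpositive zero. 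By Theorem~\ref{wagner-thm}, every $\lambda\mathcal W(P_k)+\mu\mathcal W(P_{k+1})=\mathcal W((\lambda L_1+\mu L_2)R)$ is real-rooted with nonpositive zeros. By the Obreschkoff/Dedieu criterion, $\mathcal W(P_k)$ and $\mathcal W(P_{k+1})$ then have interlacing zeros.

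\emph{Step 3 (interlacing gives ratio monotonicity), the main obstacle.} I would invoke the known fact (Liu--Wang, going back to Brenti) that two nonnegative real-rooted polynomials $f\preceq h$ have a TP$_2$ coefficient matrix, so that $h_jf_{j+1}\le h_{j+1}f_j$, i.e. $h_j/f_j$ is monotone in the right direction. The delicate points are these. First, fix the direction of interlacing at each step consistently: replacing $n(j+1)-v$ by $nj+v$ should always push zeros the same way, toward the reversal, and this needs to be checked. Second, ensure positivity of all coefficients so that the ratios make sense. This is where the strict inequality $v_i<n$ is needed: $n-v_i>0$ keeps the leading coefficients and the constant terms of the factors positive. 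A fallback, if the TP$_2$ lemma needs strict interlacing, is to perturb the $v_i$ slightly and pass to the limit.

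Chaining the $m+1$ steps by transitivity gives that $c_j/r_j=\mathcal W(P)_j/\mathcal W(Q)_j$ is nondecreasing. This is \eqref{conj-1-inq-A}.
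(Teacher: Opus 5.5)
Your plan has a genuine gap; in fact there are two, and together they are fatal for this route. First, Step~1 is off by a factor of $t$, not by a positive constant. The factorwise mirror $vt+n-v\mapsto (n-v)t+v$ is $\mathcal I_1$ on each of the $m+1$ factors, so the product realizes $\mathcal I_{m+1}$. Write $g=g^{\mathbf v}_{n,m}$; since $v_0=0$ forces $\deg g\le m$, reciprocity gives $\mathcal W(Q)(t)=\mathcal I_{m+1}(g)(t)=t\,\mathcal I_m(g)(t)$. That is, $\mathcal W(Q)_0=0$ and $\mathcal W(Q)_j=c_{m+1-j}$. A successful chain would therefore compare $c_j$ with $c_{m+1-j}$, i.e.\ the products $c_pc_q$ and $c_{p+1}c_{q+1}$ with $p+q=m$, whereas \eqref{conj-1-inq-A} concerns $p+q=m-1$. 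If you insist on the exact reversal, the degree-$(m+1)$ polynomial whose $\mathcal W$ is $\mathcal I_m(g)$ is $n(j+1)\prod_{i\ge1}(n(j+1)+v_i)$. Its factors satisfy $\mathcal W(n(j+1)+v_i)=(n+v_i)-v_it$, which has a positive zero, so Theorem~\ref{wagner-thm} no longer controls the interpolation.

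Second, the direction issue you flagged in Step~3 really does fail. The swap $n(j+1)-v_i\to nj+v_i$ moves the zero of the linear factor from $-(n-v_i)/v_i$ to its reciprocal $-v_i/(n-v_i)$. This is a move to the right when $v_i<n/2$ but to the left when $v_i>n/2$, whereas the $i=0$ swap (constant $n$ to $nt$) creates a zero at $0$, a move to the right. Values $v_i>n/2$ occur in the intended applications; for type $A$ one has $\mathbf v=(0,1,\ldots,n-2)$ with parameter $n$. So the interlacings point in opposite directions and ratio monotonicity cannot be chained. Concretely, take $n=1$, $m=2$, $v_1=v_2=v$. Then $g=(1-v)^2+(4-2v^2)t+(1+v)^2t^2$ and $\mathcal W(Q)=(1+v)^2t+(4-2v^2)t^2+(1-v)^2t^3$. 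At $v=0.9$ the ratios $\mathcal W(Q)_j/g_j$ are $0,\ 3.61/2.38,\ 2.38/3.61,\ \infty$, which is monotone in neither direction, so the comparison your chain is built to produce is false. Moreover, $g$ and $\mathcal I_2(g)$ do not interlace here (both zeros of $\mathcal I_2(g)$ lie left of both zeros of $g$), so a single interlacing step with the correct index cannot work either.

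What is needed instead is a direct coefficient argument. The paper argues by induction on $m$ via the recurrence of Lemma~\ref{lem-recursion}. It writes $c_{m,j+1}c_{m,m-j}-c_{m,j}c_{m,m-1-j}$ as a nonnegative combination of the lower-level determinants $\mathcal D_{m-1,i,i'}$, which are nonnegative by induction, plus $2v_mnT_{n,m,j}$. It then shows $T_{n,m,j}\ge0$ from the induction hypothesis together with log-concavity of the coefficients, which follows from the real-rootedness in Theorem~\ref{thm-main-realzeros}.
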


Before proving the above theorem, let us first note a recurrence relation satisfied by the coefficients of $g^{\mathbf{v}}_{n,m}(t)$.
\begin{lem}\label{lem-recursion}
For given $n,m$ and $\mathbf v=(v_0,v_1,\ldots,v_m)$, let $c^{\mathbf{v}}_{n,m,k}$ be as defined by \eqref{eq-g-expansion}. 
If $v_0=0$, then for any $0\le j \le m$ we have 
\begin{align}\label{eq:rec-cki}
 c^{\mathbf{v}}_{n,m,j}=(n(m+1-j)+v_m)c^{\overline{\mathbf{v}}}_{n,m-1,j-1}+(n(j+1)-v_m)c^{\overline{\mathbf{v}}}_{n,m-1,j},
\end{align}
where $\overline{\mathbf{v}}=(v_0,v_1,\ldots,v_{m-1})$ and $c^{(v_0)}_{n,0,0}=n$.
\end{lem}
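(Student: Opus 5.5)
The recurrence comes from peeling off the last factor $(nD-v_m)$ of the operator $O^{\mathbf v}_{n,m}$ and computing its action on the rational function produced by the first $m$ factors.

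\emph{Setup.} Since $v_0=0$ and $D\bigl(\frac{1}{1-t}\bigr)=\frac{1}{(1-t)^2}$, we have $(nD-v_0)\frac{1}{1-t}=\frac{n}{(1-t)^2}$. So the base case is $g^{(v_0)}_{n,0}(t)=n$, that is, $c^{(v_0)}_{n,0,0}=n$, consistent with the stated convention. More generally, for every $m\ge 0$ (with $v_0=0$) we can write
\[
O^{\mathbf v}_{n,m}\Bigl(\frac{1}{1-t}\Bigr)=\frac{g^{\mathbf v}_{n,m}(t)}{(1-t)^{m+2}},\qquad \deg g^{\mathbf v}_{n,m}\le m.
\]
This form follows by induction from the computation below. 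We work with this unreduced representation, which is the one implicitly used when coefficients are indexed up to $m$. Then $O^{\mathbf v}_{n,m}=(nD-v_m)\,O^{\overline{\mathbf v}}_{n,m-1}$, and hence
\[
\frac{g^{\mathbf v}_{n,m}(t)}{(1-t)^{m+2}}=(nD-v_m)\frac{g(t)}{(1-t)^{m+1}},\qquad g:=g^{\overline{\mathbf v}}_{n,m-1}.
\]

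\emph{Key computation.} Write $D=1+t\frac{d}{dt}$ and apply it to $g(t)(1-t)^{-(m+1)}$:
\[
D\frac{g}{(1-t)^{m+1}}=\frac{(1-t)\bigl(g+tg'\bigr)+(m+1)tg}{(1-t)^{m+2}}.
\]
Multiplying the numerator of $(nD-v_m)\frac{g}{(1-t)^{m+1}}$ back out gives
\[
g^{\mathbf v}_{n,m}(t)=n(1-t)\bigl(g+tg'\bigr)+n(m+1)t\,g-v_m(1-t)\,g.
\]
Collecting terms yields
\[
g^{\mathbf v}_{n,m}(t)=(n-v_m)g+\bigl(nm+v_m\bigr)t\,g+n\,t(1-t)g'.
\]

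\emph{Extracting coefficients.} Substitute $g=\sum_i c_i t^i$ with $c_i=c^{\overline{\mathbf v}}_{n,m-1,i}$. The coefficient of $t^j$ on the right is
\[
(n-v_m)c_j+(nm+v_m)c_{j-1}+n\,j\,c_j-n(j-1)c_{j-1}.
\]
This simplifies to $\bigl(n(j+1)-v_m\bigr)c_j+\bigl(n(m+1-j)+v_m\bigr)c_{j-1}$, which is exactly \eqref{eq:rec-cki}. The convention $c_{-1}=0$ handles $j=0$, and $c_m=0$ handles $j=m$.

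The same identity shows $\deg g^{\mathbf v}_{n,m}\le m$: the $t^{m+1}$ coefficient is $(nm+v_m)c_m-n(m-1)c_{m-1}\cdot 0$, which vanishes because $\deg g\le m-1$ by induction.

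\emph{Expected obstacle.} The only delicate point is the phrase ``in lowest terms'' in \eqref{eq:defi-fnk(x)}. If some $v_i$ makes the numerator divisible by $(1-t)$, the reduced numerator differs from the one above. I would handle this by stating explicitly that $g^{\mathbf v}_{n,m}$ here denotes the numerator over $(1-t)^{m+2}$, with coefficients indexed $0,\dots,m$. Under the hypotheses $0\le v_i<n$ used in Theorem~\ref{thm-main-inequality}, this is harmless. Indeed, the recurrence with positive multipliers shows $g^{\mathbf v}_{n,m}(1)=\sum_j c_j>0$, so no cancellation with $(1-t)$ occurs.
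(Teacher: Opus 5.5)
Your proposal is correct and is essentially the paper's proof: you peel off the last factor $(nD-v_m)$, obtain the numerator $\bigl((nm+v_m)t+n-v_m\bigr)g+nt(1-t)g'$, and compare coefficients of $t^j$. Your ``lowest terms'' worry is vacuous for every $\mathbf v$, not only when $0\le v_i<n$: summing your recurrence over $j$ gives $g^{\mathbf v}_{n,m}(1)=n(m+1)\,g^{\overline{\mathbf v}}_{n,m-1}(1)=(m+1)!\,n^{m+1}\neq 0$. Separately, your displayed $t^{m+1}$ coefficient is garbled, but this is harmless because $\deg g^{\mathbf v}_{n,m}\le \deg g+1\le m$ is immediate.
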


\begin{proof}
Since $g^{(v_0)}_{n,0}(t)=n$, the initial condition $c_{n,0,0}=n$ naturally holds.
By using \eqref{eq:defi-O_nk} and \eqref{eq:defi-fnk(x)}, one can verify that 
\begin{align*}
\frac{g^{\mathbf{v}}_{n,m}(t)}{(1-t)^{m+2}}&=\big(nD-v_{m}\big)\big(nD-v_{m-1}\big)\cdots \big(nD-v_{0}\big)\left(\frac{1}{1-t}\right)\\[5pt]
&=\big(nD-v_{m}\big)\left(\frac{g^{\overline{\mathbf{v}}}_{n,m-1}(t)}{(1-t)^{m+1}}\right)\\[5pt]
&=n\left(\frac{tg^{\overline{\mathbf{v}}}_{n,m-1}(t)}{(1-t)^{m+1}}\right)'-v_{m}\frac{g^{\overline{\mathbf{v}}}_{n,m-1}(t)}{(1-t)^{m+1}}\\[5pt]
=&\ n\left(\frac{g^{\overline{\mathbf{v}}}_{n,m-1}(t)+t (g^{\overline{\mathbf{v}}}_{n,m-1}(t))'}{(1-t)^{m+1}}+\frac{(m+1)t g^{\overline{\mathbf{v}}}_{n,m-1}(t)}{(1-t)^{m+2}}\right)
   -v_{m}\frac{g^{\overline{\mathbf{v}}}_{n,m-1}(t)}{(1-t)^{m+1}}\\[5pt]
=&\ \frac{\big((nm+v_{m})t+n-v_{m}\big)g^{\overline{\mathbf{v}}}_{n,m-1}(t)+nt(1-t) (g^{\overline{\mathbf{v}}}_{n,m-1}(t))'}{(1-t)^{m+2}}.
\end{align*}
Multiplying both sides by $(1-t)^{m+2}$ and then comparing the coefficients of $t^j$ yield the desired result.
\end{proof}
We proceed to prove Theorem \ref{thm-main-inequality}. 

\begin{proof}[Proof of Theorem \ref{thm-main-inequality}.]
Use induction on $m$. Note that the $m=0$ case is vacuous. 
From \eqref{eq:defi-O_nk} and \eqref{eq:defi-fnk(x)} it follows that 
$g^{{(v_0,v_1)}}_{n,1}(t)=n(n+v_1)t+n(n-v_1)$. Since $0\leq v_1<n$, 
the assertion is clear for $m=1$.
Now let $m\geq 2$ and assume the assertion for $m-1$.
It is sufficient to show that for each $0\leq j\leq m-1$,
\begin{align*}
        \frac{c^{\mathbf{v}}_{n,m,j}}{c^{\mathbf{v}}_{n,m,m-j}}\le\frac{c^{\mathbf{v}}_{n,m,j+1}}{c^{\mathbf{v}}_{n,m,m-1-j}}.
    \end{align*} 
We will use the recurrence relation \eqref{eq:rec-cki}. Since $n$ and $\mathbf{v}$ will be fixed throughout the proof, the use of $c_{m,j}$ to denote $c^{\mathbf v}_{n,m,j}$ should cause no confusion. 
By Lemma \ref{lem-recursion} and the hypothesis that $0\leq v_i<n$ for each $0\leq i\leq m$,
we see that $c_{m,j}> 0$ for any $0 \le j \le m$.
Thus it remains to prove the following equivalent inequality
    \begin{align}\label{conj-1-inq-C}
      c_{m,j+1}c_{m,m-j}-  c_{m,j}c_{m,m-1-j}\ge 0.
    \end{align}
For any $0\leq i,i' \leq m-1$, let
\begin{align*}
\mathcal{D}_{m-1,i,i'}:=c_{m-1,i}c_{m-1,m-1-i'}-c_{m-1,i'}c_{m-1,m-1-i}.
 \end{align*}
The induction hypothesis
    \begin{align*}
        \frac{c_{m-1,0}}{c_{m-1,m-1}}\le\frac{c_{m-1,1}}{c_{m-1,m-2}}\le\cdots\le\frac{c_{m-1,m-1}}{c_{m-1,0}}
    \end{align*}
implies that $\mathcal{D}_{m-1,i,i'}$ is nonnegative whenever $i\geq i'$. 
By Lemma \ref{lem-recursion}, for any $0\leq j \leq m-1$ we have 
\begin{align*}
c_{m,j}&=\big(n(m+1-j)+v_m\big)c_{m-1,j-1}+\big(n(j+1)-v_m\big)c_{m-1,j},\\
c_{m,m-j}&=\big(n(j+1)+v_m\big)c_{m-1,m-1-j}+\big(n(m+1-j)-v_m\big)c_{m-1,m-j},\\
c_{m,j+1}&=\big(n(m-j)+v_m\big)c_{m-1,j}+\big(n(j+2)-v_m\big)c_{m-1,j+1},\\
c_{m,m-1-j}&=\big(n(j+2)+v_m\big)c_{m-1,m-2-j}+\big(n(m-j)+v_m\big)c_{m-1,m-1-j}.
\end{align*}
One can verify that substituting into 
the left hand side of \eqref{conj-1-inq-C} yields
\begin{align}
       c_{m,j+1}c_{m,m-j}&- c_{m,j}c_{m,m-1-j}\notag\\
       =& (nm-nj-v_m)(nm-nj+n+v_m)\mathcal{D}_{m-1,j,j-1} \notag \\ 
       &+(nj+n-v_m)(nj+2n+v_m)\mathcal{D}_{m-1,j+1,j} \notag\\ 
      &+(nj+2n-v_m)(nm-nj+n-v_m)\mathcal{D}_{m-1,j+1,j-1} \notag \\
      &+2v_mnT_{n,m,j}, \qquad \qquad \qquad ~~~\label{conj-1-inq-D}
 \end{align}
 where 
 \begin{align*}
   T_{n,m,j}&=(c_{m-1,j} c_{m-1,m-j}+c_{m-1,j+1}c_{m-1,m-1-j}-2c_{m-1,j-1}c_{m-1,m-2-j})\\
   &+(m+1)(c_{m-1,j}c_{m-1,m-1-j} -c_{m-1,j-1}c_{m-1,m-2-j}). 
 \end{align*}
In view of the nonnegativity of $\mathcal{D}_{m-1,j,j-1}$, $\mathcal{D}_{m-1,j+1,j}$ and $\mathcal{D}_{m-1,j+1,j-1}$, 
it suffices to show that 
\begin{align}\label{eq-1-sim}
c_{m-1,j} c_{m-1,m-j}+c_{m-1,j+1}c_{m-1,m-1-j}-2c_{m-1,j-1}c_{m-1,m-2-j}\ge 0
\end{align}
and
\begin{align}\label{eq-2-sim}
c_{m-1,j}c_{m-1,m-1-j} -c_{m-1,j-1}c_{m-1,m-2-j}\ge 0
\end{align}
for any $0 \leq j \leq m - 1$. We may assume that $1 \leq j \leq m - 2$ since \eqref{eq-1-sim} and \eqref{eq-2-sim} clearly hold for $j=0$ or $j=m-1$.
Note that \eqref{eq-1-sim} is equivalent to 
\begin{align}\label{eq-3-sim}
\frac{c_{m-1,j}}{c_{m-1,j-1}} \cdot \frac{c_{m-1,m-j}}{c_{m-1,m-2-j}}+\frac{c_{m-1,j+1}}{c_{m-1,j-1}}\cdot\frac{c_{m-1,m-1-j}}{c_{m-1,m-2-j}}\ge 2.
\end{align}
From the induction hypothesis, we have
\begin{align}\label{eq-3-sim-2}
\frac{c_{m-1,j}}{c_{m-1,m-1-j}}\ge \frac{c_{m-1,j-1}}{c_{m-1,m-j}} \quad \text{and} \quad 
\frac{c_{m-1,j+1}}{c_{m-1,m-2-j}}\ge\frac{c_{m-1,j-1}}{c_{m-1,m-j}},
\end{align}
which are equivalent to
$$\frac{c_{m-1,j}}{c_{m-1,j-1}}\ge \frac{c_{m-1,m-1-j}}{c_{m-1,m-j}} \quad \text{and}\quad
\frac{c_{m-1,j+1}}{c_{m-1,j-1}}\ge \frac{c_{m-1,m-2-j}}{c_{m-1,m-j}}.$$
Substituting the above two inequalities into the left-hand side of
\eqref{eq-3-sim}, we obtain
 \begin{align*}
     \frac{c_{m-1,j}}{c_{m-1,j-1}}\cdot\frac{c_{m-1,m-j}}{c_{m-1,m-2-j}}+&\frac{c_{m-1,j+1}}{c_{m-1,j-1}}\cdot\frac{c_{m-1,m-1-j}}{c_{m-1,m-2-j}}
     \\[5pt]
    & 
   \ge \frac{c_{m-1,m-1-j}}{c_{m-1,m-j}}\cdot\frac{c_{m-1,m-j}}{c_{m-1,m-2-j}}+\frac{c_{m-1,m-2-j}}{c_{m-1,m-j}}\cdot\frac{c_{m-1,m-1-j}}{c_{m-1,m-2-j}} \\[5pt]
     & =\frac{c_{m-1,m-1-j}}{c_{m-1,m-2-j}}+\frac{c_{m-1,m-1-j}}{c_{m-1,m-j}}\\[5pt]
     & \geq \frac{c_{m-1,m-j}}{c_{m-1,m-1-j}}+\frac{c_{m-1,m-1-j}}{c_{m-1,m-j}}\geq 2,
 \end{align*}
 as desired, where the second inequality follows from  \eqref{eq-lc-coeff-gnmv}. 
Let us proceed to prove \eqref{eq-2-sim}, which is equivalent to
\begin{align*}
\frac{c_{m-1,j-1}} {c_{m-1,m-1-j}}
\le
\frac{c_{m-1,j}}{c_{m-1,m-2-j}}.
\end{align*}
By using \eqref{eq-lc-coeff-gnmv} and \eqref{eq-3-sim-2}, one can show that
\begin{align*}
\frac{c_{m-1,j-1}} {c_{m-1,m-1-j}}
=\frac{c_{m-1,j-1}}{c_{m-1,m-j}} \cdot \frac{c_{m-1,m-j}}{c_{m-1,m-1-j}} 
\le
\frac{c_{m-1,j}}{c_{m-1,m-1-j}} \cdot \frac{c_{m-1,m-1-j}}{c_{m-1,m-2-j}}
=\frac{c_{m-1,j}}{c_{m-1,m-2-j}},
\end{align*}
as desired.
This completes the proof.
\end{proof}

Now we are able to prove Theorems \ref{que:ADKE_5.5(b)} and \ref{PPF-RZ}.

\begin{proof}[Proof of Theorems \ref{que:ADKE_5.5(b)} and \ref{PPF-RZ}.]
Theorem \ref{PPF-RZ} is an immediate consequence of Proposition \ref{prop-ppf-operator} and Theorem \ref{thm-main-realzeros}.
Before proving Theorem \ref{que:ADKE_5.5(b)}, let us recall a basic result on zeta polynomials.
Suppose that $P$ is a poset whose longest chain is of length $n-1$ and $Q$ is a poset whose longest chain is of length $m-1$.
It is known that   
\begin{align}\label{eq-product}
\sum_{j\ge 0}\mathcal{Z}(P,j)\mathcal{Z}(Q,j)t^j=\frac{h(\Delta(P\times Q),t)}{(1-t)^{m+n-1}}.
\end{align}
Let $W$ be a finite Coxeter group with irreducible components $W_1,\ldots,W_\ell$. Suppose that each $W_i$ is of rank $n_i$ with Coxeter number $h_i$ and degrees $0<d_{i,1}\le d_{i,2}\le \cdots \le d_{i,n_i}=h_i$.
By Proposition \ref{prop-hpol-operator} we see that 
\begin{align}\label{eq_h star-k=1}
h\left(\Delta\left(\NC_{W_i}\right),t\right)=\frac{1}{|W_i|}g^{(h_i-d_{i,n_i},\ldots,h_i-d_{i,2},h_i-d_{i,1})}_{h_i,n_i-1}(t).
\end{align}
Let $\mathfrak{h}=h_1h_2\cdots h_{\ell}$ and $v_{i,j}=(h_i-d_{i,n_i+1-j})\mathfrak{h}/h_i$ for each $1\leq i\leq \ell$ and $1\leq j\leq n_i$. It is easy to see that
$v_{i,1}=0$ and $0\leq v_{i,j}<\mathfrak{h}$ for all $1\leq j\leq n_i$. 
By using \eqref{eq-product}, \eqref{eq_h star-k=1} and the fact that $\NC_W\cong \NC_{W_1}\times \cdots\times \NC_{W_{\ell}}$ (see \cite[Section 3.4.3]{Armstrong-2009}), one can show that 
\begin{align}\label{eq_finite Coxeter}
h\left(\Delta\left(\NC_{W}\right),t\right)=\prod_{i=1}^{\ell}\frac{h_i^{n_i}}{|W_i|\mathfrak{h}^{n_i}}g^{(v_{1,1},\ldots,v_{1,n_1},\ldots,v_{\ell,1},\ldots,v_{\ell,n_{\ell}})}_{\mathfrak{h},n_1+\cdots+n_{\ell}-1}(t).   
\end{align}
The desired inequality then follows from Theorem \ref{thm-main-inequality}.  
\end{proof}

\section{Interlacing symmetric decompositions}\label{sec 5}

The aim of this section is to prove Theorem \ref{interlacing-sd}. Note that Br\"and\'en, Ferroni and Jochemko \cite{Branden-2024} proved that interlacing symmetric decompositions are preserved under Hadamard products. 

\begin{thm}[{\cite[Theorem 1.5 and Proposition 4.6]{Branden-2024}}]\label{thm:haremard-real-brenti}
If $\mathcal{W}(q_1)(t)$ and $\mathcal{W}(q_2)(t)$ have nonnegative, interlacing symmetric decompositions with respect to $\deg(q_1(t))$ and $\deg(q_2(t))$ respectively, then $\mathcal{W}(q_1q_2)(t)$ has a nonnegative, interlacing symmetric decomposition with respect to $\deg(q_1(t))+\deg(q_2(t))$.
\end{thm}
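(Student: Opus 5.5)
The plan is to translate everything from the numerator side $\mathcal{W}(q)$ back to the polynomial side $q$, where multiplication is transparent. Then we use a stable-polynomial refinement of Wagner's Theorem \ref{wagner-thm} to handle the interlacing.

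\emph{Step 1: what the reversal $\mathcal{I}_d$ does to $q$.} Let $d=\deg q$. From $\sum_{j\ge0}q(j)t^j=\mathcal{W}(q)(t)/(1-t)^{d+1}$ and the reciprocity $\sum_{j\ge 1}q(-j)t^{j}=-\sum_{j\ge0}q(j)t^{-j}$ (as rational functions), one gets
$$\mathcal{I}_d(\mathcal{W}(q))=\mathcal{W}(\tilde q),\qquad \tilde q(x):=(-1)^d q(-1-x).$$
This identity should be checked first. Now let $f=\mathcal{W}(q)$ have symmetric decomposition $(a,b)$ with respect to $d$. Since $\mathcal{I}_d(f)=a+b$, we get
$$b=\frac{f-\mathcal{I}_d f}{t-1},\qquad a=\frac{t\,\mathcal{I}_d f-f}{t-1}.$$
Hence $a$ and $b$ are $\mathcal{W}$-images of explicit combinations of $q$ and $\tilde q$. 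Multiplication by $t$ and division by $t-1$ correspond to simple difference operators on the coefficient side. So the decomposition of $\mathcal{W}(q_1q_2)$ is governed by $q_1q_2$ and $\tilde q_1\tilde q_2=\widetilde{q_1q_2}$.

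\emph{Step 2: encode interlacing as stability.} For $b\preceq a$ with nonnegative coefficients, the Hermite--Kakeya--Obreschkoff theorem gives the equivalence
$$b\preceq a \iff F(t,y):=a(t)+y\,b(t) \text{ is real stable}.$$
We would therefore reformulate the hypothesis as real stability of two-variable polynomials $F_1,F_2$ attached to $q_1,q_2$. The conclusion becomes real stability of the analogous $F$ attached to $q_1q_2$.

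\emph{Step 3: multiplicativity.} Write $q_i=\alpha_i+\beta_i$ with $\tilde\alpha_i=\alpha_i$ and $\tilde\beta_i=-\beta_i$; these are the even and odd parts under $x\mapsto -1-x$, up to the sign $(-1)^{d_i}$. Then the corresponding parts of $q_1q_2$ are bilinear: the even part is $\alpha_1\alpha_2+\beta_1\beta_2$ and the odd part is $\alpha_1\beta_2+\beta_1\alpha_2$. Via Step 1, the pair $(a,b)$ for $\mathcal{W}(q_1q_2)$ is then a bilinear expression in the pairs $(a_i,b_i)$, combined through the Hadamard-type product underlying $\mathcal{W}$ (Wagner's diamond product). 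We then need the Hadamard/diamond product to preserve real stability in the auxiliary variables. The route is Wagner's multivariate form of Theorem \ref{wagner-thm}, or equivalently the Borcea--Brändén characterization of stability preservers: show the symbol of the diamond product is real stable.

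\emph{Main obstacle.} Step 3 is the crux. The diamond product is not a coordinatewise product of $F_1$ and $F_2$, and the factors $1/(t-1)$ in Step 1 must be absorbed correctly. I would first try to find a single bivariate stable object $G_i(t,y)$ with two properties: the specialization $y\mapsto t$ recovers $\mathcal{W}(q_i)$, and $G_1\diamond G_2$ (diamond in $t$, ordinary product in $y$) is exactly the $F$ of the product. If no clean form exists, I would fall back to the degree bound $d_1+d_2$ together with nonnegativity. Nonnegativity of $a$ and $b$ should follow separately from the positivity of the bilinear formulas. Finally, the boundary conventions $0\preceq f$ and $f\preceq 0$ would handle degenerate cases where $b\equiv 0$.
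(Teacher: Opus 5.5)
\textbf{There is a genuine gap: the key step, Step~3, is never carried out.} The paper does not prove this statement; it imports it from Br\"and\'en--Ferroni--Jochemko (Theorem~1.5 with Proposition~4.6). So your argument has to stand on its own, and as written it does not.

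Your Steps~1 and~2 are correct reformulations. The identity $\mathcal{I}_d\mathcal{W}(q)=\mathcal{W}(\tilde q)$ with $\tilde q(x)=(-1)^dq(-1-x)$ holds, $\widetilde{q_1q_2}=\tilde q_1\tilde q_2$, and for nonnegative $a,b$ the relation $b\preceq a$ is equivalent to real stability of $a(t)+y\,b(t)$. But the whole content of the theorem is why the Hadamard product carries $b_1\preceq a_1$ and $b_2\preceq a_2$ to $b\preceq a$ for the product. That lives entirely in Step~3, which you only describe as an object you would try to find. Your fallback (degree bound plus nonnegativity) gives no information about interlacing.

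The ingredients you name for Step~3 also do not fit together as you suggest. Put $\mathcal{W}_e(r):=(1-t)^{e+1}\sum_{j\ge0}r(j)t^j$ for $\deg r\le e$. Let $\diamond$ be the bilinear product with $\mathcal{W}_{e_1}(r_1)\diamond\mathcal{W}_{e_2}(r_2)=\mathcal{W}_{e_1+e_2}(r_1r_2)$. From $\mathcal{W}(q_i)=a_i+tb_i$ and $\mathcal{W}(\tilde q_i)=a_i+b_i$ you get $\mathcal{W}_{d_i}(\alpha_i)=A_i:=a_i+\frac{1+t}{2}b_i$ and $\mathcal{W}_{d_i-1}(\beta_i)=-\frac12 b_i$. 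So the even and odd parts are not $a_i$ and $b_i$, and your (correct) even/odd rule yields
\begin{align*}
b=A_1\diamond b_2+b_1\diamond A_2,\qquad a=A_1\diamond A_2+\tfrac{(1-t)^2}{4}\,b_1\diamond b_2-\tfrac{1+t}{2}\bigl(A_1\diamond b_2+b_1\diamond A_2\bigr),
\end{align*}
with each $\diamond$ taken at the appropriate pair of degrees.

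\begin{itemize}
\item The formula for $a$ has terms of both signs. So neither nonnegativity of $a$ nor $b\preceq a$ can be read off from ``positivity of the bilinear formulas''.
\item The encodings $F_i=a_i+yb_i$ do not multiply. $F_1\diamond F_2$ contains the term $y^2\,b_1\diamond b_2$, and its $y$-coefficient $a_1\diamond b_2+b_1\diamond a_2$ is not the $b$ above.
\item You could instead use the genuinely multiplicative objects $q_i+y_i\tilde q_i$, even granting a multivariate Wagner theorem for them. You would then still have to extract $c_{00}+y\,c_{11}$ from $\sum c_{\epsilon_1\epsilon_2}(t)\,y_1^{\epsilon_1}y_2^{\epsilon_2}$. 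That operation is not a stability preserver: its symbol contains the factor $w_1w_2+y$, which vanishes at $w_1=w_2=e^{3\pi i/4}$, $y=i$.
\item ``Show the symbol of the diamond product is real stable'' cannot work as stated. The diamond product does not preserve real-rootedness without the sign condition on zeros: $\mathcal{W}(1-x)=1-2t$, but $\mathcal{W}((1-x)^2)=1-3t+4t^2$.
\end{itemize}

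What is missing is a real interlacing-preservation argument for the Hadamard product. It must exploit both the reflection $x\mapsto -1-x$ and the nonpositivity of the zeros. That is precisely the nontrivial content of the result the paper cites.
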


This enables us to establish the following result. 

\begin{thm}\label{eq-results}
Let $g^{\mathbf{v}}_{n,m}(t)$ be the polynomial defined by \eqref{eq:defi-fnk(x)} for some $n\geq 1, m\geq 0$ and $\mathbf{v} = (v_0, v_1, \ldots, v_m)\in \mathbb{R}_{\geq 0}^{m+1}$ with $0\le v_i\le n$ for each $0\leq i\leq m$.
If there exists a set partition $\mathbf{B}$ of 
$\{0,1,\ldots,m\}$ into singletons or pairs such that $v_i\ge \frac{n}{2}$ for each singleton $\{i\}\in \mathbf{B}$ and 
$v_{j}+v_{j'}\ge n$ for each pair $\{j,j'\}\in \mathbf{B}$, then $g^{\mathbf{v}}_{n,m}(t)$ has a nonnegative, interlacing symmetric decomposition with respect to $m+1$.
\end{thm}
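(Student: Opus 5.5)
Write $g^{\mathbf v}_{n,m}(t)=\mathcal{W}(q_0q_1\cdots q_m)(t)$ with $q_i(t)=nt+n-v_i$, exactly as in the proof of Theorem \ref{thm-main-realzeros}. Group the factors according to the blocks of $\mathbf B$: for a singleton $\{i\}$ use $q_i$ itself, and for a pair $\{j,j'\}$ use the quadratic $q_jq_{j'}$. Then $g^{\mathbf v}_{n,m}(t)$ is $\mathcal W$ of the product of these block polynomials, and the block degrees add up to $m+1$. Applying Theorem \ref{thm:haremard-real-brenti} repeatedly reduces the statement to one check: for each block, $\mathcal W$ of the block polynomial has a nonnegative interlacing symmetric decomposition with respect to its degree.

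\emph{Singletons.} Here $\mathcal W(q_i)(t)=v_it+n-v_i$, with respect to $d=1$. Solving $a+tb$ with $a$ symmetric for degree $1$ and $b$ a constant gives $a(t)=(n-v_i)(1+t)$ and $b=2v_i-n$. So $b\ge 0$ is exactly $v_i\ge n/2$, and $a\ge0$ follows from $v_i\le n$. Interlacing holds trivially: $b$ is a constant, and $a$ has its single zero at $-1$ (or $a\equiv 0$), so $b\preceq a$ by convention.

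\emph{Pairs.} Compute $\sum_{k\ge0}q_j(k)q_{j'}(k)t^k$. Writing $u=n-v_j$ and $u'=n-v_{j'}$ and using $\sum(nk+u)(nk+u')t^k$, the numerator over $(1-t)^3$ is
\[
\mathcal W(q_jq_{j'})(t)=uu'+\bigl(n^2+n(u+u')-2uu'\bigr)t+(n-u)(n-u')t^2 ,
\]
which I would verify by expanding $k^2$ and $k$ in the Eulerian basis. Decompose with respect to $d=2$: $a(t)=\alpha(1+t^2)+\beta t$ and $b(t)=\gamma(1+t)$. This gives $\alpha=uu'$, $\gamma=(n-u)(n-u')-uu'=n(n-u-u')=n(v_j+v_{j'}-n)$, and $\beta=$ (middle coefficient) $-\gamma$. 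Thus $\gamma\ge0$ is exactly the hypothesis $v_j+v_{j'}\ge n$, and $\alpha\ge0$ since $v\le n$. Simplifying, $\beta=2n(u+u')-2uu'$, hopefully nonnegative; I expect it to factor nicely, e.g.\ $\beta\ge 2uu'$ using $u,u'\le n$.

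\emph{Interlacing for pairs.} This is the step I expect to be the main obstacle. The polynomial $b$ has its zero at $-1$, and $a(t)$ is a palindromic quadratic whose discriminant condition $\beta\ge2\alpha$ I need to check. Granting that, the roots of $a$ are real, negative and reciprocal, so they straddle $-1$, and $b\preceq a$ follows. The inequality $\beta\ge 2\alpha$ reads $2n(u+u')\ge 4uu'$, which holds since $u,u'\le n$ gives $uu'\le n\min(u,u')\le n(u+u')/2$.

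The degenerate cases also need checking: $b\equiv0$, or $a\equiv0$ when $u=u'=0$ (then $a(t)=\beta t$ with $\beta=0$). These are covered by the stated conventions. Combining all blocks via Theorem \ref{thm:haremard-real-brenti} then completes the argument.
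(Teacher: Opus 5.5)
Your proposal is correct and is essentially the paper's proof. It uses the same factorization $g^{\mathbf v}_{n,m}(t)=\mathcal W(q_0\cdots q_m)(t)$ and the same explicit decompositions for singleton and pair blocks: your $\alpha,\beta,\gamma$ reproduce the paper's $a_{j,j'}(t)$ and $b_{j,j'}(t)$. It also uses the same iteration of Theorem~\ref{thm:haremard-real-brenti}, with your discriminant check $\beta\ge 2\alpha$ doing the job of the paper's $a_{j,j'}(-1)<0$.

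The one slip is that your list of degenerate cases omits $\alpha=0<\beta$, i.e.\ exactly one of $v_j,v_{j'}$ equal to $n$. There $a(t)=\beta t$ is linear with zero $0$, not a quadratic with reciprocal roots. Since $b(t)=\gamma(1+t)$ has its zero at $-1\le 0$, $b\preceq a$ still holds; the paper treats this case separately.
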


\begin{proof}
As in the proof of Theorem \ref{thm-main-realzeros}, for each $0\le i\le m$ let $q_i(t):=nt+n-v_i$ and hence $\mathcal{W}(q_i)(t)=v_it+n-v_i$.
Let us first show that 
for each singleton $\{i\}$ of the set partition $\mathbf{B}$ the polynomial $\mathcal{W}(q_i)(t)$ has a nonnegative, interlacing symmetric decomposition with respect to $1$. 
It is routine to verify that the polynomial $\mathcal W(q_i)(t)$ admits the following desired symmetric decomposition
\begin{align*}
\mathcal W(q_i)(t)=a_i(t)+tb_i(t),
\end{align*}
where
\begin{align*}
a_i(t)=(n-v_i)(1+t),\qquad b_i(t)=2v_i-n.
\end{align*}
Since $v_i\ge \frac n2$, both coefficients of $a_i(t)$ and $b_i(t)$ are nonnegative, and moreover $b_i(t)$ interlaces $a_i(t)$. Here we also allow $v_i=n$ or $v_i=n/2$ by the usual convention that the identically zero polynomial interlaces or is interlaced by any other real-rooted polynomial.

We continue to show that for each pair $\{j,j'\}$ of $\mathbf{B}$ the polynomial 
$\mathcal{W}(q_{j}q_{j'})(t)$ has a nonnegative, interlacing symmetric decomposition with respect to $2$.
By Theorem \ref{thm:haremard-real-brenti} and the hypothesis $v_{j}+v_{j'}\ge n$, we may assume that $v_{j}<n/2$, and hence $v_{j'}> n/2$. 
One can check that 
\begin{align*}
\mathcal W(q_{j}q_{j'})(t)=a_{j,j'}(t)+tb_{j,j'}(t),    
\end{align*}
where 
\begin{align*}
a_{j,j'}(t)={(n-v_{j})(n-v_{j'})t^2+2(n^2-v_{j}v_{j'})t+(n-v_{j})(n-v_{j'})},    
\end{align*}
and
\begin{align*}
b_{j,j'}(t)=n(v_{j}+v_{j'}-n)(1+t).    
\end{align*}
Note that both $a_{j,j'}(t)$ and $b_{j,j'}(t)$ have nonnegative coefficients. Furthermore, by the hypothesis, if $v_{j'}<n$ then
\begin{align*}
(n^2-v_{j}v_{j'})>(n-v_{j})(n-v_{j'}).
\end{align*}
Thus, $a_{j,j'}(-1)<0$, and the symmetric quadratic polynomial $a_{j,j'}(t)$ is real-rooted, with one zero on each side of $-1$.
Since $b_{j,j'}(t)$ is a nonnegative multiple of $(1+t)$, it follows that $b_{j,j'}(t)$ interlaces $a_{j,j'}(t)$, as desired. 
{If $v_{j'}=n$, then $a_{j,j'}(t)={2(n-v_{j})t}$ and $b_{j,j'}(t)={v_{j}(1+t)}$. In this case, $b_{j,j'}(t)$ also interlaces $a_{j,j'}(t)$.} Therefore, $\mathcal W\bigl(p_{j}p_{j'}\bigr)(t)$ has a nonnegative, interlacing symmetric decomposition with respect to $2$.

Recall that $\mathcal W\!\left(\prod_{i=0}^m q_i\right)(t)=g^{\mathbf v}_{n,m}(t)$.
By properly grouping $q_i(t)$ into pairs or singletons according to the set partition $\mathbf{B}$ and then iterating Theorem~\ref{thm:haremard-real-brenti}, we find that $g^{\mathbf v}_{n,m}(t)$
has a nonnegative, interlacing symmetric decomposition with respect to $m+1$. This completes the proof.
\end{proof}

We are almost ready to prove Theorem \ref{interlacing-sd}. However, the proof given here will depend on the value of the parameter $k$ of
$\NC^{(k)}_W$. For the case of $k=1$, the following remark is useful. 

\begin{rem}\label{rem_reciprocity}
Given a polynomial $f(t)$ of degree $r\geq 1$, let $\hat{f}(t)=t^{r+1}f(1/t)$. 
Suppose that $\hat{f}(t)=a(t)+tb(t)$ is a nonnegative, interlacing symmetric decomposition
with respect to $r+1$. 
Since $\deg(f(t))=r$, we have $\hat{f}(0)=0$, and hence $a(0)=0$. 
Thus $a(t)=tc(t)$ for some polynomial $c(t)$. 
Since $\mathcal{I}_{r+1}(a)(t)=a(t)$, it follows that $\mathcal{I}_{r-1}(c)(t)=c(t)$. Moreover, $b(t)$ is symmetric with respect to $r$, and therefore $f(t)=t^{r+1}\hat{f}(1/t)=b(t)+tc(t)$.
Finally, from $b(t)\preceq a(t)=tc(t)$ and \cite[Section 3]{Wagner-1992} it follows that $c(t)\preceq b(t)$. Thus, $f(t)$ has a nonnegative, interlacing symmetric decomposition with respect to $r$.
\end{rem}

We also need the following characterization of $\NC^{(k)}_W$, which is an immediate consequence of the component-wise definition of reflection absolute order. 

\begin{prop}\label{prop:NCk-product-irreducible}
Let $W$ be a Coxeter group with irreducible components $W_1,\,\ldots,\,W_\ell.$ 
Then for any integer $k\geq 1$ {there is an isomorphism of posets} 
\begin{align*}
\NC_W^{(k)}\cong\NC_{W_1}^{(k)}\times\cdots\times \NC_{W_\ell}^{(k)}.    
\end{align*}
\end{prop}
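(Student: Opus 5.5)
The plan is to prove the isomorphism directly from Armstrong's definition of $\NC^{(k)}_W$ and the way the absolute length and absolute order behave on a direct product of groups. Write $W=W_1\times\cdots\times W_\ell$, where $W_i$ is generated by the simple reflections in its component of the Coxeter diagram. Every reflection of $W$ lies in exactly one factor $W_i$, since reflections are conjugates of simple reflections and conjugation preserves each factor. Consequently, for $w=(w_1,\ldots,w_\ell)$ the absolute (reflection) length is additive:
\begin{align*}
\ell_T(w)=\sum_{i=1}^{\ell}\ell_{T_i}(w_i).
\end{align*}
To see this, a reduced $T$-word for $w$ splits into subwords in each $W_i$, because reflections from different factors commute. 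Hence $u\le_T w$, i.e. $\ell_T(u)+\ell_T(u^{-1}w)=\ell_T(w)$, holds if and only if $u_i\le_{T_i} w_i$ for every $i$. This uses the additivity together with the triangle inequality in each factor, which forces equality componentwise.

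Next, I fix a Coxeter element $c=(c_1,\ldots,c_\ell)$ of $W$. A product of all simple reflections in some order has components that are Coxeter elements $c_i$ of the $W_i$. Recall that Armstrong's $\NC^{(k)}_W$ consists of multichains $w_1\le_T\cdots\le_T w_k\le_T c$, or equivalently of $k$-delta sequences $(\delta_0,\delta_1,\ldots,\delta_k)$ with $c=\delta_0\delta_1\cdots\delta_k$ and $\sum\ell_T(\delta_j)=\ell_T(c)$. It carries the partial order given componentwise by $\le_T$ on the $\delta_j$ for $j\ge1$ (in Armstrong's convention, $(\delta)\le(\epsilon)$ iff $\delta_j\ge_T\epsilon_j$ for all $j\ge1$, in whichever direction he uses). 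By the componentwise description of $\le_T$ above, a multichain in $[1,c]_T$ is exactly an $\ell$-tuple of multichains in the intervals $[1,c_i]_{T_i}$. Likewise, the length-additivity condition on delta sequences splits into the same condition in each factor: the total length is a sum of the factor totals, and each factor satisfies $\sum_j\ell_{T_i}(\delta_{j,i})\ge\ell_{T_i}(c_i)$ by the triangle inequality. Thus the map $(\delta_j)_j\mapsto\bigl((\delta_{j,i})_j\bigr)_i$ is a bijection $\NC^{(k)}_W\to\prod_i\NC^{(k)}_{W_i}$. Because the order relations on both sides are componentwise comparisons in $\le_T$, which themselves decompose componentwise, the bijection and its inverse are order-preserving.

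Finally, I would note that $\NC^{(k)}_W$ does not depend on the choice of Coxeter element up to isomorphism, since all Coxeter elements are conjugate and conjugation is an automorphism of $(W,\le_T)$. So the choice of $c$ as a product of Coxeter elements of the factors is harmless.

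The only real obstacle is the additivity of $\ell_T$ and the resulting componentwise characterization of $\le_T$; everything else is bookkeeping. The key point there is that each reflection of $W$ lies in a single irreducible factor, which I would justify via the root system: the roots of $W$ are the disjoint union of the roots of the $W_i$.
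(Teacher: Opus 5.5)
Your proposal is correct and is essentially the paper's argument. The paper only remarks that the isomorphism is an immediate consequence of the componentwise nature of the absolute reflection order, and you supply exactly those details: every reflection lies in a single factor, $\ell_T$ is additive, hence $\le_T$ is componentwise, and so the delta sequences (or multichains) and Armstrong's order on $\NC^{(k)}_W$ split factor by factor.
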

Now we are able to prove Theorem \ref{interlacing-sd}.

\begin{proof}[Proof of Theorem \ref{interlacing-sd}]
Let us first consider the interlacing symmetric decomposition of \(h(\Delta(\NC_W),t)\).
Let $W$ be a finite Coxeter group with irreducible components $W_1,\ldots,W_\ell$. 
Suppose that each $W_i$ is of rank $n_i$ with Coxeter number $h_i$ and degrees $0<d_{i,1}\le d_{i,2}\le \cdots \le d_{i,n_i}=h_i$. Then
\(r_W=n_1+\cdots+n_\ell=r+1\). By \eqref{eq_finite Coxeter}, for $m\ge 0$, we see that
\begin{align*}
\mathcal Z(\NC_W,m)=\prod_{i=1}^{\ell}\frac{h_i^{n_i}}{|W_i|\mathfrak{h}^{n_i}}\prod_{j=1}^{n_i}((m+1)\mathfrak{h}-(h_i-d_{i,j})\mathfrak{h}/h_i).
\end{align*}
where $\mathfrak{h}=h_1h_2\cdots h_{\ell}$. Since
\begin{align*}
(-1)^{r+1}\mathcal Z(\NC_W,-m-1)=\prod_{i=1}^{\ell}\frac{h_i^{n_i}}{|W_i|\mathfrak{h}^{n_i}}\prod_{j=1}^{n_i}((m+1)\mathfrak{h}-(\mathfrak{h}-v_{ij})),
\end{align*}
we have 
\begin{align*}
\mathcal{W}({(-1)^{r+1}\mathcal Z(\NC_W,-m-1)})(t)=\prod_{i=1}^{\ell}\frac{h_i^{n_i}}{|W_i|\mathfrak{h}^{n_i}}g^{(u_{1,1},\ldots,u_{1,n_1},\ldots,u_{\ell,1},\ldots,u_{\ell,n_{\ell}})}_{\mathfrak{h},r}(t),
\end{align*}
where $u_{i,j}=\mathfrak{h}d_{i,j}/h_i$ for each $1\leq i\leq \ell$ and $1\leq j\leq n_i$. By \cite[Figure~2.7]{Armstrong-2009}, for each fixed $i$, the degrees can be partitioned into pairs $\{d_{i,j},d_{i,j'}\}$ with $d_{i,j}+d_{i,j'}=h_i+2$, and possibly one singleton $\{d_{i,j}\}$ with $d_{i,j}=(h_i+2)/2$.
Thus, for each corresponding pair $\{u_{i,j},u_{i,j'}\}$ we have
$u_{i,j}+u_{i,j'}=\mathfrak{h}(h_i+2)/h_i\ge \mathfrak{h}$, and for the
possible singleton $\{u_{i,j}\}$ we have
$u_{i,j}=\mathfrak{h}(h_i+2)/(2h_i)\ge \mathfrak{h}/2$. Namely, the sequence 
\begin{align*}
(u_{1,1},\ldots,u_{1,n_1},\ldots,u_{\ell,1},\ldots,u_{\ell,n_{\ell}})    
\end{align*}
satisfies the hypothesis of Theorem \ref{eq-results} with $n=\mathfrak{h}$ and $m=r$. 
Therefore, the polynomial $\mathcal{W}({(-1)^{r+1}\mathcal Z(\NC_W,-m-1)})(t)$ has a nonnegative, interlacing symmetric decomposition with respect to $r+1$. By the reciprocity law of rational power series \cite[Lemma 3.15.11]{Stanley-2012}, it is straightforward
to verify that
\begin{align*}
\mathcal{W}({(-1)^{r+1}\mathcal Z(\NC_W,-m-1)})(t)=t^{r+1}h(\Delta(\NC_W),1/t).
\end{align*}
Remark \ref{rem_reciprocity} tells us that \(h(\Delta(\NC_W),t)\) has a nonnegative, interlacing symmetric decomposition with respect to $r$, as desired.

We proceed to consider the interlacing symmetric decomposition of $h(\Delta(\NC^{(k)}_W),t)$ for $k\geq 2$.
From \eqref{eq-product} and Proposition \ref{prop:NCk-product-irreducible} it follows that
\begin{align}\label{eq-chain-k-noncrossing}
h\left(\Delta(\NC_W^{(k)}),t\right)=\prod_{i=1}^{\ell}\frac{h_i^{n_i}}{|W_i|\mathfrak{h}^{n_i}}g^{(\hat{v}_{1,1},\ldots,\hat{v}_{1,n_1},\ldots,\hat{v}_{\ell,1},\ldots,\hat{v}_{\ell,n_{\ell}})}_{k\mathfrak{h},n_1+\cdots+n_{\ell}-1}(t),
\end{align}
where $\hat{v}_{i,j}=(kh_i-d_{i,n_i+1-j})\mathfrak{h}/h_i$ for each $1\leq i\leq \ell$ and $1\leq j\leq n_i$. 
Then $k\mathfrak{h}/2\le \hat{v}_{i,j}<k\mathfrak{h}$ for all $i,j$ when $k\ge2$. 
By Theorem~\ref{eq-results}, the polynomial $h(\Delta(\NC^{(k)}_W),t)$ has a nonnegative, interlacing symmetric decomposition with respect to $r_W$.
\end{proof}

By \eqref{eq-chain-k-noncrossing} and Theorem \ref{thm-main-realzeros}, we have the following result, a generalization of \cite[Theorem 3 (a)]{Athanasiadis-Douvropoulos-Kalampogia-Evangelinou-2024}.

\begin{cor}
The polynomial $h\left(\Delta(\NC_W^{(k)}),t\right)$ has only real zeros for any $k\geq 1$ and any finite Coxeter group $W$. 
\end{cor}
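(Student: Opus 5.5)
The corollary follows from the operator expression \eqref{eq-chain-k-noncrossing} combined with Theorem \ref{thm-main-realzeros}. I will treat $k\ge 2$ and $k=1$ together, since \eqref{eq-chain-k-noncrossing} was derived from \eqref{eq-product}, Proposition \ref{prop-hpol-operator} and Proposition \ref{prop:NCk-product-irreducible}, and that derivation never used $k\ge 2$.

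Let $W$ have irreducible components $W_1,\ldots,W_\ell$, with ranks $n_i$, Coxeter numbers $h_i$ and degrees $0<d_{i,1}\le\cdots\le d_{i,n_i}=h_i$. Put $\mathfrak h=h_1\cdots h_\ell$. By Proposition \ref{prop-hpol-operator}, each $\mathcal Z(\NC^{(k)}_{W_i},j)$ is, up to a positive constant, $\prod_{s}((j+1)kh_i-(kh_i-d_{i,s}))$. Rescaling every factor by $\mathfrak h/h_i$ gives the common form $(j+1)k\mathfrak h-\hat v_{i,s}$ with
\[
\hat v_{i,s}=(kh_i-d_{i,n_i+1-s})\,\mathfrak h/h_i .
\]
Taking the product over $i$, using $\NC^{(k)}_W\cong\prod_i\NC^{(k)}_{W_i}$ and \eqref{eq-product}, yields \eqref{eq-chain-k-noncrossing}. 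The rank bookkeeping is consistent: each $\NC^{(k)}_{W_i}$ has longest chain of length $n_i$, so the denominators multiply to $(1-t)^{n_1+\cdots+n_\ell+1}=(1-t)^{m+2}$ with $m=r_W-1$. Thus $h(\Delta(\NC^{(k)}_W),t)$ is a positive constant multiple of $g^{\hat{\mathbf v}}_{k\mathfrak h,\,r_W-1}(t)$.

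It remains to check the hypothesis of Theorem \ref{thm-main-realzeros}, namely $0\le\hat v_{i,s}\le k\mathfrak h$ for all $i,s$.
\begin{itemize}
\item \emph{Upper bound.} Since $d_{i,s}>0$, we have $\hat v_{i,s}<k h_i\cdot\mathfrak h/h_i=k\mathfrak h$.
\item \emph{Lower bound.} Since $d_{i,s}\le h_i\le kh_i$ for $k\ge1$, we have $\hat v_{i,s}\ge0$.
\end{itemize}
Theorem \ref{thm-main-realzeros} then gives that $g^{\hat{\mathbf v}}_{k\mathfrak h,\,r_W-1}(t)$ has only nonpositive real zeros. A positive scalar multiple preserves this, which proves the corollary.

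The only delicate point is the bookkeeping in the previous paragraph: confirming that the product formula \eqref{eq-chain-k-noncrossing} holds for all $k\ge1$ (including $k=1$, where it reduces to \eqref{eq_finite Coxeter}), and that the rescaled parameters put every factor on the common modulus $k\mathfrak h$. Once that is in place, the range check on the $\hat v_{i,s}$ is immediate.
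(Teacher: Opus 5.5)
Your proof is correct and is the paper's own argument: the paper deduces the corollary directly from the product formula \eqref{eq-chain-k-noncrossing}, which writes $h(\Delta(\NC^{(k)}_W),t)$ as a positive multiple of $g^{\hat{\mathbf v}}_{k\mathfrak h,\,r_W-1}(t)$, combined with Theorem \ref{thm-main-realzeros}. Your check that $(k-1)\mathfrak h\le \hat v_{i,s}<k\mathfrak h$, and your remark that the derivation of \eqref{eq-chain-k-noncrossing} does not use $k\ge 2$ (so the case $k=1$ is covered), supply the details the paper leaves implicit.
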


By \eqref{eq-AB-interpret} and Theorem \ref{eq-results}, we also obtain the following result, whose proof is omitted here. 
Note that the interlacing symmetric decomposition of $\mathcal{A}_{n,r}(t)$ is implied by \cite[Corollary 3.2 and Theorem 2.6]{Branden-2021}.

\begin{cor}
For any $n\ge 1$, $r\ge 2$ and $q\ge 1$, both $\mathcal{A}_{n,r}(t)$ and $\mathcal{B}_n(t,q)$ have nonnegative, interlacing symmetric decompositions with respect to $n$.
\end{cor}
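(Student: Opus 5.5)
The claim is that both $\mathcal{A}_{n,r}(t)$ and $\mathcal{B}_n(t,q)$ have nonnegative, interlacing symmetric decompositions with respect to $n$. The plan is to feed the operator representations \eqref{eq-AB-interpret} of Proposition \ref{eq-eulerianpol-operator} directly into Theorem \ref{eq-results}. By \eqref{eq-AB-interpret} we have
\begin{align*}
\mathcal{A}_{n,r}(t)=g^{(r-1,\ldots,r-1)}_{r,n-1}(t),\qquad \mathcal{B}_n(t,q)=g^{(q,\ldots,q)}_{1+q,n-1}(t).
\end{align*}
In both cases the parameter vector $\mathbf v$ has $m+1=n$ entries and $m=n-1$. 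So Theorem \ref{eq-results}, if it applies, gives a decomposition with respect to $m+1=n$, which is exactly the degree required.

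It remains to check the hypotheses of Theorem \ref{eq-results}.
\begin{itemize}
\item \emph{Colored Eulerian case.} The operator parameter is $r$ and every $v_i=r-1$. We have $0\le r-1\le r$, and $r-1\ge r/2$ holds precisely when $r\ge 2$. So we take $\mathbf{B}$ to be the partition of $\{0,1,\ldots,n-1\}$ into singletons.
\item \emph{Hyperoctahedral case.} The operator parameter is $1+q$ and every $v_i=q$. We have $0\le q\le 1+q$, and $q\ge (1+q)/2$ holds precisely when $q\ge 1$. Again the partition into singletons works.
\end{itemize}
Theorem \ref{eq-results} then yields, in each case, a nonnegative, interlacing symmetric decomposition with respect to $n$.

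The only step needing care is confirming that \eqref{eq-AB-interpret} is an exact identity, with no leftover scalar and the right index $m=n-1$. This follows from Proposition \ref{eq-eulerianpol-operator} in the case $\alpha=\beta+\gamma$: we have $(r,r-1,1)$ with $r=(r-1)+1$, and $(1+q,q,1)$ with $1+q=q+1$. A positive scalar factor would not matter anyway, since it preserves the decomposition properties.

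Two small points deserve a remark. First, when $q$ is a real number at least $1$ rather than an integer, Theorem \ref{eq-results} still applies, because it allows real $v_i$. Second, the boundary cases $v_i=n/2$ (namely $r=2$ or $q=1$) produce $b_i(t)\equiv 0$, and these are covered by the convention on the zero polynomial already used in the proof of Theorem \ref{eq-results}.
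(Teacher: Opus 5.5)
Your proposal is correct and is the argument the paper intends; the paper omits the proof, citing only \eqref{eq-AB-interpret} and Theorem \ref{eq-results}. Taking $\mathbf{B}$ to be the partition into singletons, the conditions $r-1\ge r/2$ and $q\ge (1+q)/2$ reduce to $r\ge 2$ and $q\ge 1$, which gives decompositions with respect to $m+1=n$.

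One small correction to your remark on non-integer $q$: the relevant point is not that the $v_i$ may be real, but that the operator parameter $1+q$ need not be an integer. Neither the proof of Theorem \ref{eq-results} nor the identity $\mathcal{B}_n(t,q)=g^{(q,\ldots,q)}_{1+q,n-1}(t)$ uses integrality; the identity follows directly from \eqref{eq-D-expression}.
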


We conclude this section by discussing the symmetry of $g^{\mathbf{v}}_{n,m}(t)$. 
For a polynomial $q(t)$ of degree $d$ with $\mathcal{I}_s\mathcal{W}(q)(t)=\mathcal{W}(q)(t)$ and $s\le d$, 
we follow \cite{Branden-2024} to define the defect $\mathrm{def}(q(t))$ of $q(t)$ as $d-s$. Br\"and\'en, Ferroni and Jochemko obtained the following result. 

\begin{thm}[{\cite[Theorem 1.3]{Branden-2024}}]\label{thm-symmetry-preservation}
If $\mathcal{W}(q_1)(t)$ and $\mathcal{W}(q_2)(t)$ are symmetric polynomials and $\mathrm{def}(q_1(t))=\mathrm{def}(q_2(t))$, then $\mathcal{W}(q_1q_2)(t)$ is symmetric, and $\mathrm{def}(q_1(t)q_2(t))=\mathrm{def}(q_1(t))=\mathrm{def}(q_2(t))$.
\end{thm}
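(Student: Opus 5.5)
The plan is to convert symmetry of $\mathcal{W}(q)(t)$ into a functional equation for $q$ itself. Products of polynomials obviously respect such an equation, so the result should follow at once.

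\emph{Step 1 (reciprocity).} Let $q$ have degree $d$. By the reciprocity law for rational power series \cite[Lemma 3.15.11]{Stanley-2012}, the identity $\sum_{j\ge0}q(j)t^j=\mathcal{W}(q)(t)/(1-t)^{d+1}$ gives, as rational functions,
\begin{align*}
\sum_{j\ge1}q(-j)t^j=-\sum_{j\ge0}q(j)t^{-j}=(-1)^{d}\,\frac{t^{d+1}\mathcal{W}(q)(1/t)}{(1-t)^{d+1}}.
\end{align*}
I would verify this by substituting $t\mapsto 1/t$ in the defining identity and tracking the sign $(1-1/t)^{-(d+1)}=(-1)^{d+1}t^{d+1}(1-t)^{-(d+1)}$.

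\emph{Step 2 (characterization).} Write $e=d-s$. I claim that $\mathcal{I}_s\mathcal{W}(q)=\mathcal{W}(q)$ holds if and only if
\begin{align*}
q(-x-e-1)=(-1)^d q(x)
\end{align*}
as a polynomial identity.

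For the forward direction, symmetry gives $t^{d+1}\mathcal{W}(q)(1/t)=t^{e+1}t^{s}\mathcal{W}(q)(1/t)=t^{e+1}\mathcal{W}(q)(t)$. Substituting into Step 1 yields
\begin{align*}
\sum_{j\ge1}q(-j)t^j=(-1)^d t^{e+1}\sum_{j\ge0}q(j)t^j.
\end{align*}
Comparing coefficients gives $q(-j-e-1)=(-1)^dq(j)$ for all $j\ge0$. Both sides are polynomials in $j$, so this extends to a polynomial identity.

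For the converse, run the computation backwards. The identity gives equality of the two series, hence $t^{d+1}\mathcal{W}(q)(1/t)=t^{e+1}\mathcal{W}(q)(t)$, which is exactly $\mathcal{I}_{d-e}\mathcal{W}(q)=\mathcal{W}(q)$. A useful byproduct is that the defect is intrinsic: it is determined by the functional equation, namely by the shift $e$.

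\emph{Step 3 (product).} Suppose $\mathrm{def}(q_1)=\mathrm{def}(q_2)=e$, with degrees $d_1,d_2$. Step 2 gives $q_i(-x-e-1)=(-1)^{d_i}q_i(x)$ for $i=1,2$. Multiplying,
\begin{align*}
(q_1q_2)(-x-e-1)=(-1)^{d_1+d_2}(q_1q_2)(x).
\end{align*}
Since $\deg(q_1q_2)=d_1+d_2$, the converse in Step 2 shows that $\mathcal{W}(q_1q_2)$ is symmetric with respect to $d_1+d_2-e$. Hence $\mathrm{def}(q_1q_2)=e$, as claimed.

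The main obstacle is the bookkeeping in Steps 1–2. The signs must come out as $(-1)^d$, with $d$ the exact degree of $q$. One must also handle degenerate cases where $\mathcal{W}(q)$ has degree less than $s$ or vanishes at $0$. These are resolved by working throughout with the identity $t^{d+1}\mathcal{W}(q)(1/t)=t^{e+1}\mathcal{W}(q)(t)$ of Laurent polynomials, rather than with actual degrees of $\mathcal{W}(q)$.
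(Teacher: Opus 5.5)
Your Step~2 converse is false, and Step~3 depends on it. The functional equation $q(-x-e-1)=(-1)^dq(x)$ only relates $q(-j)$ for $j\ge e+1$ to $q(j-e-1)$. It does not force $q(-1)=\cdots=q(-e)=0$.

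Yet by your Step~1, the lowest power of $t$ in $t^{d+1}\mathcal{W}(q)(1/t)/(1-t)^{d+1}$ is $t^{d+1-\deg\mathcal{W}(q)}$. Hence $\deg\mathcal{W}(q)\le d-e$, which symmetry with respect to $d-e$ requires, holds exactly when $q(-1)=\cdots=q(-e)=0$. Running the computation backwards therefore only gives $\sum_{j\ge e+1}q(-j)t^j=(-1)^dt^{e+1}\sum_{j\ge0}q(j)t^j$. That is not the series $\sum_{j\ge1}q(-j)t^j$ unless those values vanish.

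Concretely, $q(x)=x(x+2)$ satisfies $q(-x-2)=q(x)$, which is your equation with $d=2$, $e=1$. But $\sum_{j\ge0}q(j)t^j=(3t-t^2)/(1-t)^3$, and $3t-t^2$ is not symmetric with respect to $1$. Similarly, a nonzero constant satisfies your equation for every $e$ yet has defect $0$. So the claimed ``byproduct'' that the shift determines the defect is also wrong.

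The repair is short, because you discarded information in the forward direction. Comparing the coefficients of $t^1,\ldots,t^e$ in $\sum_{j\ge1}q(-j)t^j=(-1)^dt^{e+1}\sum_{j\ge0}q(j)t^j$ gives $q(-1)=\cdots=q(-e)=0$. The correct characterization is: $\mathcal{I}_{d-e}\mathcal{W}(q)=\mathcal{W}(q)$ if and only if $q(-k)=0$ for $1\le k\le e$ \emph{and} $q(-x-e-1)=(-1)^dq(x)$. With both conditions, the backward computation does yield $t^{d+1}\mathcal{W}(q)(1/t)=t^{e+1}\mathcal{W}(q)(t)$.

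Both conditions survive multiplication, since $(q_1q_2)(-k)=q_1(-k)q_2(-k)=0$. So Step~3 then shows that $\mathcal{W}(q_1q_2)$ is symmetric with respect to $d_1+d_2-e$. The defect equals $e$ because a nonzero polynomial is symmetric with respect to at most one $s$. The paper itself only quotes this result from Br\"and\'en, Ferroni and Jochemko, so there is no in-paper proof to compare routes with.
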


Based on their result, we obtain a sufficient condition for the symmetry of $g^{\mathbf{v}}_{n,m}(t)$ as follows. 

\begin{thm}\label{symmetry-thm}
Let $g^{\mathbf{v}}_{n,m}(t)$ be the polynomial defined by \eqref{eq:defi-fnk(x)} for some $n\geq 1, m\geq 0$ and $\mathbf{v} = (v_0, v_1, \ldots, v_m)\in \mathbb{R}_{\geq 0}^{m+1}$ with $0\le v_i\le n$ for each $0\leq i\leq m$.
If there exists a set partition $\mathbf{B}$ of 
$\{0,1,\ldots,m\}$ into singletons or pairs such that $v_i={n}/{2}$ for each singleton $\{i\}\in \mathbf{B}$ and 
$v_{j}+v_{j'}=n$ for each pair $\{j,j'\}\in \mathbf{B}$, then $g^{\mathbf{v}}_{n,m}(t)$ is symmetric.
\end{thm}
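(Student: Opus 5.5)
The plan is to run the same Hadamard-product factorization as in the proofs of Theorems \ref{thm-main-realzeros} and \ref{eq-results}, replacing interlacing by symmetry and Theorem \ref{thm:haremard-real-brenti} by Theorem \ref{thm-symmetry-preservation}. For each $0\le i\le m$ put $q_i(t)=nt+n-v_i$, so that $\mathcal{W}(q_i)(t)=v_it+n-v_i$. Recall that
$$g^{\mathbf v}_{n,m}(t)=\mathcal{W}\Bigl(\prod_{i=0}^m q_i\Bigr)(t).$$
I will group the $q_i$ according to the blocks of $\mathbf{B}$. The goal is to show that each block polynomial has a symmetric $\mathcal{W}$-transform of defect $0$, and then to multiply the blocks together.

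\emph{Singletons.} If $\{i\}\in\mathbf{B}$, then $v_i=n/2$, so $\mathcal{W}(q_i)(t)=\frac n2(1+t)$. This satisfies $\mathcal{I}_1\mathcal{W}(q_i)=\mathcal{W}(q_i)$, and since $\deg q_i=1$ the defect is $0$.

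\emph{Pairs.} If $\{j,j'\}\in\mathbf{B}$, I use the formula computed in the proof of Theorem \ref{eq-results}: $\mathcal{W}(q_jq_{j'})(t)=a_{j,j'}(t)+tb_{j,j'}(t)$ with $b_{j,j'}(t)=n(v_j+v_{j'}-n)(1+t)$. That formula is a polynomial identity and so holds for arbitrary $v_j,v_{j'}$. The hypothesis $v_j+v_{j'}=n$ makes $b_{j,j'}\equiv0$, so $\mathcal{W}(q_jq_{j'})=a_{j,j'}$, which is palindromic with $\mathcal{I}_2a_{j,j'}=a_{j,j'}$. Since $\deg(q_jq_{j'})=2$, the defect is again $0$. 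The degenerate case $\{v_j,v_{j'}\}=\{0,n\}$ gives $a_{j,j'}(t)=2nt$, which is still fixed by $\mathcal{I}_2$, so it is consistent.

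\emph{Combining.} Every block polynomial now has a symmetric $\mathcal{W}$-transform with defect $0$. Iterating Theorem \ref{thm-symmetry-preservation}, multiplying in one block at a time, shows that $\mathcal{W}(\prod_i q_i)=g^{\mathbf v}_{n,m}(t)$ is symmetric with defect $0$. Since $\deg\prod_i q_i=m+1$, this means $\mathcal{I}_{m+1}g^{\mathbf v}_{n,m}=g^{\mathbf v}_{n,m}$.

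The only delicate point I expect is bookkeeping: checking that the defect is well defined and equal to $0$ in the edge cases, such as $v_i\in\{0,n\}$, where $\mathcal{W}$ has lower degree or a vanishing constant term. For a nonzero polynomial the center of symmetry is forced, so this should be a short check. As a backup, I would verify symmetry directly by Stanley's reciprocity, as in the proof of Theorem \ref{interlacing-sd}. Set
$$F(x)=\prod_i\bigl(n(x+1)-v_i\bigr).$$
The pairing hypothesis gives $F(-x-2)=(-1)^{m+1}F(x)$, because each pair or singleton factor is sent to its own negative. By \cite[Lemma 3.15.11]{Stanley-2012}, this identity is equivalent to $t^{m+1}g^{\mathbf v}_{n,m}(1/t)=g^{\mathbf v}_{n,m}(t)$.
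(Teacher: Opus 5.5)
Your main argument is correct and is the same as the paper's proof: with $q_i(t)=nt+n-v_i$, each singleton block and each pair block has a symmetric $\mathcal{W}$-transform of defect $0$, and iterating Theorem~\ref{thm-symmetry-preservation} gives $\mathcal{I}_{m+1}g^{\mathbf v}_{n,m}=g^{\mathbf v}_{n,m}$. There are two harmless slips: in the degenerate case $\{v_j,v_{j'}\}=\{0,n\}$ one gets $\mathcal{W}(q_jq_{j'})(t)=2n^2t$, not $2nt$; and in your backup, the reciprocity condition equivalent to $\mathcal{I}_{m+1}g^{\mathbf v}_{n,m}=g^{\mathbf v}_{n,m}$ is $F(-x-1)=(-1)^{m+1}F(x)$, which the pairing does give, rather than $F(-x-2)=(-1)^{m+1}F(x)$, which already fails for $m=0$, $v_0=n/2$.
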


\begin{proof}
Along the lines of the proof of Theorem \ref{eq-results} it is easy to see that 
for each singleton $\{i\}\in \mathbf{B}$ the polynomial $\mathcal{W}\left(q_i\right)(t)$ is symmetric with $\mathrm{def}(q_i)=0$, 
and for each pair $\{j,j'\}\in \mathbf{B}$ the polynomial $\mathcal{W}(q_{j}q_{j'})(t)$ is also symmetric with $\mathrm{def}(q_{j}q_{j'})=0$. 
By recalling $\mathcal W\!\left(\prod_{i=0}^m q_i\right)(t)=g^{\mathbf v}_{n,m}(t)$ and iterating Theorem \ref{thm-symmetry-preservation}, we obtain the desired result.
\end{proof}

Combining Proposition \ref{prop-ppf-operator} and Theorem \ref{thm-symmetry-preservation} leads to the symmetry of 
$\PPP_n(t)$, which is not obvious from the definition. 

\begin{cor}
For $n\geq 1$, the polynomial $\PPP_n(t)$ is symmetric with center of symmetry $n/2$.
\end{cor}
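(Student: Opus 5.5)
The plan is to read the symmetry off the operator description of $\PPP_n(t)$ and then apply Theorem \ref{symmetry-thm}.

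First, by Proposition \ref{prop-ppf-operator} we have $\PPP_n(t)=g^{(0,1,\ldots,n-1)}_{n-1,n-1}(t)/((n-1)\cdot n!)$ for $n\ge 2$. So it suffices to show that $g^{\mathbf v}_{N,m}(t)$ is symmetric with the right center, where $N=n-1$, $m=n-1$ and $\mathbf v=(0,1,\ldots,n-1)$. The case $n=1$ is trivial, since $\PPP_1(t)$ is a constant and its center $1/2$ is to be read with respect to degree $1$; I will check that separately by hand.

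Next I verify the hypothesis of Theorem \ref{symmetry-thm}. Every entry satisfies $0\le v_i=i\le n-1=N$. Partition $\{0,1,\ldots,n-1\}$ into the pairs $\{j,n-1-j\}$. Each such pair has $v_j+v_{n-1-j}=n-1=N$. When $n$ is odd there is one leftover singleton $\{(n-1)/2\}$, and it has $v_{(n-1)/2}=N/2$. Hence $g^{\mathbf v}_{N,m}(t)$ is symmetric.

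The one point needing care, and the main (mild) obstacle, is identifying the center of symmetry. The proof of Theorem \ref{symmetry-thm} shows that each block is symmetric with defect $0$:
\begin{itemize}
\item for a singleton, $\mathcal W(q_i)$ is symmetric with respect to $1$;
\item for a pair, $\mathcal W(q_jq_{j'})$ is symmetric with respect to $2$.
\end{itemize}
I will recheck the degenerate pair $\{0,n-1\}$, where $v_{n-1}=N$. There $\mathcal W(q_0q_{n-1})(t)=Nt$, which is indeed symmetric with respect to $2$. By Theorem \ref{thm-symmetry-preservation} the product then has defect $0$, so $\mathcal I_{m+1}(g^{\mathbf v}_{N,m})=g^{\mathbf v}_{N,m}$, i.e.\ $\mathcal I_n(\PPP_n)=\PPP_n$. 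The center of symmetry is therefore $n/2$.

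As a consistency check I will confirm that this is compatible with degrees. Since $v_0=0$, we have $\deg g^{\mathbf v}_{N,m}\le m=n-1$, so symmetry with respect to $n$ forces the constant term to vanish. Indeed $\PPP_n(0)=0$, because no prime parking function of length $n\ge2$ is strictly decreasing: distinct values would violate $b_2<2$. Thus $\PPP_n(t)=t\,c(t)$ with $c$ symmetric with respect to $n-2$, which again gives center $n/2$. This completes the plan.
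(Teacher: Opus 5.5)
For $n\ge 2$ your argument matches the paper's. You combine Proposition~\ref{prop-ppf-operator} with defect preservation (Theorem~\ref{thm-symmetry-preservation}, packaged as Theorem~\ref{symmetry-thm}), using the pairing $\{j,n-1-j\}$ of $\mathbf v=(0,1,\ldots,n-1)$ with $N=n-1$. Your reading of the center via defect $0$, giving $\mathcal{I}_n(\PPP_n)=\PPP_n$, is correct. One small slip: $q_0q_{n-1}=N^2t(t+1)$ and $\sum_{j\ge0}j(j+1)t^j=2t/(1-t)^3$, so $\mathcal{W}(q_0q_{n-1})(t)=2N^2t$, not $Nt$. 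It is still a multiple of $t$, hence symmetric with respect to $2$, so nothing changes.

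The step that actually fails is your treatment of $n=1$, which you call trivial. Here $\PPF_1=\{(1)\}$ and $\des((1))=0$, so $\PPP_1(t)=t^{0}=1$. But $\mathcal{I}_1(1)(t)=t\neq 1$, so $\PPP_1$ is symmetric with center $0$, not $1/2$, and no hand check can confirm the claim.

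Your own consistency check already detects this. Symmetry with respect to $n$, together with $\deg\le n-1$, forces a vanishing constant term. Your proof that $\PPP_n(0)=0$ uses $b_2<2$, which is only available when $n\ge 2$. For $n=1$ the sequence $(1)$ is vacuously strictly decreasing, so $\PPP_1(0)=1$.

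The statement as written is false at $n=1$ and holds exactly for $n\ge 2$. That is also the range the paper's one-line argument covers, since Proposition~\ref{prop-ppf-operator} requires $n\ge 2$. For $n=1$ one would have $N=0$, so $O^{(0)}_{0,0}=0$ and the normalizing factor $(n-1)\cdot n!$ vanishes. You should restrict to $n\ge 2$ rather than claim the $n=1$ case.
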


\section{Further results and conjectures}\label{sec 6}

Let us go back to Lascoux's open problem, which asks for a satisfactory expression of ${\mathcal{E}}_{\lambda,\sigma}(t)$.
As mentioned in the Introduction, Lascoux \cite{Lascoux-2016}  provided a combinatorial interpretation of  ${\mathcal{E}}_{\lambda,\sigma}(t)$
for $\lambda=\delta_n=(n-1,n-2,\ldots,1,0)$ and $\sigma=[n,n-1,\ldots,1]$ in terms of the classical Eulerian polynomials. 
The classical Eulerian polynomial $\mathcal{A}_n(t)$ is just $\mathcal{A}_{[n]}(t)$ defined by \eqref{eq-gf-multiset}.
Lascoux obtained the following result. 

\begin{prop}[{\cite[Corollary 6.3]{Lascoux-2016}}]
Given a Young subgroup $\mathfrak{S}_{n_1}\times \mathfrak{S}_{n_2}\times\cdots\times \mathfrak{S}_{n_r}$, if we let $\omega$ denote its element of maximal length $\ell=\binom{n_1}{2}+\binom{n_2}{2}+\cdots+\binom{n_r}{2}$ and let $n=n_1+n_2+\cdots+n_r$, then ${\mathcal{E}}_{\delta_n,\omega}(t)=\mathcal{A}_{\ell}(t)$. 
\end{prop}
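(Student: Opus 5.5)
The plan is to compute the specialization $\kappa_{j\delta_n,\omega}(\mathbf 1)$ explicitly as a polynomial in $j$. I expect to get $(j+1)^{\ell}$, and then read off the numerator of the generating series. This runs in the same spirit as the proof of Proposition \ref{thm_w}.

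\emph{Step 1 (factorization over blocks).} Let the Young subgroup act on consecutive blocks of variables $X_1=(x_1,\ldots,x_{n_1})$, $X_2=(x_{n_1+1},\ldots,x_{n_1+n_2})$, and so on. The maximal element $\omega$ is the product $\omega_1\cdots\omega_r$ of the longest elements of the factors $\mathfrak S_{n_i}$. These factors commute and have disjoint supports, so a reduced word for $\omega$ is a concatenation of reduced words for the $\omega_i$, and
$$\pi_\omega=\pi_{\omega_1}\cdots\pi_{\omega_r}.$$
Here each $\pi_{\omega_i}$ involves only the $\pi_k$ with $s_k\in\mathfrak S_{n_i}$. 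Such an operator treats all variables outside $X_i$ as scalars, exactly as in the proof of Proposition \ref{thm_w}. The monomial $\mathbf x^{j\delta_n}$ factors as a product over blocks. On block $i$, the exponent vector is $j$ times a strictly decreasing run of $n_i$ consecutive integers, which is $j\delta_{n_i}$ plus a constant vector $(jc_i,\ldots,jc_i)$. Hence
$$\kappa_{j\delta_n,\omega}(\mathbf x)=\prod_{i=1}^r (x\text{-monomial } e_{X_i}^{\,jc_i})\cdot \pi_{\omega_{0}^{(n_i)}}\bigl(X_i^{j\delta_{n_i}}\bigr),$$
where $e_{X_i}$ is the product of the variables in $X_i$. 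Since $e_{X_i}$ is symmetric in $X_i$, it commutes with $\pi_{\omega_i}$, and it specializes to $1$.

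\emph{Step 2 (longest element).} For the longest element $\omega_0\in\mathfrak S_m$, it is classical that $\pi_{\omega_0}\mathbf x^{\mu}=s_\mu(x_1,\ldots,x_m)$. Weyl's dimension formula then gives
$$s_{j\delta_m}(\mathbf 1)=\prod_{1\le a<b\le m}\frac{j(b-a)+(b-a)}{b-a}=(j+1)^{\binom m2}.$$
Multiplying over the blocks yields $\kappa_{j\delta_n,\omega}(\mathbf 1)=(j+1)^{\ell}$.

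\emph{Step 3 (generating function).} By \eqref{4} and \eqref{eq-Dm},
$$\sum_{j\ge0}\kappa_{j\delta_n,\omega}(\mathbf 1)t^j=D^{\ell}\Bigl(\frac1{1-t}\Bigr)=\sum_{j\ge0}(j+1)^{\ell}t^j.$$
By \eqref{MacMahon-generating} with $M=[\ell]$, where all $m_i=1$ so the product is $(j+1)^{\ell}$, this equals $\mathcal A_\ell(t)/(1-t)^{\ell+1}$. This is already in lowest terms, because $\mathcal A_\ell(1)=\ell!\neq0$. Therefore $\mathcal E_{\delta_n,\omega}(t)=\mathcal A_\ell(t)$.

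The only step requiring real care is Step 1. One has to check that $\pi_{\omega_i}$ really acts blockwise and commutes with the factors belonging to other blocks and with the symmetric shift monomial. I would verify this directly from \eqref{eq-isobaric-operator}: $\pi_k$ is linear over functions symmetric in $x_k,x_{k+1}$. The identification $\pi_{\omega_0}\mathbf x^\mu=s_\mu$ I would take as the standard Demazure character formula for the top cell.
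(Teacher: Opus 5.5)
Your proof is correct and complete. The paper gives no proof of this statement; it only quotes Lascoux's Corollary 6.3, so your argument supplies a self-contained one.

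Each step checks out:
\begin{itemize}
\item The blockwise factorization holds because $\pi_k(fg)=f\,\pi_k(g)$ whenever $f$ is symmetric in $x_k,x_{k+1}$.
\item The shift on block $i$ is $j\delta_{n_i}+j(n-n_1-\cdots-n_i)(1,\ldots,1)$, as you say.
\item The identity $\pi_{\omega_0}\mathbf x^{\mu}=s_\mu$ is the bialternant formula. It follows in one line from $\pi_{\omega_0}=\partial_{\omega_0}\circ\mathbf x^{\delta_m}$.
\item Weyl's dimension formula gives $(j+1)^{\binom{m}{2}}$, exactly as you compute.
\item Since $\mathcal A_\ell(1)=\ell!\neq 0$, the fraction is in lowest terms.
\end{itemize}

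In the paper's language, you have shown that $D_\omega=D^{\ell}=O^{(0,\ldots,0)}_{1,\ell-1}$. So the statement is also the case $r=1$ of Proposition \ref{eq-eulerianpol-operator}, where $\mathcal A_{\ell,1}=\mathcal A_\ell=g^{(0,\ldots,0)}_{1,\ell-1}$.

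Compared with the iterated-sum computation used for Proposition \ref{thm_w}, your route replaces explicit manipulation of $\pi_i$ on monomials with two standard facts: the top Demazure character is a Schur polynomial, and the Weyl dimension formula holds. This is what makes the longest-element case clean. It also means the method is specific to longest elements of Young subgroups. For permutations such as $[2,3,\ldots,n,1]$, $\kappa_{j\delta_n,\sigma}$ is no longer a product of Schur polynomials, and the paper has to count lattice points directly.
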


Our result (Theorem \ref{main-thm-com-in}) gives an answer to Lascoux's problem for $\lambda=(n-1,n-2,\ldots,1,0)$ and $\sigma=[n, 1,2,\ldots,n-1]$ or $\sigma=[2,3,\ldots,n,1]$.
This result was further generalized to any strict partition $\lambda$ by Gao and Liu \cite{Gao-Liu-2026+}, who gave a combinatorial interpretation of ${\mathcal{E}}_{\lambda,[2,3,\ldots,n,1]}(t)$ and ${\mathcal{E}}_{\lambda,[n, 1,2,\ldots,n-1]}(t)$ in terms of $\mathbf{u}$-parking functions. Given a non-decreasing sequence $\mathbf{u}=(u_1,u_2,\ldots,u_n)$ of positive integers, recall that a $\mathbf{u}$-parking function of length $n$ is a sequence 
$\mathbf{a}=(a_1,a_2,\ldots,a_n)$ of positive integers such that its increasing rearrangement $b_1\leq b_2\leq \cdots\leq b_n$  satisfies $b_{i}\leq u_i$. Clearly, the classical parking functions correspond to $\mathbf{u}=(1,2,\ldots,n)$. 

We would like to point out a connection to Avila, Ferroni and Morales's recent work \cite{Avila-Ferroni-Morales-2026a} on the $h^*$-polynomials of Pitman–Stanley polytopes. 
For any lattice polytope $\mathcal{P}\subset\mathbb{R}^n$, the Ehrhart polynomial $i(\mathcal{P},\,j)$ counts the number of lattice points in its $j$-th dilation.
If $\mathcal{P}$ is of dimension $d$, then there exists a polynomial $h^*_{\mathcal{P}}(t)$ of degree at most d and satisfying
\begin{align}\label{eq-hstarpol}
\sum_{j\geq 0} i(\mathcal{P},\,j) t^j=\frac{h^*_{\mathcal{P}}(t)}{(1-t)^{d+1}}.
\end{align}
The polynomial $h^*_{\mathcal{P}}(t)$ is usually called the Ehrhart $h^*$-polynomial of $\mathcal{P}$. 
As noted in \cite[P. 4]{Avila-Ferroni-Morales-2026b}, a combinatorial interpretation for the $h^*$-polynomial of an arbitrary Pitman–Stanley
polytope was provided by Avila, Ferroni and Morales \cite{Avila-Ferroni-Morales-2026a}. By Remark \ref{rem-PS-operator} we know that ${\mathcal{E}}_{\lambda,[2,3,\ldots,n,1]}(t)$
appears as the Ehrhart $h^*$-polynomial $h^*_{\mathcal{P}}(t)$ for some Pitman–Stanley polytope $\mathcal{P}$. This means that Avila, Ferroni and Morales's work \cite{Avila-Ferroni-Morales-2026a} also 
provides a partial answer to Problem \ref{prob-Lascoux}. Avila, Ferroni and Morales \cite{Avila-Ferroni-Morales-2026b} also proved the following result. 

\begin{thm}[{\cite[Theorem 1.2]{Avila-Ferroni-Morales-2026b}}]
For any Pitman–Stanley polytope $\mathcal{P}$ the Ehrhart $h^*$-polynomial $h^*_{\mathcal{P}}(t)$ has only real zeros. 
\end{thm}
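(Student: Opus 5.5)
The plan is to prove real-rootedness of $h^*_{\mathcal{P}}(t)$ for $\mathcal{P}=\Pi_m(\mathbf{x})$ by induction on $m$, using a refined family of polynomials that satisfies a recursion preserving interlacing (compatibility). Wagner's Theorem \ref{wagner-thm} alone will not suffice: the Ehrhart polynomial of a general Pitman--Stanley polytope is not a single product of linear factors in $j$ as in \eqref{eq-abpol}, but a sum of such products.

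\textbf{Recursion for the lattice-point count.} As in the proof of Proposition \ref{thm_w}, fix the first coordinate of a lattice point of $j\Pi_m(\mathbf{x})$. Since $y_1\le jx_1$ and the remaining constraints only involve the slack $jx_1-y_1$, we get
\begin{align*}
i(\Pi_m(\mathbf{x}),j)=\sum_{s=0}^{jx_1} N_{m-1}\bigl(s+jx_2,\,jx_3,\ldots,jx_m\bigr),
\end{align*}
where $N_{m-1}(\cdot)$ counts lattice points of the lower-dimensional Pitman--Stanley polytope with the indicated (non-dilated) first parameter. This suggests introducing a refined statistic. For each $j$, let $F_{j}(s)$ be the number of lattice points of $j\Pi_{m}(\mathbf{x})$ whose final slack $j(x_1+\cdots+x_m)-(y_1+\cdots+y_m)$ equals $s$. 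Then define the polynomials
\begin{align*}
P_{m,s}(t)=(1-t)^{m+1}\sum_{j\ge0}\#\{\text{points of } j\Pi_m(\mathbf{x}) \text{ with slack } \ge \text{ or } = s\}\,t^j,
\end{align*}
suitably normalized so that $h^*$ is obtained as $\sum_s P_{m,s}(t)$. Passing from $m$ to $m+1$ adds a new coordinate $y_{m+1}$ with $0\le y_{m+1}\le s+jx_{m+1}$, which is exactly the nested-sum structure $\sum_{j_{i+1}=0}^{j_i+j}$ already seen in Section \ref{sec 2}.

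\textbf{Compatibility step.} Translating this summation into generating functions gives linear operators of the shape $f\mapsto ((\alpha n+\beta)t+\gamma)f+\alpha t(1-t)f'$, as in Lemma \ref{lem-recursion} and \eqref{eq_Eulerian recurrence}, together with partial sums over $s$. I would show by induction that $\{P_{m,s}(t)\}_s$ is a \emph{compatible} (pairwise interlacing) family with nonnegative coefficients. The tool is the Haglund--Visontai / Savage--Visontai criterion: if $\{f_s\}$ is a sequence of real-rooted polynomials with positive leading coefficients and $f_s\preceq f_{s'}$ for all $s<s'$, then any nonnegative combination $\sum_{s\le r}f_s$ and the images under these Eulerian-type operators again form such a sequence. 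With that in hand, $h^*_{\mathcal{P}}(t)=\sum_s P_{m,s}(t)$ is real-rooted.

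\textbf{Expected obstacle and fallback.} The main obstacle is the inductive interlacing step. The new parameter $x_{m+1}$ enters through the dilation, as $jx_{m+1}$, and not as a fixed shift. So the operator on generating functions is not a single Eulerian-type operator: it mixes the indices $s$ and the $t$-derivative. The first thing to try is to encode the dependence on $jx_{m+1}$ with the $\mathcal{W}$-transform, writing the contribution of $y_{m+1}$ as $\mathcal{W}$ of a product of a linear polynomial in $j$ with the previous data. One would then apply a compatible-family version of Wagner's Theorem \ref{wagner-thm}, namely the preservation results of Br\"and\'en for Hadamard products with $\mathcal{W}$-real-rooted factors, to each $P_{m,s}$ simultaneously. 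If that fails, the fallback is the route of Avila, Ferroni and Morales: use their combinatorial description of $h^*_{\mathcal{P}}$ \cite{Avila-Ferroni-Morales-2026a} to extract a recursion with an explicit refinement and verify interlacing directly.
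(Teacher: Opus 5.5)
The paper gives no proof of this statement. It quotes it from Avila, Ferroni and Morales and uses it only to conclude, via Remark~\ref{rem-PS-operator}, that $\mathcal{E}_{\lambda,[2,3,\ldots,n,1]}(t)$ is real-rooted. Your proposal therefore has to stand on its own, and as written it is a plan whose decisive steps are missing.

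\textbf{The refined family is unusable as defined.} A compatibility argument needs a different kind of family.
\begin{itemize}
\item With exact slack and the normalization $(1-t)^{m+1}$, the slice count $F_j(s)$ grows only like $j^{m-1}$. Hence each $P_{m,s}(t)$ vanishes at $t=1$. Already for $m=1$ and $\mathbf{x}=(a)$ you get $P_{1,s}(t)=(1-t)t^{\lceil s/a\rceil}$. So $h^*_{\mathcal{P}}=\sum_{s\ge 0}P_{m,s}$ is an infinite sum of polynomials, each with a positive zero and a negative coefficient.
\item With the cumulative version (slack $\ge s$), the sum over $s$ counts each lattice point with multiplicity one plus its slack. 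It does not return $h^*_{\mathcal{P}}$ at all.
\end{itemize}
Criteria of Chudnovsky--Seymour or Haglund--Visontai type require a \emph{finite} family of real-rooted polynomials with nonnegative coefficients, indexed independently of $j$, whose sum is exactly $h^*_{\mathcal{P}}$. Since the slack ranges over $[0,j(x_1+\cdots+x_m)]$, you would first have to compress it to a bounded statistic. In the $m=1$ example, grouping $s$ by its residue mod $a$ gives $1+(a-1)t=1+t+\cdots+t$. You have not identified such a statistic for general $m$. Also, the Haglund--Visontai lemma concerns the specific combinations $t\sum_{i<k}f_i+\sum_{i\ge k}f_i$. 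The claim that images of an interlacing family under Eulerian-type operators again interlace pairwise is something you would have to prove, not quote.

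\textbf{The inductive step is left open, and the suggested tools do not fit it.} In the slack variable, the passage from $m$ to $m+1$ reads
\[
F^{(m+1)}_j(s')=\sum_{s\ge\max(0,\,s'-jx_{m+1})}F^{(m)}_j(s),
\]
a summation whose range depends on $j$. Equivalently,
\[
i\bigl(\Pi_{m+1}(x_1,\ldots,x_{m+1}),j\bigr)=jx_{m+1}\,i\bigl(\Pi_m(x_1,\ldots,x_m),j\bigr)+\sum_s(s+1)F^{(m)}_j(s).
\]
This is not a Hadamard product of the old counting function with a fixed polynomial in $j$. So neither Theorem~\ref{wagner-thm} nor Br\"and\'en's Hadamard-product refinements apply directly. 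Nor is the step a single Eulerian-type operator as in Lemma~\ref{lem-recursion} or \eqref{eq_Eulerian recurrence}, as you note yourself. You write down no operator and verify no interlacing.

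Your fallback is to extract a recursion from Avila, Ferroni and Morales's combinatorial description of $h^*_{\mathcal{P}}$ and check interlacing directly. That simply defers to the cited work.

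Two ingredients are missing:
\begin{itemize}
\item a bounded refinement of the lattice-point count whose parts are nonnegative and sum to $h^*_{\mathcal{P}}$;
\item an explicit recursion for those parts, with a proof that it preserves pairwise interlacing.
\end{itemize}
Without them, nothing is established beyond the product cases such as \eqref{eq-abpol}, which Theorem~\ref{wagner-thm} already handles.
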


This theorem implies that the polynomial ${\mathcal{E}}_{\lambda,[2,3,\ldots,n,1]}(t)$ has only real zeros for any strict partition $\lambda$. 
Note that the real-rootedness of $\PP_n(t)={\mathcal{E}}_{\delta_n,[2,3,\ldots,n,1]}(t)$ is also known to Athanasiadis, Douvropoulos and Kalampogia-Evangelinou \cite[Theorem 3 (a)]{Athanasiadis-Douvropoulos-Kalampogia-Evangelinou-2024}.
It is well known that the Eulerian polynomial $\mathcal{A}_n(t)$ has only real zeros. 
These results in turn prompted the following conjecture.

\begin{conj}
For any partition $\lambda$ and any permutation $\sigma$ the polynomial ${\mathcal{E}}_{\lambda,\sigma}(t)$ has only real zeros. 
\end{conj}

Alexandersson (private communication) has pointed out that the above conjecture was independently stated in terms of $h^*$-polynomials in \cite{Alexandersson-Oguz-2025}. 
A useful tool to prove real-rootedness is the generalized Sturm sequence. A sequence $\{f_m(t)\}_{m\geq0}$ of real-rooted polynomials with nonnegative coefficients is called a generalized Sturm sequence if $f_m(t)\preceq f_{m+1}(t)$ for every $m\geq0$. 
Alexandersson and O\v{g}uz \cite{Alexandersson-Oguz-2025} also proposed the following conjecture.

\begin{conj}[\cite{Alexandersson-Oguz-2025}]\label{conj-AO}
If $\sigma^{(0)}<\sigma^{(1)}<\cdots<\sigma^{(\binom{n}{2})}$ is a saturated chain in the Bruhat order of the symmetric group $\mathfrak{S}_n$, then $\{\E_{\delta_n, \sigma^{(i)}}\}_{0\le i\le \binom{n}{2}}$ is a generalized Sturm sequence.
\end{conj}

Very little progress has been made towards the resolution of Conjecture \ref{conj-AO} so far. Note that $s_1s_2\cdots s_{n-1}=[2,3,\ldots,n,1]$ and $\PP_{n}(t)=\E_{\delta_m, [2,3,\ldots,n,1]}$ for any $m\geq n$. 
In support of Alexandersson and O\v{g}uz's conjecture, we prove the following result. 
 
\begin{thm}\label{thm_parking-sturm}
The sequence $\{\PP_{n}(t)\}_{n\geq 1}$ is a generalized Sturm sequence.
\end{thm}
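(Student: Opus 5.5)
The plan is to work on the side of the coefficient polynomials and then transport interlacing through the transform $\mathcal W$. By Theorem \ref{main-thm-com-in}, Proposition \ref{thm_w} and \eqref{eq-D-expression}, we have
\begin{align*}
\frac{\PP_n(t)}{(1-t)^{n+1}}=\sum_{j\ge 0}q_n(j)t^j,\qquad q_n(x)=\frac{1}{(n+1)!}\prod_{i=0}^{n-1}\bigl((n+1)(x+1)-i\bigr),
\end{align*}
so that $\PP_n(t)=\mathcal W(q_n)(t)$ and $\deg q_n=n$. The zeros of $q_n$ are $x=-1+\frac{i}{n+1}$ for $0\le i\le n-1$, and the zeros of $q_{n+1}$ are $x=-1+\frac{i}{n+2}$ for $0\le i\le n$. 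All of them lie in $[-1,0)$. They also interlace in the required pattern:
\begin{align*}
0\le \tfrac{0}{n+1}\le\tfrac{1}{n+2}\le\tfrac{1}{n+1}\le\tfrac{2}{n+2}\le\cdots\le\tfrac{n-1}{n+1}\le\tfrac{n}{n+2},
\end{align*}
because $\frac{i}{n+2}\le\frac{i}{n+1}\le\frac{i+1}{n+2}$ for $0\le i\le n-1$. Hence $q_n\preceq q_{n+1}$ as polynomials in $x$, and both have all zeros in $[-1,0]$.

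The second step is a preservation statement: the linear map $q\mapsto\mathcal W(q)$ should send such interlacing pairs to interlacing pairs. I would argue this through the Hermite--Kakeya--Obreschkoff theorem. First, we need the known fact (Brändén; also implicit in Wagner's work behind Theorem \ref{wagner-thm}) that $\mathcal W$ maps every polynomial of degree $d$ with all zeros in $[-1,0]$ to a polynomial with only nonpositive real zeros. This follows, for instance, by writing such $q$ as a product of linear factors $x+1-a$ with $0\le a\le 1$, each having $\mathcal W$ equal to $at+1-a$, and applying Theorem \ref{wagner-thm} repeatedly, exactly as in the proof of Theorem \ref{thm-main-realzeros}.

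The degree mismatch must be handled carefully, since $\mathcal W(q_n)$ is normalized with $(1-t)^{n+1}$ and $\mathcal W(q_{n+1})$ with $(1-t)^{n+2}$. To fix this, I would regard $q_n$ as a degree-$\le n+1$ polynomial and use the normalization $(1-t)^{n+2}$, which replaces $\PP_n(t)$ by $(1-t)\PP_n(t)$. The interlacing then has to be established for the pencil $\lambda q_{n+1}+\mu q_n$ over all real $\lambda,\mu$. By the interlacing $q_n\preceq q_{n+1}$ and the classical Obreschkoff argument, every member of this pencil has all real zeros, and, because of the ordering of the zeros above together with the sign behavior at $x=-1$ and $x=0$, they lie in $[-1,0]$ (possibly after restricting $\mu/\lambda$ to the range that does not push a root past $0$). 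Applying $\mathcal W$ then shows that every $\lambda\PP_{n+1}(t)+\mu(1-t)\PP_n(t)$ is real-rooted. The Hermite--Kakeya--Obreschkoff theorem then gives that $(1-t)\PP_n$ and $\PP_{n+1}$ interlace. Removing the root $t=1$, which sits to the right of all the nonpositive zeros, yields $\PP_n\preceq\PP_{n+1}$, with the direction fixed by comparing leading coefficients and degrees ($\deg\PP_n=n-1$).

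The main obstacle is the middle step, namely controlling where the zeros of the whole pencil $\lambda q_{n+1}+\mu q_n$ lie, since real-rootedness of $\mathcal W$ is only guaranteed for inputs with zeros in $[-1,0]$. If this pencil argument fails for some $(\lambda,\mu)$, I would fall back on a more explicit route. This route uses the factorization $q_n(x)=(x+1)r_n(x)$ together with the identity $\sum_j (j+1)r(j)t^j=D\bigl(\sum_j r(j)t^j\bigr)$, which turns the comparison into one between $D$-images. I would then combine this with the recurrence of Lemma \ref{lem-recursion} to produce a direct relation of the form $\PP_{n+1}=a(t)\PP_n+b(t)\PP_n'$ with sign-controlled $a,b$. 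From such a relation, interlacing follows by the standard sign-alternation argument at the zeros of $\PP_n$.
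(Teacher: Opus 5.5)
Your first step is the same as the paper's (the case $c=2$ of Theorem \ref{thm_gc}): $\PP_n=\mathcal W(q_n)$, and the zeros of $q_n$ and $q_{n+1}$ interlace inside $[-1,0]$. The gap is in carrying this interlacing through $\mathcal W$. Your claim that the members of the pencil $\lambda q_{n+1}+\mu q_n$ have their zeros in $[-1,0]$ is false. Already for $n=1$, $q_2+\mu q_1=(x+1)\bigl(\frac{3x+2}{2}+\mu\bigr)$ has the zero $-\frac{2+2\mu}{3}$, which lies below $-1$ for $\mu>\frac12$ and is positive for $\mu<-1$. In general, one zero of $q_{n+1}+\mu q_n$ escapes to $-\infty$ as $\mu\to+\infty$ and to $+\infty$ as $\mu\to-\infty$. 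Outside the $[-1,0]$-rooted class, $\mathcal W$ does not preserve real-rootedness; for example, $\mathcal W\bigl((x+2)^2\bigr)=t^2-3t+4$. So nothing you have stated implies that every $\lambda\PP_{n+1}+\mu(1-t)\PP_n$ is real-rooted. Restricting $\mu/\lambda$ to a range does not help, because Hermite--Kakeya--Obreschkoff needs all real $(\lambda,\mu)$; real-rootedness on a restricted range yields at best a common interlacer, not interlacing. The fallback fails as well. Lemma \ref{lem-recursion} raises $m$ with the scaling parameter fixed, but passing from $\PP_n$ to $\PP_{n+1}$ changes every factor $(n+1)(x+1)-i$ into $(n+2)(x+1)-i$. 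Hence $q_{n+1}$ is not a linear multiple of $q_n$, and no relation $\PP_{n+1}=a\PP_n+b\PP_n'$ comes out of that lemma.

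The missing idea, which is how the paper argues, is to avoid the whole pencil and decompose $q_n$ inside the good class.
\begin{enumerate}
\item Since $q_n\preceq q_{n+1}$ and $q_{n+1}$ has simple zeros $\eta_1<\cdots<\eta_{n+1}$, Lemma \ref{lem_Fisk} gives $q_n=\sum_j\alpha_j\,q_{n+1}/(x-\eta_j)$ with all $\alpha_j\ge 0$.
\item Each $q_{n+1}/(x-\eta_j)$ is $[-1,0]$-rooted, so Lemma \ref{lem_Branden} with $\alpha=\eta_j$ gives $\mathcal T\bigl(q_{n+1}/(x-\eta_j)\bigr)\preceq\mathcal T(q_{n+1})$.
\item Polynomials interlacing a fixed polynomial with simple zeros form a convex cone, so $\mathcal T(q_n)\preceq\mathcal T(q_{n+1})$.
\item The relation \eqref{relation_TW} is an order-preserving change of variable on the relevant range, and it turns this into $\PP_n\preceq\PP_{n+1}$.
\end{enumerate}
If you want to keep your pencil argument instead, you would need two further facts, neither of which is in your proposal. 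First, each pencil member has at most one zero outside $[-1,0]$. Second, $\mathcal T\bigl((x-a)f\bigr)$ is real-rooted for every real $a$ whenever $\mathcal T(f)$ is $[-1,0]$-rooted; this can be proved from \eqref{eq_T_recursion} by a sign count at the zeros of $\mathcal T(f)$.
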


We also have an analogous result for prime parking functions. 

\begin{thm}\label{thm_prime-sturm}
The sequence $\{\PPP_{n}(t)\}_{n\geq 1}$ is a generalized Sturm sequence.
\end{thm}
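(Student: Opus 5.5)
The plan is to move everything to the ``$j$-side'' of the transform $\mathcal{W}$ and check interlacing there, where the zeros are explicit. By Proposition \ref{prop-ppf-operator} and \eqref{eq-D-expression}, for $n\ge 2$ we have
\begin{align*}
\PPP_n(t)=\frac{1}{(n-1)\cdot n!}\,\mathcal{W}(Q_n)(t),\qquad Q_n(j):=\prod_{i=0}^{n-1}\bigl((n-1)(j+1)-i\bigr).
\end{align*}
The positive normalising constants do not affect interlacing, and the case $n=1$ (where $\PPP_1(t)=1$) is handled separately, since a constant interlaces any real-rooted polynomial by convention. So it suffices to show $\mathcal{W}(Q_n)(t)\preceq\mathcal{W}(Q_{n+1})(t)$ for $n\ge 2$.

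\textbf{Step 1: zeros of $Q_n$ and $Q_{n+1}$.} The zeros of $Q_n$ in $j$ are $-1+\frac{i}{n-1}$ for $0\le i\le n-1$, and those of $Q_{n+1}$ are $-1+\frac{i}{n}$ for $0\le i\le n$. All of them lie in $[-1,0]$. For $0\le i\le n-1$ the inequality $\frac{i}{n}\le \frac{i}{n-1}\le\frac{i+1}{n}$ is equivalent to $i\le n-1$. Hence the zeros of $Q_n$ and $Q_{n+1}$ weakly interlace, so $Q_n\preceq Q_{n+1}$ as polynomials in $j$, with coincidences only at the endpoints $-1$ and $0$.

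\textbf{Step 2: transfer interlacing through $\mathcal{W}$.} This is the main obstacle. I would invoke the interlacing-preserving refinement of Wagner's Theorem \ref{wagner-thm}, due to Wagner and Brändén: the linear map $q\mapsto\mathcal{W}(q)$ sends any pair $f\preceq g$ of polynomials whose zeros all lie in $[-1,0]$ to a pair $\mathcal{W}(f)\preceq\mathcal{W}(g)$ with only nonpositive zeros. (Note that $\mathcal{W}(f)$ is defined via $(1-t)^{\deg f+1}$, so the degrees shift consistently.)

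If I want a self-contained route instead, I would argue via Hermite--Kakeya--Obreschkoff. Write $Q_{n+1}=\alpha(j)Q_n+\text{lower}$, or more cleanly use the basis $\{j^{\underline{k}}\}$ in which $\mathcal{W}$ acts diagonally on $\binom{j+k}{k}$. Then check that $\lambda Q_n+\mu Q_{n+1}$ is mapped to a real-rooted polynomial for all real $\lambda,\mu$. The delicate point is that such combinations may have zeros leaving $[-1,0]$. To handle this, I would first factor out the common zeros $j=-1$ and $j=0$, writing $Q_n=j(j+1)R_n$ and $Q_{n+1}=j(j+1)R_{n+1}$. The reduced polynomials $R_n$ and $R_{n+1}$ then strictly interlace inside $(-1,0)$. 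Next I would use that multiplication by $j(j+1)$ corresponds under $\mathcal{W}$ to a real-rootedness-preserving operator of the form $t\,(\text{something in }D)$. This lets me reduce to strictly interlacing polynomials with zeros in the open interval, where the preserver result applies cleanly.

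\textbf{Step 3: conclude.} Combining Steps 1 and 2 gives $\mathcal{W}(Q_n)\preceq\mathcal{W}(Q_{n+1})$, and hence $\PPP_n(t)\preceq\PPP_{n+1}(t)$. Nonnegativity of coefficients is already known, e.g.\ from Lemma \ref{lem-recursion}, since all $v_i=i\le n-1$. The same argument, with $Q_n$ replaced by $\prod_{i=0}^{n-2}((n)(j+1)-i)$ from \eqref{eq-ppn-gnmv-def}, should also give Theorem \ref{thm_parking-sturm}.
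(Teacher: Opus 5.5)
Your Step 1 is exactly the paper's first step. The paper obtains the statement from Theorem \ref{thm_gc} with $c=0$, where your $Q_n$ is its $\mathcal{Q}_{n-1}$, and it checks the same interlacing of the zeros $-1+i/(n-1)$ and $-1+i/n$, with coincidences only at $-1$ and $0$.

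The gap is in Step 2, which carries all the content. The transfer principle you invoke is true, and the paper's argument proves it. But it is not Theorem \ref{wagner-thm}, which concerns products and says nothing about interlacing. You give no precise source for it, and it is precisely what the paper has to establish.

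Your self-contained fallback does not work. Hermite--Kakeya--Obreschkoff would require showing that the image of $\lambda Q_n+\mu Q_{n+1}$ is real-rooted for all real $\lambda,\mu$. However, for every real $x$ some such combination vanishes at $x$. So these inputs are not $[-1,0]$-rooted, and the $[-1,0]$-preserver property gives no information about them. The transform does not preserve real-rootedness in general; for example $\mathcal{T}((t-1)(t-2))=2t^2-2t+2$. Stripping the common factor $j(j+1)$ changes nothing, since $\lambda R_n+\mu R_{n+1}$ can still vanish anywhere on $\mathbb{R}$. Moreover, $q\mapsto\mathcal{W}(q)$ is not linear across degrees, because of the normalisation $(1-t)^{\deg q+1}$. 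For $\mu\neq 0$ the image of $\lambda Q_n+\mu Q_{n+1}$ is $\lambda(1-t)\mathcal{W}(Q_n)+\mu\mathcal{W}(Q_{n+1})$, so HKO on the $\mathcal{W}$-side would concern the wrong pair.

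The missing idea is to use only \emph{nonnegative} combinations, and to work with the genuinely linear operator $\mathcal{T}$. The paper's argument runs as follows.
\begin{itemize}
\item Since $Q_{n+1}$ has simple zeros $\eta_k$ and $Q_n\preceq Q_{n+1}$, Fisk's Lemma \ref{lem_Fisk} gives $Q_n=\sum_k\alpha_k\,Q_{n+1}/(t-\eta_k)$ with all $\alpha_k\ge 0$.
\item Each $Q_{n+1}/(t-\eta_k)$ is a product of linear factors with zeros in $[-1,0]$. Iterating Br\"and\'en's Lemma \ref{lem_Branden} shows its $\mathcal{T}$-image is $[-1,0]$-rooted.
\item One more application of that lemma, with $\alpha=\eta_k$, gives $\mathcal{T}(Q_{n+1}/(t-\eta_k))\preceq\mathcal{T}(Q_{n+1})$.
\item Polynomials with positive leading coefficients that interlace a fixed polynomial form a convex cone. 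Hence $\mathcal{T}(Q_n)\preceq\mathcal{T}(Q_{n+1})$.
\item Finally, \eqref{relation_TW} converts this into $\mathcal{W}(Q_n)\preceq\mathcal{W}(Q_{n+1})$. The common zero of the $\mathcal{T}$-images at $-1$ just records the degree drop of $\mathcal{W}$.
\end{itemize}
With this inserted, your proof coincides with the paper's.

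A minor point on $n=1$: the paper's convention concerns the zero polynomial. A nonzero constant satisfies $1\preceq a$ only when $a$ has at most one zero. This is fine here, since $\PPP_2(t)=t$.
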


Before proving Theorems \ref{thm_parking-sturm} and \ref{thm_prime-sturm}, let us first recall some related results. 
Let $\mathcal T:\mathbb{R}[t]\rightarrow \mathbb{R}[t]$ be the linear operator defined by
\begin{align*}
\mathcal T\left(\binom{t}{i}\right)=t^i,
\end{align*}
for all $i\in \mathbb{N}$. 
As shown in \cite{Branden-2006},  for any $f\in\mathbb R[t]$ the polynomials $\mathcal{T}(f)(t)$ and $\mathcal{W}(f)(t)$ are related by
\begin{align}\label{relation_TW}
\mathcal T(f)(t)=(1+t)^{\deg(f(t))}\mathcal{W}(f)\left(\frac{t}{1+t}\right),
\end{align}
and for any $\alpha\in\mathbb{R}$ the following recursion holds
\begin{align}\label{eq_T_recursion}
\mathcal{T}\bigl((t-\alpha)f\bigr)(t) = (t-\alpha)\mathcal{T}(f)(t) + t(1+t)\mathcal{T}(f)'(t).
\end{align}
It is easy to see that $\mathcal{T}(f)(t)$ is $[-1,0]$-rooted if and only if $\mathcal{W}(f)(t)$ is $(-\infty,0]$-rooted.
Br\"and\'en \cite{Branden-2006} established a connection between the real-rootedness of $\mathcal{T}(f)(t)$ and that of
$\mathcal{T}\bigl((t-\alpha)f\bigr)(t)$, which plays an important role for proving Theorems \ref{thm_parking-sturm} and \ref{thm_prime-sturm}.

\begin{lem}[{\cite[Lemma 4.4]{Branden-2006}}]\label{lem_Branden}
Suppose that $f$ is a polynomial such that $\mathcal T(f)$ is $[-1,0]$-rooted. 
Then for any $\alpha\in[-1,0]$ the polynomial $\mathcal T((t-\alpha)f)$ is $[-1,0]$-rooted and moreover $\mathcal T(f)\preceq\mathcal T((t-\alpha)f)$. 
If $\mathcal{T}(f)$ in addition only has real and simple zeros, then so does $\mathcal{T}((t-\alpha)f)$.
\end{lem}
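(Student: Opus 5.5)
The plan is to work directly with the recursion \eqref{eq_T_recursion}. Set $g=\mathcal T(f)$ and $h=\mathcal T((t-\alpha)f)$, so that
\begin{align*}
h(t)=(t-\alpha)g(t)+t(1+t)g'(t).
\end{align*}
The point is that on $[-1,0]$ the weight $t(1+t)$ is $\le 0$. This allows a classical sign-alternation (Hermite--Biehler/Obreschkoff style) argument at the zeros of $g$.

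\emph{Setup and multiple zeros.} Assume $g$ has positive leading coefficient and degree $d$; a negative leading coefficient just changes all signs. Then $h$ has degree $d+1$ with leading coefficient $(1+d)$ times that of $g$, so it is positive. Write the distinct zeros of $g$ as $-1\le r_1<\cdots<r_k\le 0$ with multiplicities $m_i$.
\begin{itemize}
\item If $r_i\in(-1,0)$ has multiplicity $m_i$, then it is a zero of $h$ of multiplicity exactly $m_i-1$. Indeed, the logarithmic derivative gives
\begin{align*}
h/g=(t-\alpha)+t(1+t)\sum_j m_j/(t-r_j),
\end{align*}
and the term $t(1+t)m_i/(t-r_i)$ has a simple pole at $r_i$ with nonzero residue.
\item If $r_i\in\{-1,0\}$, the factor $t(1+t)$ kills that pole, so $r_i$ is a zero of $h$ of multiplicity at least $m_i$.
\end{itemize}
So it suffices to locate the remaining $k+1$ zeros of $h$ (adjusted by the endpoint cases), one in each gap and near the ends.

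\emph{Signs at interior zeros.} At a simple zero $r\in(-1,0)$ of the square-free part, we have $h(r)=r(1+r)g'(r)$ up to a positive factor. Since $r(1+r)<0$, the sign of $h(r)$ is opposite to the sign with which $g$ crosses at $r$. These crossing signs alternate between consecutive distinct zeros (after dividing out the common factor $\prod (t-r_i)^{m_i-1}$, which has constant sign between zeros once multiplicities are accounted for). Hence $h$ has a zero in every open gap $(r_i,r_{i+1})$.

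\emph{Endpoints.} We have
\begin{align*}
h(0)=-\alpha g(0)\quad\text{and}\quad h(-1)=(-1-\alpha)g(-1).
\end{align*}
With $\alpha\in[-1,0]$ the multipliers satisfy $-\alpha\ge0$ and $-1-\alpha\le0$.
\begin{itemize}
\item At $0$: $g(0)$ has the sign $g$ takes to the right of $r_k$, which is positive. At $r_k$, $h$ has the opposite sign to the right-crossing sign, so it is negative. Thus $h$ has a zero in $[r_k,0]$.
\item At $-1$: $h(-1)$ has sign opposite to $g(-1)$. At $r_1$, $h$ has the same sign as $g(-1)$ (opposite to the crossing direction). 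Thus $h$ has a zero in $[-1,r_1]$.
\end{itemize}
This accounts for all $d+1$ zeros, so $h$ is $[-1,0]$-rooted. Since the zeros of $h$ lie one in each closed interval between consecutive zeros of $g$ and outside the extreme ones, this configuration is exactly $g\preceq h$.

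\emph{Main obstacle.} The hard part is the degenerate situations: zeros of $g$ at $-1$ or $0$, $\alpha$ equal to $0$ or $-1$ (so that $h(0)$ or $h(-1)$ vanishes), and multiple zeros. In these cases the strict sign arguments collapse. I would handle them first by a perturbation argument: approximate $g$ by polynomials with simple zeros in $(-1,0)$ and $\alpha$ by interior values, then use that $[-1,0]$-rootedness and interlacing are closed conditions and that zeros depend continuously on coefficients.

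\emph{Simple zeros.} When $g$ has only simple zeros, all the sign evaluations above are strict. The only exception is an endpoint zero of $g$ at $0$ or $-1$, which is then a simple zero of $h$ as well, since $h'$ there equals $(1+2t)g'\neq0$ up to the $\alpha$ term; this is checked directly. The $d+1$ zeros of $h$ then lie in disjoint intervals and are simple.
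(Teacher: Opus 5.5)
Your proposal is correct. Note that the paper does not prove this lemma: it quotes it from Br\"and\'en \cite{Branden-2006} and uses it only as a black box in the proof of Theorem \ref{thm_gc}. So your argument stands on its own as a self-contained derivation from the recursion \eqref{eq_T_recursion}, where the paper simply relies on the reference.

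The core of your argument works. In the generic case, where $g=\mathcal T(f)$ has simple zeros in $(-1,0)$ and $\alpha\in(-1,0)$, you use three facts:
\begin{itemize}
\item $h(r)=r(1+r)g'(r)$ with $r(1+r)<0$ at each zero $r$ of $g$, so the signs of $h$ alternate across the zeros of $g$;
\item $h(0)=-\alpha g(0)$ and $h(-1)=-(1+\alpha)g(-1)$, which produce the two outer zeros;
\item these give $d+1$ zeros in disjoint intervals, hence all of them.
\end{itemize}
Your limiting argument then extends this to all cases. It is valid because $\deg h=d+1$ with leading coefficient $(d+1)$ times that of $g$, so the ordered zeros vary continuously, and the weak inequalities defining $\preceq$ pass to the limit. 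This gives $[-1,0]$-rootedness and $\mathcal T(f)\preceq\mathcal T((t-\alpha)f)$ in full generality. Your separate multiplicity analysis is therefore correct but not actually needed.

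The one place to tighten is the simple-zeros clause, which perturbation cannot deliver. Your disjoint-interval count is not literally available when $g$ vanishes at $0$ or $-1$. In that case you still need a zero of $h$ in the gap between the endpoint zero of $g$ and its neighbour. Knowing that the endpoint is a simple zero of $h$ does not supply it; a local sign comparison would.

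The cleanest fix is to use the interlacing you have already established. If $h$ had a repeated zero, the chain $\alpha_{d+1}\le\beta_d\le\alpha_d\le\cdots\le\beta_1\le\alpha_1$ would force that zero to coincide with a zero of $g$. At an interior simple zero $r$ of $g$ this contradicts $h(r)=r(1+r)g'(r)\neq0$. At an endpoint it contradicts $h'(0)=(1-\alpha)g'(0)\neq0$ and $h'(-1)=-(2+\alpha)g'(-1)\neq0$.

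Your phrase ``$(1+2t)g'$ up to the $\alpha$ term'' is slightly off at $t=-1$. There the term $(t-\alpha)g'$ contributes $-(1+\alpha)g'(-1)$, not only an $\alpha$-term, but the conclusion is unaffected. Your claim that ``all sign evaluations are strict'' also fails when $\alpha\in\{-1,0\}$, though the half-open end intervals still make the count work.

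Finally, mention the trivial cases $g$ constant, where $h=c(t-\alpha)$, and $g\equiv 0$.
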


We also need a result of Fisk \cite{Fisk-2006}.

\begin{lem}[{\cite[Remark 1.21]{Fisk-2006}}]\label{lem_Fisk}
Suppose that $q(t)$ is a polynomial of degree $n$ with positive leading coefficient and simple real zeros $\{\theta_1,\ldots,\theta_n\}$. 
If $f(t)$ is a polynomial of degree $n-1$ with positive leading coefficient and we write
\begin{align*}
f(t)=\sum_{j=1}^n\frac{\alpha_j q(t)}{t-\theta_j},
\end{align*}
then $f(t)\preceq q(t)$ if and only if $\alpha_j\geq0$ for all $1\leq j\leq n$.
\end{lem}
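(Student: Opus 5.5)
The plan is to work directly with the partial fraction coefficients. Since the zeros of $q$ are simple, the coefficients in the expansion are determined by evaluating at the poles:
\begin{align*}
\alpha_j=\frac{f(\theta_j)}{q'(\theta_j)}, \qquad 1\le j\le n.
\end{align*}
Order the zeros as $\theta_n<\cdots<\theta_2<\theta_1$. Because $q$ has positive leading coefficient and simple zeros, the sign of $q'(\theta_j)$ is $(-1)^{j-1}$. Hence $\alpha_j\ge0$ for all $j$ is equivalent to the sign condition $(-1)^{j-1}f(\theta_j)\ge 0$ for all $j$. Both directions of the lemma will be reduced to this sign condition.

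\emph{($\Rightarrow$).} Assume $f\preceq q$. Let the zeros of $f$ be $\beta_{n-1}\le\cdots\le\beta_1$, so that $\theta_n\le\beta_{n-1}\le\theta_{n-1}\le\cdots\le\beta_1\le\theta_1$. For each $j$, exactly $j-1$ zeros of $f$ lie at or above $\theta_j$, counting any coincidences $\beta_i=\theta_j$. Since $f$ has positive leading coefficient, $f(\theta_j)$ is either $0$ or has sign $(-1)^{j-1}$. This gives $\alpha_j\ge0$.

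\emph{($\Leftarrow$).} Assume all $\alpha_j\ge0$. Comparing leading coefficients gives $\mathrm{lc}(f)=\mathrm{lc}(q)\sum_j\alpha_j$, so the $\alpha_j$ are not all zero. Let $J=\{j:\alpha_j>0\}$ and $k=|J|\ge1$. I will count the zeros of $f$ in two groups.
\begin{itemize}
\item For $j\notin J$ we have $f(\theta_j)=\alpha_j q'(\theta_j)=0$, so $\theta_j$ is a zero of $f$. This gives $n-k$ zeros.
\item Consider the rational function $f/q=\sum_{j\in J}\alpha_j/(t-\theta_j)$. On the open interval between two consecutive indices $j<j'$ of $J$, it tends to $+\infty$ as $t\to\theta_{j'}^+$ and to $-\infty$ as $t\to\theta_j^-$. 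By the intermediate value theorem, each of these $k-1$ intervals contains a zero of $f$. These zeros are not zeros of $q$, so they are distinct from the first group.
\end{itemize}
Together this gives $n-k+k-1=n-1=\deg f$ zeros, so we have found all of them and each is simple.

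It remains to check the interlacing order. The zeros of the second kind lie strictly between consecutive elements of $\{\theta_j: j\in J\}$. The zeros of the first kind coincide with the $\theta_j$ for $j\notin J$. Listing all $n-1$ zeros in decreasing order and comparing with $\theta_n<\cdots<\theta_1$ should give $\theta_{i+1}\le\beta_i\le\theta_i$ for each $i$.

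The main obstacle is this final bookkeeping when several consecutive $\alpha_j$ vanish. In that case a block of zeros of the first kind sits between two elements of $J$, together with one zero of the second kind. To handle it, I would track, for each $i$, how many zeros of $f$ lie in $[\theta_i,\infty)$. The claim to verify is that this count is exactly $i-1$ when $i\in J$, and lies between $i-1$ and $i$ otherwise. From this the inequalities $\theta_{i+1}\le\beta_i\le\theta_i$ follow.
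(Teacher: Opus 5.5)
The paper gives no proof of this lemma (it is quoted from Fisk), so your argument has to stand on its own. Your reduction $\alpha_j=f(\theta_j)/q'(\theta_j)$ with $\operatorname{sgn}q'(\theta_j)=(-1)^{j-1}$ is correct. The ($\Rightarrow$) direction is also correct, but one sentence needs fixing. ``Exactly $j-1$ zeros of $f$ lie at or above $\theta_j$'' is false when $\beta_j=\theta_j$. What you need is: if $f(\theta_j)\neq0$, then no $\beta_i$ equals $\theta_j$, so exactly $j-1$ zeros lie strictly above $\theta_j$.

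\textbf{The gap in ($\Leftarrow$).} You claim the intermediate-value zeros ``are not zeros of $q$, so they are distinct from the first group.'' This is false. The open interval between $\theta_{j'}$ and $\theta_j$, for consecutive $j<j'$ in $J$, can contain some $\theta_i$ with $i\notin J$. The zero of $R(t)=\sum_{j\in J}\alpha_j/(t-\theta_j)$ can land exactly on it.

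Concretely, take $q=t^3-t$ and $f=t^2$. Then $(\theta_1,\theta_2,\theta_3)=(1,0,-1)$ and $(\alpha_1,\alpha_2,\alpha_3)=(\tfrac12,0,\tfrac12)$. Here $R(t)=t/(t^2-1)$, so the intermediate-value zero is $\theta_2=0$, which is a double zero of $f$. So ``each is simple'' is wrong. The bookkeeping you defer also breaks if you count distinct zeros: only one distinct zero lies in $[\theta_3,\infty)$, not $2$. The lemma itself is fine here, since $t^2\preceq t^3-t$ holds under the paper's non-strict definition; only your proof fails.

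\textbf{Repair by counting with multiplicity.} Set $q_J=\mathrm{lc}(q)\prod_{j\in J}(t-\theta_j)$ and $g=q_J R$, so that $f=\prod_{i\notin J}(t-\theta_i)\cdot g$.
\begin{itemize}
\item $g$ is a polynomial of degree $k-1$ with $g(\theta_j)=\alpha_j q_J'(\theta_j)\neq0$ for $j\in J$.
\item Since $q_J\neq0$ on each of the $k-1$ open intervals, your intermediate-value argument gives exactly one simple zero of $g$ in each of them.
\item The zeros of $f$, with multiplicity, are then $\{\theta_i:i\notin J\}$ together with the zeros of $g$.
\end{itemize}
With this, your count claims hold with multiplicity: $i-1$ for $i\in J$, and $i-1$ or $i$ for $i\notin J$. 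For $i\notin J$ you still need that $\theta_i$ is itself a zero of $f$, to conclude that at most $i-1$ zeros lie strictly above $\theta_i$.

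\textbf{A shorter route.} Put $f_\varepsilon=f+\varepsilon q'=\sum_j(\alpha_j+\varepsilon)\,q/(t-\theta_j)$. All its coefficients are positive, so your intermediate-value argument places one zero in each $(\theta_{j+1},\theta_j)$. Now let $\varepsilon\to0^+$. The leading coefficient stays positive, so the degree stays $n-1$ and the ordered zeros converge. This gives $\theta_{j+1}\le\beta_j\le\theta_j$ and removes the bookkeeping entirely.
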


By \eqref{eq-ppn-gnmv-def} and \eqref{eq-ppn-gnmv-def-prime}, Theorems \ref{thm_parking-sturm} and \ref{thm_prime-sturm} can be deduced from the following more general result. 

\begin{thm}\label{thm_gc}
Fix a nonnegative integer $c$ and let $\mathcal{G}_n(t)=g^{(0,1,\ldots,n-c)}_{n,n-c}(t)$ as defined by \eqref{eq:defi-fnk(x)} for any $n\geq c+1$. 
Then $\{\mathcal{G}_n(t)\}_{n\ge (c+1)}$ is a generalized Sturm sequence.
\end{thm}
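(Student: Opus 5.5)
Write $m=n-c$. By \eqref{eq-D-expression}, $\mathcal{G}_n(t)=\mathcal{W}(Q_n)(t)$ with
\[
Q_n(x)=\prod_{i=0}^{m}\bigl(n(x+1)-i\bigr)=n^{m+1}\prod_{i=0}^{m}\Bigl(x-\bigl(\tfrac{i}{n}-1\bigr)\Bigr).
\]
So $Q_n$ has the simple zeros $\theta^{(n)}_i=i/n-1\in[-1,0]$ for $0\le i\le n-c$. Multiplying by positive constants does not affect interlacing, so I work with the monic $F_n(x)=\prod_{i=0}^{n-c}(x+1-i/n)$. The plan has three steps: compare the zeros of $F_n$ and $F_{n+1}$, transport the resulting interlacing through the operator $\mathcal T$ using Lemma \ref{lem_Branden} and Lemma \ref{lem_Fisk}, and then pass from $\mathcal T$ back to $\mathcal W$ via \eqref{relation_TW}.

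\emph{Step 1: interlacing of the zeros.} The zeros of $F_{n+1}$ are $i/(n+1)-1$ for $0\le i\le n+1-c$. For $1\le i\le n-c$ we have
\[
\frac{i}{n+1}\le \frac{i}{n}\le \frac{i+1}{n+1},
\]
because the second inequality is equivalent to $i\le n$. Both polynomials also vanish at $-1$. Hence $F_n\preceq F_{n+1}$, and every zero is simple and lies in $[-1,0]$. By Lemma \ref{lem_Fisk} applied with $q=F_{n+1}$, we can therefore write
\[
F_n=\sum_j \beta_j\,\frac{F_{n+1}}{x-\theta_j}, \qquad \beta_j\ge 0,
\]
where $\theta_j$ runs over the zeros of $F_{n+1}$.

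\emph{Step 2: applying $\mathcal T$.} By linearity, $\mathcal T(F_n)=\sum_j\beta_j\,\mathcal T(g_j)$ with $g_j=F_{n+1}/(x-\theta_j)$. Each $g_j$ is a product of linear factors $x-\alpha$ with $\alpha\in[-1,0]$. Starting from $\mathcal T(1)=1$ and applying Lemma \ref{lem_Branden} once per factor, I get that $\mathcal T(g_j)$ is $[-1,0]$-rooted with simple zeros. One more application gives $\mathcal T(g_j)\preceq \mathcal T((x-\theta_j)g_j)=\mathcal T(F_{n+1})$, and likewise $\mathcal T(F_{n+1})$ has simple zeros. Lemma \ref{lem_Fisk} then says that each $\mathcal T(g_j)$ has nonnegative partial-fraction coefficients with respect to $\mathcal T(F_{n+1})$. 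A nonnegative combination of such polynomials again has nonnegative coefficients, and it has positive leading coefficient of the right degree because the $\mathcal T(g_j)$ all have degree $n-c+1$ with positive leading coefficient. So the converse direction of Lemma \ref{lem_Fisk} gives $\mathcal T(F_n)\preceq\mathcal T(F_{n+1})$.

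\emph{Step 3: back to $\mathcal W$.} I would use \eqref{relation_TW}, $\mathcal T(f)(t)=(1+t)^{\deg f}\mathcal W(f)\bigl(t/(1+t)\bigr)$. Since $F_n$ vanishes at $-1$, the polynomial $\mathcal T(F_n)$ has a zero at $t=-1$. This accounts for $\deg \mathcal W(F_n)\le n-c$, and one checks that the remaining zeros of $\mathcal T(F_n)$ lie in $(-1,0]$. The map $t\mapsto t/(1+t)$ is increasing from $(-1,\infty)$ onto $(-\infty,1)$. Removing the common zero at $-1$ from both polynomials preserves $\preceq$, since it is the smallest zero of each. Transporting the remaining zeros through this map then turns $\mathcal T(F_n)\preceq\mathcal T(F_{n+1})$ into $\mathcal W(F_n)\preceq\mathcal W(F_{n+1})$, that is, $\mathcal G_n\preceq\mathcal G_{n+1}$. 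Real-rootedness and nonnegativity of the coefficients of each $\mathcal G_n$ already follow from Theorem \ref{thm-main-realzeros} and Lemma \ref{lem-recursion}.

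I expect the main obstacle to be this last bookkeeping step. One must verify carefully that the zero at $-1$ of $\mathcal T$ is exactly the one accounting for the degree drop of $\mathcal W$, possibly with multiplicity, and that no zeros escape to infinity under the Möbius map. The small base cases, such as $n=c+1$ where $\mathcal G_n$ may be a constant, also need to be checked against the convention $0\preceq f$ and the degree condition in the definition of $\preceq$.
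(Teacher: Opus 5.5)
Your proposal is correct and follows the paper's proof essentially step for step. The paper also establishes the zero interlacing $\mathcal{Q}_n\preceq\mathcal{Q}_{n+1}$, expands via Fisk's Lemma~\ref{lem_Fisk}, uses Br\"and\'en's Lemma~\ref{lem_Branden} to get $\mathcal{T}(\mathcal{Q}_{n+1}/(t-\eta_j))\preceq\mathcal{T}(\mathcal{Q}_{n+1})$, and returns to $\mathcal{W}$ via \eqref{relation_TW}; you additionally spell out the cone argument and the bookkeeping at $t=-1$ that the paper leaves implicit. The worry in your last paragraph is already settled by your Step 2: $\mathcal{T}(F_n)$ has simple zeros, so $-1$ is a simple zero, which gives $\deg\mathcal{G}_n=n-c\ge 1$, puts all remaining zeros in $(-1,0]$, and means no term of the sequence is constant.
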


\begin{proof}
As in the proof of Theorem \ref{thm-main-realzeros}, for each $0\le i\le n-c$ let $q_i(t):=nt+n-i$. Set $\mathcal{Q}_n(t)=\prod_{i=0}^{n-c}q_i(t)$ and hence
$\mathcal{G}_n(t)=\mathcal W\left(\mathcal{Q}_n\right)(t)$.
Put $d=n-c+1$. Let $\theta_1<\cdots<\theta_{d}$ and $\eta_1<\cdots<\eta_{d+1}$ be the zeros of  $\mathcal{Q}_n(t)$ and $\mathcal{Q}_{n+1}(t)$, respectively. 
It is easy to verify that 
\begin{align*}
-1=\eta_1=\theta_1<\eta_2<\theta_2<\cdots<\eta_{d}<\theta_{d}\leq\eta_{d+1}\le 0.
\end{align*}
Hence, $\mathcal{Q}_n(t)\preceq \mathcal{Q}_{n+1}(t)$.
By Lemma~\ref{lem_Fisk}, there exit nonnegative numbers $\alpha_1,\alpha_2,\ldots,\alpha_{d+1}$ such that
\begin{align}\label{eq_pc}
\mathcal{Q}_n(t)=\sum_{j=1}^{d+1}\alpha_j\frac{\mathcal{Q}_{n+1}(t)}{t-\eta_j}.
\end{align}
By Lemma \ref{lem_Branden}, for each $1\leq j\leq d+1$ we have 
$\mathcal{T}\left(\frac{\mathcal{Q}_{n+1}(t)}{t-\eta_j}\right)\preceq \mathcal{T}(\mathcal{Q}_{n+1})(t)$, and hence $\mathcal{T}(\mathcal{Q}_{n})(t)\preceq \mathcal{T}(\mathcal{Q}_{n+1})(t)$.
Together with \eqref{relation_TW} this implies that $\mathcal{W}(\mathcal{Q}_{n})(t)\preceq \mathcal{W}(\mathcal{Q}_{n+1})(t)$, or equivalently $\mathcal{G}_n(t)\preceq \mathcal{G}_{n+1}(t)$, as desired.
\end{proof}

\noindent \textbf{Acknowledgements.} 
We are grateful to Per Alexandersson for some helpful discussions on key polynomials. 
Alice Gao is supported by the National Science Foundation of China (No.11801447) and the National Science Foundation for Post-doctoral Scientists
of China (No.2020M683544). 
Xin-Bei Liu is supported by the Fundamental Research Funds for the Central Universities.
Arthur Yang is supported by the National Science Foundation of China (No.\ 12325111). 


\end{document}